\newtheorem{definition}{Definition}
\newtheorem{lemma}{Lemma}[section]
\newtheorem{theorem}[lemma]{Theorem}
\newtheorem{claim*}{Claim}
\numberwithin{equation}{section}
\newcommand{\Cox}{\operatorname{Cox}}
\newcommand{\Pic}{\operatorname{Pic}}
\newcommand{\Tor}{\operatorname{Tor}}
\newcommand{\Osh}{{\mathcal O}}
\newcommand{\pp}{\mathbb{P}}
\newcommand{\zz}{\mathbb{Z}}
\title{\bf Picard-graded Betti numbers and the defining ideals of Cox rings}
\author{Antonio Laface, Mauricio Velasco}
\begin{document}



\maketitle

\section{Introduction}

When studying the geometry of a projective variety $X$ it is often useful to consider its many projective embeddings. In exchange for this flexibility we are left without a natural choice for what ``The homogeneous coordinate ring" of a projective variety should be. Hu and Keel proposed in~\cite{HK}  the following candidate (inspired by work of Cox on toric varieties~\cite{COX}),
\begin{definition} Let $X$ be a smooth projective variety with torsion-free Picard group and let $D_1,\ldots, D_r$ be effective divisors whose classes are a basis of $\Pic(X)$. The {\em Cox ring} of $X$ with respect to this basis is:
\[
\Cox(X) := \bigoplus_{(m_1,\ldots,m_r)\in\zz^r}H^0(X,m_1D_1+\dots+m_rD_r).
\]
with multiplication induced by the multiplication of functions in $k(X)$.
\end{definition}
Different choices of basis yield (non-canonically) isomorphic Cox rings and any of them is a ``total" coordinate ring for $X$ in the sense that,
\begin{enumerate}
\item{The homogeneous coordinate rings $\bigoplus_{s=0}^\infty H^0(X,sD)$ of all images of $X$ via complete linear systems $\phi_D:X\rightarrow \mathbb{P}(H^0(X,D))$ are subalgebras of $\Cox(X)$.}
\item{If the Cox ring is a finitely generated $k$-algebra then $X$ can be obtained as a quotient of an open set of $\rm{ Spec}(\rm{Cox}(X))$ by the action of a torus. In this sense the points of  $\rm{ Spec}(\rm{Cox}(X))$ can be thought of as homogeneous coordinates for the points in $X$.}
\end{enumerate}

If $\Cox(X)$ is a finitely generated $k$-algebra, the birational geometry of $X$ is especially well structured: the nef and effective cones are polyhedral and there are finitely many small modifications of $X$ satisfying certain mild restrictions (see Proposition 1.11 in ~\cite{HK} for precise statements). The  varieties $X$ with finitely generated Cox rings are called Mori Dream spaces and have been the focus of much interest (see~\cite{BP},\cite{CT},\cite{C},\cite{BCHM}).

When $X$ is a Mori Dream Space, its Cox ring admits a presentation $\Cox(X)\cong k[x_1,\dots, x_n]/I$. Not much is known about the generators of the ideals $I$ in general although some work has been done on specific classes of varieties (see~\cite{BP},\cite{STV},\cite{SS}). The purpose of this paper is to introduce a tool to study the $\Pic(X)$-degrees of the generators of the ideal $I$. 

More precisely, we introduce complexes of vector spaces whose homology determines the structure of the minimal free resolution of $\Cox(X)$ over the polynomial ring and show how the homology of these complexes can be studied by purely geometric methods. As an application of these techniques we give a simple new proof of a characterization of the Cox rings of Del Pezzo surfaces (of degree $>1$) conjectured by Batyrev and Popov in~\cite{BP} and shown by Serganova and Skorobogatov in~\cite{SS}.

\section{A geometry of syzygies}
\label{sec:method}
Let $X$ be a smooth projective variety with $\Pic(X)\cong\mathbb{Z}^r$ and fix  a collection of effective divisors $D_1,\dots, D_r$ whose classes form a basis of $\Pic(X)$. With this choice of basis we will think of divisors on $X$ as expressions $m_1D_1+\dots+m_rD_r$ with $m_i\in \mathbb{Z}$ (note that every linear equivalence class contains exactly one divisor of this form).

If $\Cox(X)$ is a finitely generated $k$-algebra, we will fix a presentation
\[
k[x_1,\dots, x_n]/I\cong\Cox(X)
\]
where
\begin{enumerate}
\item{The images of the variables $x_i$ are irreducible and homogeneous elements of $\Cox(X)$ (we will always denote a variable and it's image with the same symbol).}
\item{The ring $R=k[x_1,\dots, x_n]$ is a $\Pic(X)$-graded polynomial ring.  The grading is obtained by assigning to each variable $x_i$ the Picard degree $C_i\in\mathbb{Z}^r$ of it's image in $\Cox(X)$.}
\item{The ideal $I$ is $\Pic(X)$-homogeneous and prime.}
\end{enumerate}
\begin{lemma}\label{lem:positivity} The $\Pic(X)$-graded polynomial ring $R=k[x_1,\dots , x_n]$ is positively graded.
\end{lemma}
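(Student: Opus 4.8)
The plan is to show that the monoid $S=\langle C_{1},\dots,C_{n}\rangle\subseteq\zz^{r}$ generated by the degrees of the variables is pointed, i.e.\ $S\cap(-S)=\{0\}$: this is exactly what ``positively graded'' asks for, and it is what makes $\bigoplus_{0\ne d\in S}R_{d}$ the unique maximal homogeneous ideal, forces $R_{0}=k$, and keeps graded Nakayama --- hence the whole theory of minimal free resolutions over $R$ --- available.

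The one geometric input I need is that each $C_{i}$ is the class of a nonzero effective divisor on $X$, and this is immediate from the setup. The image of $x_{i}$ in $\Cox(X)$ is irreducible, hence a nonzero non-unit, whereas $\Cox(X)_{0}=H^{0}(X,\Osh_{X})=k$ consists only of units; therefore $C_{i}\ne 0$, and any nonzero element of $\Cox(X)_{C_{i}}=H^{0}(X,\,m_{1}D_{1}+\dots+m_{r}D_{r})$ --- with $m_{1}D_{1}+\dots+m_{r}D_{r}$ the representative of the class $C_{i}$ --- exhibits $C_{i}$ as an effective class.

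To conclude, I would observe that the set of effective classes is a submonoid of $\Pic(X)$ (the product of two nonzero sections is a nonzero section of the tensor-product line bundle) and that this submonoid is pointed, since the only effective divisor linearly equivalent to $0$ on the projective variety $X$ is $0$ (again because $H^{0}(X,\Osh_{X})=k$); as $S$ is contained in this submonoid, $S$ is pointed. Equivalently, one can produce the defining inequality directly: fixing an ample divisor $H$ and setting $n=\dim X$, the linear functional $\alpha\mapsto H^{n-1}\cdot\alpha$ on $\Pic(X)\otimes_{\zz}\mathbb{R}$ is strictly positive on every nonzero effective class, hence strictly positive on each $C_{i}$, so the grading is positive. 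I do not expect a real obstacle here; the only point requiring a little care is unwinding the definition of ``positively graded'' into the pointedness of $S$, after which everything is forced by the irreducibility of the $x_{i}$.
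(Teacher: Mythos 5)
Your proof is correct, and your second variant (the linear functional $\alpha\mapsto H^{n-1}\cdot\alpha$ induced by an ample divisor $H$, which is strictly positive on the nonzero effective classes $C_i$) is exactly the one-line argument the paper gives. Your primary route --- pointedness of the monoid generated by the $C_i$ via the pointedness of the effective monoid, using only $H^0(X,\Osh_X)=k$ and the irreducibility of the $x_i$ --- is a slightly more elementary variant that avoids intersection theory, and both are fine.
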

\begin{proof} Assume $\dim(X)=n$ and let $H$ be an ample divisor on $X$. The intersection number ${\rm Deg}(x_i)=H^{n-1}\cdot [x_i]>0$ extends to a positive $\mathbb{Z}$-grading on $R$ which is coarser than the $\Pic(X)$-grading. 
\end{proof}
Since $R$ is positively graded, every finitely generated $\Pic(X)$-graded $R$-module has a unique minimal $\Pic(X)$-graded free resolution. For the module $\Cox(X)$ this resolution is of the form
\[
\mathbb{G}:\dots\rightarrow \bigoplus_{D\in Pic(X_r)}R(-D)^{b_{2,D}}\rightarrow \bigoplus_{D\in Pic(X_r)}R(-D)^{b_{1,D}}\rightarrow R\rightarrow 0 
\]
where the rightmost nonzero map is given by a row matrix whose entries are a set of minimal generators of the ideal $I$. Since the differential of the resolution has degree $0$ we see that $I$ has exactly $b_{1,D}(\Cox(X))$ minimal generators of Picard degree $D$.

We will study the Betti numbers $b_{i,D}(\Cox(X))$ via the following sequence of vector spaces,
\begin{definition} For a divisor $D=m_1D_1+\dots+m_rD_r$, let $\mathbb{A}(D)$ be the sequence
\[
\mathbb{A}_0=H^0(X,D)
\]
\[
\mathbb{A}_j=\bigoplus _{1\leq i_1<\dots<i_j\leq n} H^0(X,D-C_{i_1}-\dots -C_{i_j})\text{ for $1\leq j\leq n$}
\]
with differential $d_j:\mathbb{A}_j\rightarrow \mathbb{A}_{j-1}$ given by
\[
d_1(u_i)=x_iu_0
\]
\[
d(u_{i_1\dots i_j})=\sum_{s=1}^j (-1)^{s+1}x_{i_s}u_{i_1\dots i_{s-1}\hat{i_s}i_{s+1}\dots i_j}
\]
\end{definition}

\begin{lemma} The sequence $(\mathbb{A}(D),d)$ is a complex and $b_{i,D}(Cox(X))=dim_k(H_i(\mathbb{A}(D)))$.
\end{lemma}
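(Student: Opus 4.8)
The plan is to recognize $\mathbb{A}(D)$, for each divisor $D$, as the degree-$D$ strand of the Koszul complex on the variables $x_1,\dots,x_n$ tensored over $R$ with $\Cox(X)$, and then to use the two standard descriptions of $\Tor$: one via the Koszul resolution of the residue field, the other via the minimal free resolution $\mathbb{G}$.

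First I would set up the Koszul complex. Because $R$ is positively graded by Lemma~\ref{lem:positivity}, the ideal $\mathfrak{m}=(x_1,\dots,x_n)$ is the unique maximal $\Pic(X)$-homogeneous ideal and $k=R/\mathfrak{m}$ is a $\Pic(X)$-graded $R$-module concentrated in degree $0$. The Koszul complex $K_\bullet=\bigwedge^\bullet R^n$, with
\[
K_j=\bigoplus_{1\le i_1<\dots<i_j\le n}R(-C_{i_1}-\dots-C_{i_j})\,e_{i_1}\wedge\dots\wedge e_{i_j}
\]
(where $e_i$ is placed in degree $C_i$) and differential $\partial(e_{i_1}\wedge\dots\wedge e_{i_j})=\sum_{s=1}^j(-1)^{s+1}x_{i_s}\,e_{i_1}\wedge\dots\wedge\hat{e}_{i_s}\wedge\dots\wedge e_{i_j}$, is a $\Pic(X)$-graded free resolution of $k$. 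Hence for the finitely generated $\Pic(X)$-graded module $\Cox(X)=R/I$ one has $\Tor^R_i(\Cox(X),k)\cong H_i\bigl(\Cox(X)\otimes_R K_\bullet\bigr)$ as $\Pic(X)$-graded vector spaces.

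Second I would compute the degree-$D$ piece of $\Cox(X)\otimes_R K_\bullet$. Since $\bigl(\Cox(X)\otimes_R R(-E)\bigr)_D=\Cox(X)_{D-E}$ and, by definition of the grading, $\Cox(X)_{E}=H^0(X,E)$, the degree-$D$ component of $\Cox(X)\otimes_R K_j$ is $\bigoplus_{i_1<\dots<i_j}H^0(X,D-C_{i_1}-\dots-C_{i_j})=\mathbb{A}_j(D)$, and in particular $\mathbb{A}_0(D)=H^0(X,D)$. Under this identification the differential induced by $\partial$ sends a section $u_{i_1\dots i_j}\in H^0(X,D-C_{i_1}-\dots-C_{i_j})$ to $\sum_{s=1}^j(-1)^{s+1}x_{i_s}u_{i_1\dots \hat{i_s}\dots i_j}$, where $x_{i_s}u_{i_1\dots \hat{i_s}\dots i_j}$ denotes the product of the section $x_{i_s}$ of $C_{i_s}$ with $u_{i_1\dots \hat{i_s}\dots i_j}$ inside $k(X)$, i.e.\ exactly the differential $d$ of the Definition. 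Consequently $(\mathbb{A}(D),d)$ is a complex, being a strand of the complex $\Cox(X)\otimes_R K_\bullet$, and $H_i(\mathbb{A}(D))\cong \Tor^R_i(\Cox(X),k)_D$.

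Finally I would pass from $\Tor$ to Betti numbers: tensoring the minimal free resolution $\mathbb{G}$ over $R$ with $k$ makes all differentials vanish, because minimality (again using positivity of the grading) forces the entries of the matrices of $\mathbb{G}$ to lie in $\mathfrak{m}$. Therefore $\Tor^R_i(\Cox(X),k)_D\cong (G_i\otimes_R k)_D=k^{\,b_{i,D}(\Cox(X))}$, and comparing dimensions yields $b_{i,D}(\Cox(X))=\dim_k H_i(\mathbb{A}(D))$. The only genuine subtlety I anticipate is bookkeeping: checking that the twists $R(-C_{i_1}-\dots-C_{i_j})$ in $K_\bullet$ are compatible with the given $\Pic(X)$-grading on $\Cox(X)$, that the induced maps on the degree-$D$ strands reproduce $d$ with the correct signs, and that the homological algebra invoked (Koszul resolution of $k$, independence of $\Tor$ from the chosen resolution, minimality killing the differentials) remains valid for modules graded by $\Pic(X)\cong\zz^r$ rather than by $\zz$. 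That last point is exactly what Lemma~\ref{lem:positivity} is for: the coarsening to a positive $\zz$-grading it provides guarantees finite-dimensional graded pieces, existence and uniqueness of minimal graded free resolutions, and the usual behaviour of $\Tor$.
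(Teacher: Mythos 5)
Your proof is correct and follows essentially the same route as the paper: identify $\mathbb{A}(D)$ with the degree-$D$ strand of $\Cox(X)\otimes_R K_\bullet$, use the balance of $\Tor$ to compute it from the Koszul resolution of $k$, and use minimality of $\mathbb{G}$ to read off the Betti numbers. You merely spell out the grading bookkeeping and the role of Lemma~\ref{lem:positivity} more explicitly than the paper does.
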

\begin{proof} $\mathbb{A}(D)$ is the degree $D$ part of $\Cox(X)\otimes_R \mathbb{K}$ where $\mathbb{K}$ is the Koszul complex on $x_1,\dots, x_n$. Hence
\[
H_i((Cox(X)\otimes \mathbb{K})_D)=(H_i(Cox(X)\otimes \mathbb{K}))_D=(Tor^R_i(Cox(X),k))_D=k^{b_{i,D}(\Cox(X))}
\]
where the last two equalities follow since $\Tor^R(A,B)$ is symmetric in $A$ and $B$ and the Koszul complex is the minimal free resolution of $k$ over $R$.
\end{proof}
The homology of the complexes $\mathbb{A}(D)$ can, in some cases, be determined by purely geometric methods. In this paper we will focus on the calculation of the first Betti numbers and particularly in finding conditions on a divisor $D$ which ensure $b_{1,D}(\Cox(X))=0$.

From now on we will focus on $H_1(\mathbb{A}(D))$ and use the following terminology:
\[
\bigoplus_{1\leq i<j\leq n}H^0(X,D-C_i-C_j)\rightarrow \bigoplus_{i=1}^n H^0(X,D-C_i)\rightarrow H^0(X,D)
\]
\begin{itemize}
\item{A cycle is an expression $\sigma=\sum_{i=1}^n s_iu_i$ with $s_i\in H^0(X,D-C_i)$ such that $\partial\sigma=\sum s_ix_i=0\in H^0(X,D)$.}
\item{A boundary is a linear combination of expressions of the form $s_{kt}(x_ku_t-x_tu_k)\in \bigoplus_{i=1}^n H^0(X,D-C_i)$ with coefficients $s_{kt}\in H^0(X,D-C_k-C_t)$. Note that modulo boundaries $(s_{kt}x_k)u_t=(s_{kt}x_t)u_k$ so any section in direction $u_t$ divisible by $x_k$ can be substituted by one in direction $u_k$ divisible by $x_t$}.
\item{The support of a cycle $\sigma=\sum_{i=1}^n s_iu_i$ is $||\sigma||=\{i:  s_i\neq 0\}$ and the size of the support is the cardinality of $||\sigma||$ denoted  $|\sigma|$.}
 \end{itemize}
\begin{lemma} \label{lem:easy} Any cycle $\sigma=s_iu_i+s_ju_j$ with $|\sigma|\leq 2$ is a boundary\end{lemma}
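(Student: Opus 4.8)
The plan is to exploit that $\Cox(X)$ is an integral domain (its defining ideal $I$ is prime) --- in fact a unique factorization domain, since $\Pic(X)$ is torsion free --- together with the hypothesis that each variable maps to an irreducible, hence prime, element of $\Cox(X)$.

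First I would dispose of the cases $|\sigma|\le 1$. If $\sigma=0$ it is the empty combination of boundaries. If $\sigma = s\,u_i$ with $s\ne 0$, then $\partial\sigma = s\,x_i$ is a product of nonzero elements of the domain $\Cox(X)$, so $\partial\sigma\ne 0$ and $\sigma$ is not a cycle; thus no cycle has support of size exactly $1$. Hence we may assume $\sigma = s_i u_i + s_j u_j$ with $i\ne j$ and $s_i,s_j$ both nonzero.

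Next, the cycle condition $\partial\sigma=0$ reads $s_i x_i = -\,s_j x_j$ in $H^0(X,D)\subseteq\Cox(X)$. Since $x_j$ is a prime element dividing the right-hand side, it divides $s_i x_i$; and $x_j$ does not divide $x_i$, because two distinct variables of the (minimal) presentation are non-associate irreducibles --- otherwise $x_i-\lambda x_j\in I$ for some $\lambda\in k^\times$ and a variable could be eliminated. Therefore $x_j\mid s_i$; write $s_i = g\,x_j$ with $g\in\Cox(X)$. Comparing $\Pic(X)$-degrees forces $g$ to be homogeneous of degree $D-C_i-C_j$, i.e. $g\in H^0(X,D-C_i-C_j)$. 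Cancelling $x_j$ from $g\,x_j\,x_i = s_i x_i = -\,s_j x_j$ (legitimate in a domain) gives $s_j = -\,g\,x_i$, and hence
\[
\sigma = s_i u_i + s_j u_j = (g\,x_j)\,u_i - (g\,x_i)\,u_j = -\,g\,\bigl(x_i u_j - x_j u_i\bigr),
\]
which is a boundary (with coefficient $s_{ij}=-g\in H^0(X,D-C_i-C_j)$), as required.

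The only real content is the passage from the additive relation $s_i x_i = -\,s_j x_j$ to the factorization $s_i = g\,x_j$; this is exactly where the irreducibility of the $x_i$ and the factoriality of $\Cox(X)$ (hence the torsion-freeness of $\Pic(X)$) enter, and I expect it to be the crux. If one prefers to avoid quoting that $\Cox(X)$ is a UFD, the same step can be carried out on $X$: irreducibility of $x_i$ forces its zero divisor $E_i:=\operatorname{div}_0(x_i)$ to be a prime divisor, the equality $s_i x_i = -\,s_j x_j$ of sections of $\Osh_X(D)$ says $\operatorname{div}_0(s_i)+E_i = \operatorname{div}_0(s_j)+E_j$, and since the prime divisor $E_j$ does not appear in $E_i$ it must appear in $\operatorname{div}_0(s_i)$; thus $g:=s_i/x_j$ is a regular section of $\Osh_X(D-C_i-C_j)$ and one finishes as above. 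The only delicate bookkeeping point is excluding the associate/degenerate cases, which is why minimality of the presentation is used.
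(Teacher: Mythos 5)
Your proof is correct and follows essentially the same route as the paper's: both dispose of $|\sigma|\le 1$ using that $\Cox(X)$ is a domain, and both derive $s_i=g\,x_j$, $s_j=-g\,x_i$ from the relation $s_ix_i=-s_jx_j$ via factoriality of $\Cox(X)$ and irreducibility of the variables, exhibiting $\sigma$ as $\partial(\pm g\,u_{ij})$. The extra care you take in checking that $x_i$ and $x_j$ are non-associate is a reasonable refinement of the same argument, not a different approach.
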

\begin{proof} If $\sigma$ is a cycle with $|\sigma|\leq 1$ then $\sigma=0$ since $\Cox(X)$ is an integral domain. If $\sigma$ is a cycle with $|\sigma|=2$ then $0=\partial\sigma=s_ix_i+s_jx_j$ so $s_ix_i=-s_jx_j$. Since  $\Cox(X)$ is factorial (see~\cite{EKW}) and $x_i,x_j$ are irreducible it follows that $s_i=s'x_j$ and $s_j=s'x_i$  so $\sigma=s'(x_iu_j-x_ju_i)=\partial(s'u_{ij})$\end{proof}


\begin{lemma} \label{lem:surj} If $C_1$ and $C_2$ are disjoint and $D$ is such that $H^1(X,D-C_1-C_2-C_3)=0$ then the map $s_1u_1+s_2u_2\rightarrow s_1x_1+s_2x_2$ 
\[
H^0(X,D-C_1-C_3)\oplus H^0(X,D-C_1-C_2)\rightarrow H^0(X,D-C_1)
\]
is surjective.
\end{lemma}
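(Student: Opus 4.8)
The plan is to recognise the map in the statement as the one induced on global sections by a surjection of sheaves coming from the Koszul complex of $x_1$ and $x_2$, and then to read off its surjectivity from the vanishing of the $H^1$ in the hypothesis.

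First I would write down the Koszul complex of the sections $x_1\in H^0(X,C_1)$ and $x_2\in H^0(X,C_2)$, the complex of sheaves
\[
0\longrightarrow \Osh_X(-C_1-C_2)\longrightarrow \Osh_X(-C_1)\oplus\Osh_X(-C_2)\xrightarrow{(x_1,x_2)}\Osh_X\longrightarrow 0,
\]
whose first map is the Koszul syzygy $f\mapsto(x_2f,-x_1f)$ and whose second map is $(a,b)\mapsto ax_1+bx_2$. The key point is that the hypothesis that $C_1$ and $C_2$ are disjoint --- i.e.\ that the zero loci $\{x_1=0\}$ and $\{x_2=0\}$ in $X$ do not meet --- is exactly what makes this complex exact. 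Exactness can be checked on stalks: at any point $p\in X$ one of the line bundles $\Osh_X(C_i)$ trivialises near $p$ so that the corresponding section $x_i$ becomes a unit of $\Osh_{X,p}$, because $p$ lies off one of the two zero loci, and a Koszul complex on a list of elements one of which is a unit is split exact. (Conversely, at a point lying on both zero loci the right-hand map is not onto.) So disjointness of $C_1$ and $C_2$ gives a short exact sequence of sheaves.

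Next I would twist this short exact sequence by the invertible sheaf $\Osh_X(D-C_3)$ --- tensoring with a line bundle preserves exactness --- obtaining
\[
0\to \Osh_X(D-C_1-C_2-C_3)\to \Osh_X(D-C_1-C_3)\oplus\Osh_X(D-C_2-C_3)\to \Osh_X(D-C_3)\to 0,
\]
and then pass to the long exact sequence in cohomology. Since the twisted right-hand map still multiplies the first summand by $x_1$ and the second by $x_2$ and adds, the relevant stretch of that sequence is
\[
H^0(X,D-C_1-C_3)\oplus H^0(X,D-C_2-C_3)\longrightarrow H^0(X,D-C_3)\longrightarrow H^1(X,D-C_1-C_2-C_3),
\]
the first arrow being $(s_1,s_2)\mapsto s_1x_1+s_2x_2$. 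As the rightmost group vanishes by hypothesis, the first arrow is surjective, which is the assertion.

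The main --- and essentially only non-formal --- step is the equivalence between the disjointness hypothesis and exactness of the Koszul sheaf complex; even that reduces to the trivial fact that a Koszul complex containing a unit is split exact, so I do not expect any real difficulty, the remaining steps (twisting a short exact sequence, the long exact cohomology sequence, identifying the induced map, tracking Picard degrees) being routine. Finally it is worth recording, for use in the sequel, that this surjectivity is what lets one rewrite a term $s\,u_3$ of a cycle of $\mathbb{A}(D)$ as $s_1x_1u_3+s_2x_2u_3$ and hence, modulo the boundaries $x_1u_3-x_3u_1$ and $x_2u_3-x_3u_2$, as $x_3(s_1u_1+s_2u_2)$ --- removing the index $3$ from its support.
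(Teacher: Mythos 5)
Your proposal is correct and follows essentially the same route as the paper: exactness of the Koszul sequence $0\to\Osh_X(-C_1-C_2)\to\Osh_X(-C_1)\oplus\Osh_X(-C_2)\to\Osh_X\to 0$ from disjointness, twisting by $\Osh_X(D-C_3)$, and reading off surjectivity from the long exact sequence together with the vanishing of $H^1(X,D-C_1-C_2-C_3)$. Your stalk-level justification of exactness is a welcome elaboration of a step the paper leaves implicit, and your choice of twist correctly matches the cohomology sequence actually needed (the paper's statement and proof contain minor index slips here).
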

\begin{proof} Since $C_1$ and $C_2$ are disjoint the following short sequence of sheaves on $X$ is exact,
\[
0\rightarrow \Osh_X[-C_1-C_2]\rightarrow \Osh_X[-C_1]\oplus \Osh_X[-C_2]\rightarrow \Osh_X\rightarrow 0
\]
Tensoring with $\Osh_X[D]$ the long exact sequence in sheaf cohomology gives 
\[
\dots\rightarrow 
\bigoplus_{i=1}^2 H^0(X,D-C_i-C_3) \rightarrow H^0(X,D-C_3)\rightarrow H^1(X,D-C_1-C_2-C_3)\rightarrow\cdots
\]
from which the statement follows immediately.
\end{proof}
To show that all cycles $\sigma$ in $\mathbb{A}_1(D)$ for some divisor $D$ are boundaries (and conclude that $b_{1,D}(\Cox(X))=0$) we will perform two steps:
\begin{enumerate}
\item{We will use Lemma~\ref{lem:surj} to modify the support of cycles. More precisely,  if we know that every section $s_3$ in $H^0(X,D-C_3)$ can be written as $p_1x_1+p_2x_2$ then, modulo boundaries, \[
s_3u_3=p_1x_1u_3+p_2x_2u_3=p_1x_3u_1+p_2x_3u_2
\]  
so we can ``remove" the $u_3$ component from any cycle $\sigma$ (by adding components in directions $u_1$ and $u_2$ in exchange). For each divisor $D$, a set of these reductions is allowed and we will characterize them explicitly.}
\item{We will show that, if $D$ admits enough reduction moves, these can be combined to strictly reduce the size of the support of any cycle $\sigma$. Using Lemma~\ref{lem:easy} we will conclude that $\sigma$ is a boundary. To keep track of the reductions we introduce the language of games on graphs (see section~\ref{games}).}
\end{enumerate}
In the rest of this paper we will use this method to compute the Picard degrees of the minimal generators of the ideals which define the Cox rings of Del Pezzo surfaces.

\section{Del Pezzo surfaces}\label{sec:DelPezzo}
In this section we describe the fundamental traits of the geometry of Del Pezzo surfaces, for a more detailed treatment the reader should refer to~\cite{MANIN}. 
\begin{definition} A collection of $r\leq 8$ points in $\mathbb{P}^2$ is said to be in general position if no three lie on a line, no six lie on a conic and any cubic containing eight points is smooth at each of them.\end{definition}
\begin{definition} A Del Pezzo surface $X$ is a surface isomorphic to $\mathbb{P}^1\times \mathbb{P}^1$ or to the blow up of $\mathbb{P}^2$ at $r\leq 8$ general points $p_1,\dots, p_r$. In the second case we denote the surface by $X_r(p_1,\dots, p_r)$ or just by $X_r$ if the points are clear from the context.\end{definition}
There are at least two reasons to consider Del Pezzo surfaces as a class in themselves:
\begin{itemize}
\item{Del Pezzo surfaces can be characterized as those nonsingular projective surfaces with ample anticanonical divisor.}
\item{If $X$ is a Del Pezzo surface and $f:X\rightarrow Y$ is a birational morphism then $Y$ is also a Del Pezzo surface.}
\end{itemize}
There is exactly one Del Pezzo surface $X_r$ for each $r\leq 4$ (since the automorphism group of $\mathbb{P}^2$ acts transitively on $4$-tuples of general points) and for each $8\geq r\geq 5$ there are infinitely many nonisomorphic $X_r(p_1,\dots, p_5)$. If $r\leq 3$, we can assume that the blown up points are torus invariant and conclude that the surface $X_r$ is toric. The Del Pezzo surfaces $X_6(p_1,\dots, p_6)$ are precisely the cubic surfaces in $\mathbb{P}^3$. 


\begin{lemma}\label{lem:DelPezzobasics} For a Del Pezzo surface $X_r$, blown up from $\mathbb{P}^2$ via $\pi: X_r\rightarrow \mathbb{P}^2$ the following statements hold:
\begin{enumerate}
\item{${\rm Pic}(X_r)\cong \mathbb{Z}^{r+1}$ and a basis is given by
\begin{itemize}
\item{The pullback of the class of a hyperplane in $\Pic(\mathbb{P}^2)$, denoted $L=\pi^*([H])$ }
\item{The $r$ exceptional divisors of the blow ups, denoted $E_1,\dots, E_r$}
\end{itemize}
}
\item{In terms of this basis the intersection form on $\Pic(X_r)$ is:
\[
E_i\cdot E_j=-\delta_{ij}\text{, $L^2=1$ and } L\cdot E_i=0\text{ for all $i$}
\]
where $\delta_{ij}$ equals $1$ if $i=j$ and $0$ otherwise.
}
\item{The canonical divisor on $X_r$ is
\[
K=-3L+E_1+E_2+\dots +E_r
\]}
\end{enumerate}
\end{lemma}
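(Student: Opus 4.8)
\emph{Proof proposal.} The plan is to deduce all three statements from the standard behavior of $\Pic$, of the intersection pairing, and of the canonical class under a single blow-up of a smooth projective surface at a point, and then to iterate. Since $p_1,\dots,p_r$ are in general position they are, in particular, $r$ \emph{distinct} points of $\mathbb{P}^2$ (none infinitely near another), so by definition $\pi$ factors as a chain of blow-ups $X_r = Y_r\to Y_{r-1}\to\cdots\to Y_1\to Y_0=\mathbb{P}^2$, where $Y_k\to Y_{k-1}$ is the blow-up of the point lying over $p_k$. I would let $E_k$ denote the class on $X_r$ of the total transform of the exceptional curve of $Y_k\to Y_{k-1}$, and $L=\pi^*[H]$. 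Distinctness of the $p_k$ ensures that the center of the $k$-th blow-up is disjoint from the exceptional loci of the earlier ones, so $Y_k\to Y_{k-1}$ is an isomorphism near (the image of) $E_j$ for $j<k$; hence each $E_j$ is represented on $X_r$ by a smooth rational curve, and these $r$ curves are pairwise disjoint.

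For part (1), I would invoke the classical fact that the blow-up $b\colon S'\to S$ of a smooth surface at a point induces $\Pic(S')\cong b^*\Pic(S)\oplus\mathbb{Z}E$, with $b^*$ injective. Applying this to each $Y_k\to Y_{k-1}$ and using $\Pic(\mathbb{P}^2)=\mathbb{Z}[H]$, one obtains by induction $\Pic(X_r)=\mathbb{Z}L\oplus\mathbb{Z}E_1\oplus\cdots\oplus\mathbb{Z}E_r\cong\mathbb{Z}^{r+1}$, which is in particular torsion-free.

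For part (2), the three families of relations are immediate from the projection formula and the local structure of a point blow-up. The projection formula $\pi^*\alpha\cdot\pi^*\beta=\alpha\cdot\beta$ gives $L^2=[H]^2=1$; it also gives $L\cdot E_i=\pi^*[H]\cdot E_i=[H]\cdot\pi_*E_i=0$, since $\pi_*E_i=0$. The exceptional curve of a single point blow-up has self-intersection $-1$, and by the isomorphism statement of the first paragraph this is unchanged by the later blow-ups, so $E_i^2=-1$; finally $E_i\cdot E_j=0$ for $i\neq j$ because these classes are represented by disjoint curves.

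For part (3), I would use that a point blow-up $b\colon S'\to S$ satisfies $K_{S'}=b^*K_S+E$. Iterating along the chain of blow-ups yields $K_{X_r}=\pi^*K_{\mathbb{P}^2}+E_1+\cdots+E_r$; since $K_{\mathbb{P}^2}=-3[H]$ we get $\pi^*K_{\mathbb{P}^2}=-3L$ and therefore $K=-3L+E_1+\cdots+E_r$. The only point requiring care—and hence the main obstacle, modest as it is—is the bookkeeping for the iterated blow-up: one must verify that pulling the $i$-th exceptional curve all the way up to $X_r$ preserves $E_i^2=-1$ and keeps the $E_i$ pairwise disjoint. This is exactly where general position (distinctness, with no infinitely near points) enters, after which the later blow-ups are local isomorphisms near each $E_j$ and all intersection numbers are as claimed.
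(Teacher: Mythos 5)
Your proof is correct: it is the standard argument, deducing everything from the behavior of $\Pic$, the intersection pairing, and the canonical class under a single point blow-up and then iterating along the chain $X_r\to\cdots\to\mathbb{P}^2$, with the distinctness of the $p_i$ guaranteeing that the exceptional classes stay disjoint with self-intersection $-1$. The paper itself gives no proof of this lemma --- it is stated as background and referred to Manin's \emph{Cubic forms} --- and your write-up supplies exactly the classical argument that reference contains, so there is nothing substantive to compare.
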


\begin{definition} An irreducible curve $E$ on $X$ is an exceptional curve if and only if
\[
E^2=-1\text{ and } -K\cdot E=1
\]
\end{definition}
The following description of exceptional curves on a Del Pezzo surface can be found in~\cite{MANIN}.
\begin{lemma} The exceptional curves on any Del Pezzo surface $\pi:X_r\rightarrow \mathbb{P}^2$ ($r\leq 7$) are:
\begin{itemize}
\item{The exceptional divisors $e_i$}
\item{The strict transforms of lines through pairs of points $f_{ij}$.}
\item{For $r\geq 5$, the strict transforms of conics through five points. We will denote with $g_{S}$ the quadric going through the points $p_i$ with $i$ in the complement of $S\subset \{1,\dots, r\}$. Thus for $r=6$ these quadrics will be denoted $g_1,\dots, g_6$ and for $r=7$ they will be denoted with $g_{12}, \dots, g_{67}$.}
\item{For $r\geq 7$ the strict transforms $h_i$ of the cubics through $r$ points vanishing doubly through $p_i$.}
\end{itemize}
Their classes in the Picard group are shown in the table below (up to permutation of the $E_i$) 
\begin{table}[h]
\caption{Exceptional curves on a Del Pezzo surface}
\label{tab:-1curves1}
\begin{center}
\begin{tabular}{c|c|l}
$r$ & Symbol & Picard Degree (up to permutation of the $E_i$)\\[2pt]
\hline
\hline
$2,3,4$ & $e_1$ & $E_1$\\[2pt]
& $f_{12}$ & $L-E_1-E_2$ \\[2pt]
$5$, $6$ & $g_{6}/g_{67}$ & $2L-E_1-E_2-E_3-E_4-E_5$\\[2pt]
$7$ & $h_i$ & $3L-2E_1-E_2-E_3-E_4-E_5-E_6-E_7$\\

\end{tabular}
\end{center}
\end{table}

\end{lemma}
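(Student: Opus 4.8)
The plan is to classify an irreducible curve $E$ on $X_r$ with $E^2=-1$ and $-K\cdot E=1$ according to its behaviour under the blow-down $\pi: X_r\to\mathbb{P}^2$. Adjunction gives $2p_a(E)-2=E\cdot(E+K)=-1-1$, so $p_a(E)=0$ and $E$ is a smooth rational curve. If $\pi$ contracts $E$, then $E$ maps to a point, necessarily one of the $p_i$, and since $\pi^{-1}(p_i)=E_i$ is irreducible we get $E=E_i$. Otherwise $\pi|_E$ is birational onto an irreducible plane curve $\pi(E)$ of degree $d=L\cdot E\geq 1$; setting $m_i=\operatorname{mult}_{p_i}\pi(E)\geq 0$ we have $E=dL-\sum_{i=1}^r m_iE_i$, and the two numerical conditions become
\[
d^2-\textstyle\sum_{i=1}^r m_i^2=-1,\qquad 3d-\sum_{i=1}^r m_i=1.
\]

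The next step is to bound $d$. Cauchy--Schwarz applied to $(m_1,\dots,m_r)$ and $(1,\dots,1)$ gives $(3d-1)^2=\big(\textstyle\sum m_i\big)^2\leq r\sum m_i^2=r(d^2+1)$, i.e. $(9-r)d^2-6d+(1-r)\leq 0$; since $r\leq 7$ the leading coefficient is positive, so $d\leq\big(3+\sqrt{r(10-r)}\big)/(9-r)$, which forces $d\leq 1$ when $r\leq 4$, $d\leq 2$ when $r=5,6$, and $d\leq 3$ when $r=7$. Now subtract the two equations: $\sum_i m_i(m_i-1)=(d^2+1)-(3d-1)=(d-1)(d-2)$. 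For $d=1$ and $d=2$ this vanishes, so every $m_i\in\{0,1\}$, and $\sum m_i=3d-1$ forces exactly two (resp. five) of them to equal $1$ — giving the class $L-E_i-E_j$ (the curve $f_{ij}$), resp. $2L$ minus five of the $E_i$ (the curve $g_S$, which needs $r\geq 5$). For $d=3$ we get $\sum_i m_i(m_i-1)=2$; since each term is one of $0,2,6,\dots$, exactly one $m_i$ equals $2$ and the rest lie in $\{0,1\}$, and then $\sum m_i=8$ forces six more to equal $1$ — giving the class $3L-2E_i-\sum_{j\neq i}E_j$ (the curve $h_i$, which needs $r\geq 7$). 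This exhausts the possibilities.

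Finally I would prove the converse: each class in the list is represented by an irreducible curve with the required invariants, and this is where the general position hypotheses enter. The $E_i$ are irreducible with $E_i^2=-1$, $-K\cdot E_i=1$. For $L-E_i-E_j$ take the strict transform of the line through $p_i$ and $p_j$: it is irreducible because no third point lies on it. For $2L$ minus five $E_i$ take the strict transform of the conic through the five corresponding points: a reducible conic through five points would contain three collinear points, so this conic is smooth, hence irreducible. For $3L-2E_i-\sum_{j\neq i}E_j$ note that requiring a plane cubic to be singular at $p_i$ and to pass through six further points imposes $3+6=9$ conditions on the $\mathbb{P}^9$ of cubics, so such a cubic exists; a reducible such cubic (a line plus a conic, or three lines) would force three of the seven points to be collinear or six of them to lie on a conic, so it is an irreducible nodal cubic, and blowing up its node together with the six simple points produces a smooth rational $(-1)$-curve of the stated class. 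One then checks the displayed Picard degrees and that different index choices give different curves.

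The genuinely delicate points, both elementary but requiring care, are the case analysis solving $\sum m_i=8$, $\sum m_i^2=10$ over the nonnegative integers in the $d=3$ case (ruling out two multiplicities $\geq 2$, or a single multiplicity $3$ — the rewriting $\sum m_i(m_i-1)=2$ makes this transparent), and, in the converse direction, checking that the general position hypotheses really do force the relevant line, conic and cubic to be irreducible rather than degenerate.
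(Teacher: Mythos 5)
Your argument is correct, but note that the paper itself offers no proof of this lemma: it simply cites Manin's \emph{Cubic forms}, so there is nothing internal to compare against. What you have written is essentially the classical argument one finds in that reference. The forward direction is airtight: adjunction gives $p_a(E)=0$; a non-contracted irreducible $E$ is the strict transform of $\pi(E)$, so $E=dL-\sum m_iE_i$ with $m_i=\operatorname{mult}_{p_i}\pi(E)$; the two numerical constraints $d^2-\sum m_i^2=-1$ and $3d-\sum m_i=1$ combined with Cauchy--Schwarz give $(9-r)d^2-6d+(1-r)\leq 0$, and your discriminant computation $9-(9-r)(1-r)=r(10-r)$ and the resulting bounds $d\leq 1,2,3$ for $r\leq 4$, $r=5,6$, $r=7$ all check out, as does the reduction to $\sum m_i(m_i-1)=(d-1)(d-2)$ that makes the multiplicity analysis transparent. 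The converse direction is correctly flagged as the place where general position enters; the only spot where your sketch is slightly too quick is the irreducibility of the singular cubic for $r=7$: the statement ``a line plus a conic forces three collinear points or six on a conic'' is not quite how the contradiction arises, since a line can hold $2$ of the points and a conic $5$, totalling $7$. The correct count uses that the singular point $p_i$ must lie on \emph{both} components (a line and a smooth conic are each smooth), so the line carries at most $2$ of the seven points including $p_i$ and the conic at most $5$ including $p_i$, giving at most $6$ distinct points --- a contradiction; the three-lines case is ruled out directly by no-three-collinear. One should also note that the resulting irreducible singular cubic may a priori be cuspidal rather than nodal, but a single blow-up of a cusp also produces a smooth rational strict transform of the stated class, so the conclusion is unaffected.
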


\begin{definition} For a divisor $D$ on $X_r$ let 
\[
m_D=\min\{D\cdot F:\text{ $F$ is an exceptional curve on $X_r$}\}
\]
\end{definition}
A divisor on a Del Pezzo surface is nef if and only if $m_D\geq 0$, moreover every nef divisor is effective,
\begin{lemma}\label{lem:nefandeffective} If $r\geq 2$ and $D\in Pic(X_r)$ is such that $D\cdot E_i\geq 0$ for every exceptional curve then $D$ is effective.
\end{lemma}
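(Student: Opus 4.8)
The plan is to show directly that $h^0(X_r,D)\geq 1$ via Riemann--Roch on the surface $X_r$. First I would translate the hypothesis: requiring $D\cdot F\geq 0$ for every exceptional curve $F$ is, by the characterization of nef divisors on a Del Pezzo surface recalled just above (this is where the assumption $r\geq 2$ really matters, since for $r\leq 1$ there are too few negative curves to cut out the nef cone), exactly the statement that $D$ is nef. So from here on $D$ is a nef divisor on $X_r$, whose anticanonical class $-K$ is ample. Riemann--Roch on a smooth projective surface gives $\chi(\Osh_{X_r}(D))=\chi(\Osh_{X_r})+\tfrac12\,D\cdot(D-K)=1+\tfrac12(D^2-D\cdot K)$, using $\chi(\Osh_{X_r})=1$ because $X_r$ is rational.

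The two positivity inputs I would then supply are: (i) $D^2\geq 0$, which holds for any nef divisor on a surface — write $nD+H$ for $H$ ample, note it is ample, expand $(nD+H)^2>0$, divide by $n^2$ and let $n\to\infty$; and (ii) $D\cdot K\leq 0$, because $-K$ is ample (in particular effective) and $D$ is nef, so $D\cdot(-K)\geq 0$. Together these give $\chi(\Osh_{X_r}(D))\geq 1$. To upgrade this to $h^0\geq 1$ I would kill the top cohomology by Serre duality: $h^2(X_r,D)=h^0(X_r,K-D)$, and for any ample $H$ one has $(K-D)\cdot H=K\cdot H-D\cdot H<0$ since $K\cdot H<0$ and $D\cdot H\geq 0$, so $K-D$ is not effective and $h^2(X_r,D)=0$. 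Hence $h^0(X_r,D)=\chi(\Osh_{X_r}(D))+h^1(X_r,D)\geq\chi(\Osh_{X_r}(D))\geq 1$, so $D$ is effective.

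I do not expect a genuine obstacle: the argument is essentially routine once the hypothesis has been read as "$D$ nef", and the only non-trivial ingredients are two standard facts (nef $\Leftrightarrow m_D\geq 0$ on a Del Pezzo surface, and nef $\Rightarrow$ self-intersection $\geq 0$ on a surface). One can shorten the cohomology bookkeeping by instead applying Kodaira vanishing to $K+A$ with $A=D-K$ ample, which yields $h^1(X_r,D)=h^2(X_r,D)=0$ in one stroke and leaves $h^0(X_r,D)=\chi(\Osh_{X_r}(D))\geq 1$; but the Serre-duality version above stays within an elementary toolkit, so that is the one I would write.
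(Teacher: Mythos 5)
Your proof is correct, but it takes a genuinely different route from the paper's. You argue cohomologically: translate the hypothesis into "$D$ is nef" via the cone description stated just before the lemma, then apply Riemann--Roch ($\chi(\Osh_{X_r}(D))=1+\tfrac12(D^2-D\cdot K)\geq 1$, using $D^2\geq 0$ and $-K\cdot D\geq 0$) together with $h^2(X_r,D)=h^0(X_r,K-D)=0$ to force $h^0\geq 1$; all of these steps check out, and the Kodaira-vanishing shortcut you mention is also fine. The paper instead gives a purely arithmetic induction on $r$: for $r=2$ it writes $D=(D\cdot E_1)(L-E_1)+(D\cdot(L-E_1-E_2))L+(D\cdot E_2)(L-E_2)$, a nonnegative combination of effective classes since the dual basis to the three exceptional classes consists of effective divisors; for $r>2$ it sets $D=D'-m_DK$ with $D'=D+m_DK$, notes $m_{D'}=0$ so $D'$ descends to a smaller Del Pezzo and is effective by induction, and adds back the effective divisor $-m_DK$. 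The trade-off: your argument is shorter, avoids induction, and really proves the more general statement "nef implies effective" on any surface with $-K$ ample; but it leans on the unproved assertion that the $(-1)$-curves cut out the nef cone (needed to get $D^2\geq 0$), whereas the paper's proof uses only the stated hypothesis $D\cdot E_i\geq 0$ and no cohomology at all, and its decomposition $D=D'-m_DK$ is a device reused in the later case analysis of the paper.
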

\begin{proof} If $r=2$, the picard classes of exceptional curves are $E_1, L-E_1-E_2, E_2$. These are a basis for the Picard group ${\rm Pic}(X_2)$ with dual basis (with respect to the intersection form) $L-E_1,L,L-E_2$ consisting of effective divisors. Thus a nef divisor $D$ is a combination with nonnegative coefficients $D=a_0(L-E_1)+a_1L+a_2(L-E_2)$ and hence effective.
If $D$ is a nef divisor on $X_r$ for $r>2$ and $m_D=0$ then $D$ is a divisor on a smaller Del Pezzo and thus effective by induction. If $m_D>0$ let $D=D'-m_DK$ and note that since $-K$ is effective $D$ is effective if $D'$ is. Finally $D'$ is nef and $m_{D'}=0$ so by induction $D'$ is effective.  
\end{proof}

\section{The Cox rings of Del Pezzo surfaces}

The Cox ring of a Del Pezzo surface $X_r$ is the $\Pic(X_r)$-graded algebra

\[
\Cox(X_r)=\hspace{-5pt} \bigoplus_{(m_0,\dots,m_r)\in\mathbb{Z}^{r+1}} \hspace{-5pt} 
H^0(X_r,m_0L+m_1E_1+\dots+m_rE_r)
\]

Batyrev and Popov show in~\cite{BP} that $\Cox(X_r)$ is a finitely generated $k$-algebra for $r\leq 8$ and that it's generators are in one to one correspondence with the exceptional curves on $X_r$ if $3\leq r\leq 7$ (see Theorem 3.2 in~\cite{BP}).

\begin{definition} A global section $\gamma\in H^0(X_r,D)$ is distinguished if its zero locus is supported on exceptional curves.\end{definition}

If $C_j=m_0L+m_1E_1+\dots+m_rE_r$ is the divisor class of an exceptional curve then $H^0(X_r,C_j)$ has only one distinguished global section (up to multiplication by a nonzero constant). Batyrev and Popov show that such sections generate the algebra $\Cox(X_r)$ (see Proposition 3.4 in~\cite{BP}). 

As a result, there is a presentation
\[
k[V_r]/I_r(p_1,\dots, p_r)\cong{\rm Cox}(X_r(p_1,\dots, p_r))
\]
where $k[V_r]$ is a $\Pic(X_r)$-graded polynomial ring with one variable for each exceptional curve in $X_r$. The $\Pic(X_r)$-homogeneous ideal $I_r(p)$ consists of the linear relations among all distinguished global sections of divisors of the form $D=m_0L+m_1E_1+\dots+m_rE_r$ on $X$. By letting ${\rm Deg}(D)=-K\cdot D$ we see that the $\Pic(X)$-grading is finer than the grading by total degree on $k[V_r]$ and in particular that $I_r(p)$ is homogeneous with respect to this grading. Thus the following ideal is well defined,
\begin{definition} Let $Q_r(p)$ be the ideal generated by the total degree $2$ part of $I_r(p)$.
\end{definition}
What are the minimal generators of the ideals $I_r(p_1,\dots, p_r)$?  Batyrev and Popov provide the following conjectural description:
\begin{itemize}
\item{{\bf [BP] Conjecture of Batyrev and Popov:} For $4\leq r\leq 8$ and every choice of points $p_1,\dots, p_r$, the ideal $I_r(p_1,\dots, p_r)$ is generated by quadrics. In other words $I_r(p)=Q_r(p)$.} 
\end{itemize}
The nef and effective divisor classes of anticanonical degree $2$ can be written as a sum $D=F_1+F_2$ of (the picard classes of ) two intersecting exceptional curves (the curves must intersect since otherwise $D\cdot F_1=-1<0$ contradicting the fact that $D$ is nef). Considering all such pairs in $X_r$ we immediately obtain Table~\ref{tab:Q2}. We will show that all minimal generators of $I_r(p)$ have nef and effective degrees so that ${\rm [BP]}$ is equivalent (for $r\leq 7$) to 
\begin{itemize}
\item{{\bf [BP] in terms of $\Pic(X_r)$-graded Betti numbers:} For $r\leq7$ and any Del Pezzo surface $X_r(p_1,\dots, p_r)$, the Betti number $b_{1,D}(\rm{Cox}(X_r))=0$ for all $D\in Pic(X_r)$ with $-K\cdot D\geq 3$.
\smallskip
}
\end{itemize}

As stated in the introduction, the conjecture of Batyrev and Popov was shown for $r\leq 7$ by Serganova and Skorobogatov in~\cite{SS}. In the rest of this paper we provide an elementary proof of this result using the method introduced in Section~\ref{sec:method}. We will begin by showing that all minimal generators of $I_r(p)$ have nef and effective Picard degrees,

\begin{table}
\caption{Nef divisors of anticanonical degree $2$ (up to permutation of the $E_i$)}
\label{tab:Q2}
\begin{center}
\begin{tabular}{c|l}
$r$ &  Picard Degree \\[2pt]
\hline 
\hline
$4,5$ & $L-E_1$\\[2pt]
& $2L -E_1-E_2-E_3-E_4$\\[2pt]
\hline
$6$ & $3L-2E_1-E_2-E_3-E_4-E_5-E_6$\\[2pt]
\hline
$7$ & $4L-2E_1-2E_2-2E_3-E_4-E_5-E_6-E_7$\\[2pt]
& $5L-E_1-2E_2-2E_3-2E_4-2E_5-2E_6-2E_7$\\[2pt]
& $3L-E_1-E_2-E_3-E_4-E_5-E_6-E_7=-K_7$\\ 
\end{tabular}
\end{center}
\end{table}

\begin{lemma} \label{lem:mingens} If $b_{1,D}(Cox(X_r))\neq0$ then $D$ is a nef and effective divisor.\end{lemma}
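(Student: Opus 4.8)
The plan is to analyze a nonzero cycle $\sigma = \sum_i s_i u_i \in \mathbb{A}_1(D)$ of minimal support and extract geometric positivity from it. Recall $b_{1,D}(\Cox(X_r)) = \dim_k H_1(\mathbb{A}(D))$, so if $b_{1,D} \neq 0$ there exists a cycle $\sigma$ that is not a boundary; by Lemma~\ref{lem:easy} we may assume $|\sigma| \geq 3$. Pick any $i \in ||\sigma||$. Then $0 \neq s_i \in H^0(X_r, D - C_i)$, so the divisor class $D - C_i$ is effective, hence $D = C_i + (D - C_i)$ is a sum of the exceptional class $C_i$ and an effective class. In particular $D$ is effective. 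The real content is to show $D$ is also nef, i.e.\ $D \cdot F \geq 0$ for every exceptional curve $F$ on $X_r$.

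Suppose for contradiction that $D \cdot F < 0$ for some exceptional curve $F$, with class $C_F$ (one of the variables in $k[V_r]$). Since $F$ is irreducible with $F^2 = -1$ and $D$ is effective, write $D \sim aF + D'$ where $D'$ is effective with $F \not\subseteq \mathrm{Supp}(D')$ and $a \geq 1$ is determined by $D \cdot F < 0$ (indeed $a = -(D\cdot F)/F^2 \cdot(\text{correction})$; concretely $a \geq 1$ because any effective representative of $D$ must contain $F$ with multiplicity at least one when $D \cdot F < 0$). Translated into $\Cox(X_r)$: every global section of $D$ is divisible by $x_F$, the distinguished section cutting out $F$. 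Now apply this to each $s_i \in H^0(X_r, D - C_i)$: I want to argue that either $D - C_i$ is not effective (forcing $s_i = 0$, shrinking the support, contradiction with minimality) or $s_i$ is itself divisible by $x_F$ after accounting for how $C_i$ meets $F$. The clean way is: since $\Cox(X_r)$ is factorial (as used in Lemma~\ref{lem:easy}) and $x_F$ is irreducible, look at $0 = \partial\sigma = \sum_i s_i x_i$ in $H^0(X_r, D)$. Every term $s_i x_i$ lies in the degree-$D$ part, so each $s_i x_i$ is divisible by $x_F$. Since $x_F$ is prime, for each $i$ either $x_F \mid s_i$ or $x_F \mid x_i$, i.e.\ $x_i = x_F$ (the variable for $F$ itself). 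If $i \neq F$ for all $i \in ||\sigma||$, then $x_F \mid s_i$ for all $i$, so $\sigma = x_F \tilde\sigma$ where $\tilde\sigma$ is a cycle for the divisor $D - C_F$; but multiplication by the unit-free element $x_F$ is injective on the domain, and one checks $\tilde\sigma$ is a boundary iff $\sigma$ is (multiply a chain homotopy by $x_F$), contradicting that $\sigma$ is not a boundary — unless we induct on $-K\cdot D$ and invoke that $b_{1, D-C_F} = 0$ for the smaller degree. If instead $F \in ||\sigma||$, peel off that component: the remaining partial sum $\sum_{i \neq F} s_i x_i = -s_F x_F$ is divisible by $x_F$, and rerun the divisibility argument on the support $||\sigma|| \setminus \{F\}$.

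The cleanest packaging, which I would adopt, is an induction on the anticanonical degree $-K\cdot D$ using Lemma~\ref{lem:nefandeffective}: if $D$ is not nef, choose an exceptional curve $F$ with $D\cdot F < 0$; then $D - C_F$ is "closer to nef" and still has (by the divisibility argument above) $b_{1, D-C_F}(\Cox(X_r)) \neq 0$; iterate until the degree can no longer support a nonzero $b_1$ (e.g.\ it drops below the minimal anticanonical degree of any quadratic relation, or $D - \sum C_{F_k}$ becomes non-effective, which is impossible for something carrying syzygies). Since $-K$ is ample on a Del Pezzo surface, $-K\cdot D$ strictly decreases at each step and stays bounded below once $D$ stays effective, so the process terminates, yielding the contradiction.

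The main obstacle is the bookkeeping in the divisibility step: making precise that a nonzero non-boundary cycle in degree $D$ with an exceptional curve $F$ satisfying $D\cdot F<0$ produces a nonzero non-boundary cycle in degree $D - C_F$. This requires knowing that $H^0(X_r, E)$-multiplication by $x_F$ identifies $\mathbb{A}(D-C_F)$ with the $x_F$-divisible part of $\mathbb{A}(D)$ compatibly with differentials — which follows because $\Cox(X_r)$ is a domain and $x_F$ is prime, but the edge case where $F$ itself lies in the support $||\sigma||$ needs the extra step of first using a boundary move (as in the second bullet after Lemma~\ref{lem:surj}) to relocate any $x_F$-divisible section out of direction $u_F$, and checking this does not re-enlarge the support. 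Everything else — effectivity of $D$, the reduction to $|\sigma| \geq 3$, the termination of the induction via ampleness of $-K$ — is routine given the lemmas already in hand.
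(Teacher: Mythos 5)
Your opening (effectivity follows since $\mathbb{A}_1(D)\neq 0$ forces some $D-C_i$ to be effective) and your key mechanism --- that $D\cdot F<0$ forces the exceptional curve $F$ into the fixed part, so by factoriality of $\Cox(X_r)$ the prime element $x_F$ divides the relevant sections --- are exactly the right ingredients, and in the branch where $F\in||\sigma||$ you are one boundary move away from the paper's argument. The genuine gap is in the other branch and in how you assemble the cases. When $F\notin||\sigma||$ you factor $\sigma=x_F\tilde\sigma$ and propose an induction on $-K\cdot D$: produce a non-boundary cycle in degree $D-C_F$ and ``iterate until the degree can no longer support a nonzero $b_1$.'' This iteration does not terminate in a contradiction: each step requires the \emph{current} divisor to be non-nef, and nothing prevents $D-C_F$ (or $D-\sum_k C_{F_k}$ after several steps) from being nef, at which point the process halts having established only that some nef divisor has nonzero $b_1$ --- which contradicts nothing about the original $D$. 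Equally, the inductive hypothesis ``$b_{1,D'}\neq 0\Rightarrow D'$ nef'' applied to $D'=D-C_F$ yields only $D\cdot F=(D-C_F)\cdot F-1\geq -1$, not $D\cdot F\geq 0$. So as written the induction does not close.

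The fix is to drop the induction and the case split entirely: the support collapses in one step. Let $E$ be the exceptional curve with $D\cdot E\leq -1$. For every other exceptional curve $F$ one has $(D-F)\cdot E\leq D\cdot E<0$, so $E$ is a fixed component of $|D-F|$ and every $s_F\in H^0(X_r,D-F)$ factors as $s_F=x_E t_F$ with $t_F\in H^0(X_r,D-E-F)$. The boundary $\partial(t_F u_{EF})$ then replaces $s_F u_F=x_E t_F u_F$ by $x_F t_F u_E$ --- note the direction: you push every component \emph{into} $u_E$, not out of it (your final paragraph has this reversed). Doing this for all $F\neq E$ leaves a cycle supported on $\{E\}$ alone, which is zero by Lemma~\ref{lem:easy} since $\Cox(X_r)$ is a domain; hence $\sigma$ was a boundary. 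This is the paper's proof; it needs no induction on $-K\cdot D$, no base case, and no distinction according to whether $E$ lies in $||\sigma||$. (Effectivity also comes for free from nefness via Lemma~\ref{lem:nefandeffective}, though your direct argument for it is fine.)
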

\begin{proof} If $D$ is not nef then there exists an exceptional curve $E$ such that $D\cdot E\leq -1$. For any exceptional curve $F$ distinct from $E$ we have that $(D-F)\cdot E < 0$ and this implies that:
\[
H^0(X_r,D-E-F)\otimes H^0(X_r,E)\cong H^0(X_r,D-F),
\]
thus every section in $H^0(X_r,D-F)$ is a multiple of $E$.

As a result, modulo boundaries, every cycle $\sigma=\sum s_iu_i$ is equal to a cycle $p_ku_k$ which is congruent to $0$ by Lemma~\ref{lem:easy}. Hence $\sigma$ is a boundary and $b_{1,D}(Cox(X_r))=0$.\end{proof} 

{\bf Example.} For $r=4$, we can assume, without loss of generality that $p_1=[1:0:0]$, $p_2=[0:1:0]$, $p_3=[0:0:1]$ and $p_4=[1:1:1]$. Thus,
\begin{itemize}
\item{$R=k[V_4]=k[e_1,\dots ,e_4,f_{12},\dots,f_{34}]$ graded by
$\deg(f_{ij})=L-E_i-E_j \text{ ,  } \deg(e_i)=E_i$}
\item{$Q_4$ is the ideal generated by
\[
\begin{array}{c|c}
$Degree in $\Pic(X_4)$ $ & $Minimal Generator$\\
\hline
L-E_1 & e_2f_{12} -e_3f_{13}-e_4f_{14},\\
L-E_2 & e_1f_{12} - e_3f_{23}-e_4f_{24},\\
L-E_3 & e_1f_{13} -e_2f_{23}+e_4f_{34},\\
L-E_4 & e_1f_{14} -e_2f_{24}-e_3f_{34}\\
2L-E_1-E_2-E_3-E_4 & f_{14}f_{23}-f_{12}f_{34} -f_{13}f_{24},\\
\end{array}
\]}
\end{itemize}
As observed by Batyrev and Popov in~\cite{BP} the ideal $Q_4$ is the coordinate ring of $Gr(2,5)$ in the Plucker embedding and in particular a prime ideal. The equality $Q_4=I_4$ follows by a dimension argument (see Proposition 4.1 in~\cite{BP}).

\section{The strategy}
We will prove [BP] by induction on $r$. For each $r>4$ we will show:
\begin{enumerate}
\item{$b_{1,D}({\rm Cox}(X_r))=0$ for all nef and effective divisors of degree $> 2$ which have the property that $D\cdot C_j=0$ for some exceptional curve $C_j$. These divisors are pullbacks of divisors on a Del Pezzo $X_{r-1}$ and the statement will follow by the induction hypothesis.}
\item{If a divisor does not contract any exceptional curve, then $H_1(\mathbb{A}(D))=0$. We will show this by writing every cycle in $\mathbb{A}_1(D)$ as a sum of boundaries via an algorithm which uses Lemma~\ref{lem:surj} and the combinatorics of exceptional curves. Developing this algorithm will be the content of the remaining sections.}
\end{enumerate}

We begin with a preliminary lemma about distinguished sections of $X_7$.

\begin{lemma}\label{27-curves}
Let $E_1E_1',\ldots,E_{28},E_{28}'$ be the distinguished sections of $|-K|$ on $X_7$. Any $27$ of these sections span a three dimensional vector space.
\end{lemma}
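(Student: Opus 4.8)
The plan is to reduce the statement to the classical fact that few bitangents of a smooth plane quartic can be concurrent. First, $h^0(X_7,-K)=3$, so a set of $27$ of the distinguished sections $\gamma_i:=E_iE_i'$ fails to span $H^0(X_7,-K)$ precisely when all $27$ of them lie in a common $2$-dimensional subspace. The geometric input I would invoke is that the anticanonical morphism $\phi\colon X_7\to\mathbb{P}^2=\mathbb{P}\bigl(H^0(X_7,-K)\bigr)$ realizes $X_7$ as a double cover branched along a smooth quartic curve $B$, with $\phi^*\mathcal{O}_{\mathbb{P}^2}(1)=\mathcal{O}_{X_7}(-K)$ and $\phi^*\colon H^0(\mathbb{P}^2,\mathcal{O}(1))\xrightarrow{\sim}H^0(X_7,-K)$. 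Under this identification each $\gamma_i$ is a linear form on $\mathbb{P}^2$ cutting out a line $\ell_i$, and its divisor $\phi^*\ell_i=E_i+E_i'$ is reducible; since $\phi$ is a double cover, reducibility of $\phi^*\ell_i$ forces $\ell_i$ to be a bitangent of $B$, so the $28$ sections $\gamma_i$ correspond to the $28$ bitangents of $B$.

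Second, I would translate the non-spanning condition into a concurrency statement. Every $2$-dimensional subspace of $H^0(X_7,-K)\cong H^0(\mathbb{P}^2,\mathcal{O}(1))$ is the kernel of the evaluation functional at a unique point $p\in\mathbb{P}^2$, and evaluation at $p$ kills $\gamma_i$ exactly when $p\in\ell_i$. Hence $27$ of the $\gamma_i$ lie in a common $2$-plane if and only if $27$ of the bitangents of $B$ pass through one point $p$, and it remains only to bound the number of concurrent bitangents.

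Third, the bound. If $p\in B$, a bitangent through $p$ meets $B$ with total multiplicity $4$ at its two tangency points, so $p$ must be one of them and the bitangent is the tangent line $T_pB$; thus at most one bitangent passes through $p$. If $p\notin B$, projection from $p$ is a degree-$4$ morphism $\pi_p\colon B\to\mathbb{P}^1$ from a smooth genus-$3$ curve, so Riemann--Hurwitz gives total ramification $2\cdot 3-2-4(-2)=12$, and each bitangent through $p$ contributes at least $2$; hence at most $6$ bitangents are concurrent. Since $6<27$, the lemma follows. I expect the only delicate point to be setting up the dictionary in the first step carefully ($h^0(-K)=3$, the double-cover description of a degree-$2$ del Pezzo, and the statement that $\phi^*\ell$ is reducible iff $\ell$ is a bitangent); the estimate in the last step is classical and in fact far stronger than needed, so one could instead simply quote that a smooth plane quartic has at most four concurrent bitangents.
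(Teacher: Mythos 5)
Your proof is correct, and the reduction in your first two steps---identifying the distinguished sections of $|-K|$ with the $28$ bitangents of the branch quartic $B$ of the anticanonical double cover, and translating failure of spanning into concurrency of $27$ bitangents---is exactly the paper's. Where you diverge is the final counting step. The paper works with the dual curve $B^*$: it is irreducible of degree $12$ with $28$ nodes (one per bitangent), so $27$ concurrent bitangents would place $27$ nodes of $B^*$ on the line $L^*$ dual to the pencil, forcing $L^*$ to split off from $B^*$ (by B\'ezout, since $27\cdot 2>12$) and contradicting irreducibility. You instead bound concurrent bitangents directly: for $p\notin B$ the projection $\pi_p\colon B\to\mathbb{P}^1$ has ramification degree $12$ by Riemann--Hurwitz and each bitangent through $p$ accounts for at least $2$ of it, giving at most $6$; for $p\in B$, at most one. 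Your route is more self-contained---it avoids the Pl\"ucker data ($\deg B^*=12$, the node and cusp counts) and the irreducibility of $B^*$---while delivering the same effective bound of $6$ concurrent bitangents that is implicit in the paper's B\'ezout count. Two small remarks: the parenthetical claim that a smooth plane quartic has at most four concurrent bitangents is not something to lean on without a reference (and you do not need it, since $6<27$ already closes the argument); and it is worth saying explicitly that distinct bitangents through $p$ have disjoint sets of tangency points (a tangency point $q$ determines its bitangent as the tangent line at $q$), so their contributions to the ramification divisor really do add.
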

\begin{proof}
Let $\phi: X_7 \rightarrow \pp^2$ be the double covering induced by $|-K|$. The image $\phi(E)$ of a $(-1)$-curve is a bitangent line to the branch divisor $B$ ( which is a smooth plane quartic ). The dual curve $B^*$ has degree 12 with 28 nodes ( the bitangents of $B$ ) and 24 cusps ( the flexes of $B$ ).
Now 
\[
	\phi^{-1}(\phi(E)) = E + E'  = -K
\]
so that if the sections $A_1A_1',\ldots,A_{27},A_{27}'$ span a 2 dimensional vector space, then all the $\phi(A_i)$'s are bitangent lines which live in a pencil. 
This would imply that $B^*$ has 27 nodes on a line $L^*$ ( the dual of the pencil ) so that $L^*$ splits off from $B^*$ and this is impossible because $B^*$ is irreducible.
\end{proof}

The following lemma carries out step $(1)$. We denote the dimension of a vector space $W$ by $|W|$.
\begin{lemma} \label{lem:DcontractsCurves} If $D\in {\rm Pic}(X_r)$ is nef, contracts some curve and $-K\cdot D\geq 3$ then 
\[
b_{1,D}(Cox(X_r))=0
\]
\end{lemma}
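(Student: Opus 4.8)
The plan is to reduce the statement to the induction hypothesis by identifying divisors that contract an exceptional curve with pullbacks from a smaller Del Pezzo surface. First I would use the fact that if $D$ is nef and $D\cdot C_j=0$ for some exceptional curve $C_j$, then $C_j$ can be contracted by a birational morphism $\rho\colon X_r\to X_{r-1}$ (here one must be slightly careful: the contractible exceptional curves are exactly those of self-intersection $-1$, and after contracting we land on $X_{r-1}$ by the last bullet in Section~\ref{sec:DelPezzo}, namely that the image of a Del Pezzo under a birational morphism is again Del Pezzo). The key geometric input is that a nef divisor $D$ with $D\cdot C_j=0$ satisfies $D=\rho^*D'$ for a nef divisor $D'$ on $X_{r-1}$, and that pullback induces an isomorphism $H^0(X_{r-1},D')\cong H^0(X_r,D)$ for all such divisors; moreover $-K_{X_{r-1}}\cdot D'=-K_{X_r}\cdot D\geq 3$, so the induction hypothesis applies to $D'$ on $X_{r-1}$.

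The subtlety is that the complex $\mathbb{A}(D)$ on $X_r$ uses \emph{all} the exceptional curves of $X_r$ as variables, not just the ones pulled back from $X_{r-1}$, so $b_{1,D}(\Cox(X_r))=0$ is not literally the same statement as $b_{1,D'}(\Cox(X_{r-1}))=0$. To bridge this I would argue as follows. Write $\mathcal{E}_0$ for the exceptional curves $E$ of $X_r$ with $D\cdot E=0$ and $\mathcal{E}_+$ for those with $D\cdot E>0$ (since $D$ is nef there are no others, and $C_j\in\mathcal{E}_0$). Given a cycle $\sigma=\sum s_iu_i$ in $\mathbb{A}_1(D)$, I first show that any component $s_iu_i$ with $C_i\in\mathcal{E}_0$ and $C_i\ne C_j$ is, modulo boundaries, expressible in terms of directions attached to $\mathcal{E}_+$ together with $u_j$: the point is that for $C_i\in\mathcal{E}_0$ the curves $C_i$ and $C_j$ are disjoint (two disjoint $(-1)$-curves both orthogonal to a nef $D$ that contracts both), and $H^1$ of the relevant divisor vanishes by Kawamata--Viehweg since the twisting divisor will be of the form $D-(\text{curves})$ which is still nef on the surface after the contraction—so Lemma~\ref{lem:surj} lets me trade $s_iu_i$ for $p u_j + (\text{stuff on }\mathcal{E}_+)$. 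Iterating, every cycle is congruent to one supported on $\mathcal{E}_+\cup\{C_j\}$.

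Now the remaining cycles $\sigma$ supported on $\mathcal{E}_+\cup\{C_j\}$ are controlled by $X_{r-1}$: the curves in $\mathcal{E}_+$ are exactly the strict transforms of the exceptional curves of $X_{r-1}$ not meeting the point being blown up, their sections are pullbacks, and $C_j$ corresponds to the variable we drop. Concretely, the subcomplex of $\mathbb{A}(D)$ spanned by $\mathcal{E}_+$-directions is isomorphic to $\mathbb{A}(D')$ for $X_{r-1}$ (using $H^0(X_r, D-\sum C_{i_k})\cong H^0(X_{r-1}, D'-\sum C'_{i_k})$, which again needs the disjointness/$H^1$-vanishing bookkeeping), so any cycle supported purely on $\mathcal{E}_+$ is a boundary by the induction hypothesis $b_{1,D'}(\Cox(X_{r-1}))=0$. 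For a cycle that genuinely involves $u_j$: its $u_j$-coefficient $s_j\in H^0(X_r,D-C_j)$, and $D-C_j$ still pairs nonnegatively enough, so one shows $s_j$ can be absorbed—either $s_j$ is forced to be divisible by a suitable $x_k$ and we apply the boundary relation $x_k u_j \equiv x_j u_k$ to push it off $u_j$, or a direct cohomology computation shows the $u_j$-component must vanish after reduction. Either way $\sigma$ becomes supported on $\mathcal{E}_+$ and we are done.

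The main obstacle I expect is precisely the bookkeeping in the two ``$\cong$'' claims relating cohomology on $X_r$ to cohomology on $X_{r-1}$: one needs that all the divisors of the form $D-C_{i_1}-\dots-C_{i_j}$ appearing in $\mathbb{A}(D)$, when the $C_{i_k}\in\mathcal{E}_0$, contribute nothing new—this requires repeated use of short exact sequences $0\to\Osh_X[-C-C']\to\Osh_X[-C]\oplus\Osh_X[-C']\to\Osh_X\to0$ for disjoint $(-1)$-curves together with vanishing of the relevant $H^1$, which on a Del Pezzo follows from the Kawamata--Viehweg vanishing theorem applied to nef divisors (or by the argument in Lemma~\ref{lem:nefandeffective}). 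Verifying that enough of these $H^1$'s vanish—equivalently that the intermediate divisors stay nef after contraction—is the technical heart, but it is a finite check organized by how many $\mathcal{E}_0$-curves a given divisor in the complex subtracts, and the nef/effective dictionary of Section~\ref{sec:DelPezzo} makes each case routine.
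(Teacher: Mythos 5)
Your opening reduction (contract the curve $C_j$ with $D\cdot C_j=0$ and pass to $X_{r-1}$, noting $H^0(X_r,D)\cong H^0(X_{r-1},D')$ and $-K_{X_{r-1}}\cdot D'=-K_{X_r}\cdot D$) matches the paper's starting point, but the cycle-reduction argument you build on top of it has two genuine gaps. First, you define the ``good'' directions as $\mathcal{E}_+=\{E: D\cdot E>0\}$ and then assert these are exactly the exceptional curves of $X_{r-1}$; that is false. The exceptional curves of $X_{r-1}$ correspond to the curves of $X_r$ \emph{disjoint from} $C_j$, which is a different set: $D$ may contract further curves disjoint from $C_j$ (these lie in your $\mathcal{E}_0$ yet do descend to $X_{r-1}$), while a curve meeting $C_j$ with $D\cdot E>0$ lies in your $\mathcal{E}_+$ yet does not descend. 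So the claimed isomorphism between the subcomplex on $\mathcal{E}_+$-directions and $\mathbb{A}(D')$ fails as stated. Second, and more seriously, your support-reduction step rests on $H^1$-vanishings that you never verify and that are not routine here: the Kawamata--Viehweg arguments used elsewhere in the paper (e.g.\ Lemma~\ref{lem:m_D>0}) require $N=D-C_1-C_2-C_3-K$ to be nef \emph{and big}, and the estimates there use $m_D\geq 1$; in the present lemma $m_D=0$ by hypothesis, so those bounds break down (for instance $D=2Q$ with $Q$ a conic bundle contracts two adjacent curves and several of the relevant $N$ are not big). Your final disposal of the $u_j$-component (``either $s_j$ is divisible by a suitable $x_k$ or a direct cohomology computation shows it vanishes'') is not an argument but a placeholder for one.

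The paper sidesteps all of this by not touching the complex $\mathbb{A}(D)$ in this case. It shows the map $k[V_{r-1}]/Q_{r-1}\rightarrow k[V_r]/Q_r$ is surjective in degree $D$ by rewriting any monomial involving a variable adjacent to $e_r$ via the conic-bundle quadrics (and, for $r=7$, via Lemma~\ref{27-curves} for the doubly adjacent curve $h_7$), and then runs a dimension count: the induction hypothesis $\Cox(X_{r-1})=k[V_{r-1}]/Q_{r-1}$ together with $h^0(X_r,D)=h^0(X_{r-1},D')$ forces $(k[V_r]/Q_r)_D=(k[V_r]/I_r)_D$, hence $I_r$ has no minimal generators in degree $D$. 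Note this uses the full inductive statement that $\Cox(X_{r-1})$ is the quadratic algebra, not merely $b_{1,D'}(\Cox(X_{r-1}))=0$. To repair your route you would need to (i) replace $\mathcal{E}_+$ by the set of curves disjoint from $C_j$, and (ii) actually exhibit valid capture moves for divisors with $m_D=0$, which is precisely the hard content your sketch defers to ``routine'' checking.
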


\begin{proof} 
By acting via the Weyl group and changing the presentation of the Del Pezzo surface we can assume, without loss of generality, that $D$ contracts the exceptional curve $e_r$ (i.e. $D\cdot e_r=0$). 
Let $X_{r-1}$ be the Del Pezzo surface obtained by blowing down $e_r$ and $V_{r-1}$ be the set of exceptional curves in $X_r$ which do not intersect $e_r$. Note that $V_{r-1}$ is the set of exceptional curves in $X_{r-1}$. Since $Q_{r-1}\subset Q_r\cap k[V_{r-1}]$ we have a homomorphism of graded $k$-algebras
\[
\psi:k[V_{r-1}]/Q_{r-1}\rightarrow k[V_r]/Q_r
\]

We will show that this map is surjective in multidegree $D$.  
Let $\{y_1,\dots, y_k\}=V_{r-1}$ and let $\{x_1,\dots, x_s\}$ be the exceptional curves in $V_r$ which intersect $e_r$. Let $m=y_1^{a_1}\cdots y_k^{a_k}x_1^{b_1}\cdots x_s^{b_s}e_r^t$ be a monomial in $k[V_r]$ of degree $D$, there are two cases to consider
\begin{itemize}
\item{If $r\leq 6$, then $D\cdot e_r=0$ implies that $t=b_1+\dots+b_s$ so that we can write $m=y_1^{a_1}\cdots y_k^{a_k}(x_1e_1)^{b_1}\cdots (x_se_1)^{b_s}$. Now, $Q_r$ contains relations of the form $x_je_r=\sum c_{ab}y_ay_b$ coming from the conic bundles $deg(x_i)+deg(e_1)$ so the monomials on the $y's$ span $(k[V_r]/Q_r)_D$.}
\item{If $r=7$ either $h_7$ does not appear in the monomial $m$ (and the same reasoning as when $r\leq 6$ shows that $m$ can be written as a linear combination of monomials on the $y_i$) or $h_7$ is one of the $x_i$, say $x_s=h_7$. In this case $D\cdot e_7=0$ implies that $t=b_1+\dots+b_{s-1}+2b_s$  and we can write $m=y_1^{a_1}\cdots y_k^{a_k}(x_1e_7)^{b_1}\cdots (x_{s-1}e_7)^{b_{s-1}}(h_7e_7^2)^{b_s}$.
Now $h_7e_7$ is a distinguished section of $-K$ so, by lemma~\ref {27-curves}, there is a relation of the form $h_7e_7=a_1A_1A_1'+\dots+a_3A_3A_3'$ so
\[
h_7e_7^2=a_1(A_1e_7)A_1'+\dots +a_3(A_3e_7)A_3'
\]
where we can relabel the curves so that the $A_i$ are adjacent to $e_7$ (recall that for every exceptional curve $B$ in $X_7$, $B':=-K-B$ is the picard class of an exceptional curve and exactly one of $B$ and $B'$ are adjacent to $e_7$). Hence $m$ is expressible as a linear combination of monomials which do not contain $h_7$ and the same argument as for $r\leq 6$ shows that the monomials on the $y's$ span $(k[V_r]/Q_r)_D$.}
\end{itemize}
We conclude that $\psi$ is surjective in degree $D$. By construction $k[V_r]/Q_r$ surjects onto $k[V_r]/I_r={\rm Cox}(X_r)$ and by induction on $r$ we know that ${\rm Cox}(X_{r-1})=k[V_{r-1}]/Q_{r-1}$. thus
\[
|{\rm Cox}(X_{r-1})_D|=|k[V_{r-1}]/Q_{r-1}|\geq |(k[V_r]/Q_{r})_D|\geq|k[V_r]/I_r|=|{\rm Cox}(X_r)_D|
\]
but $h^0(D,X_r)=h^0(D,X_{r-1})$ since $D$ contracts $e_r$ so $(k[V_r]/Q_r)_D=(k[V_r]/I_r)_D$ and $I_r$ cannot have minimal generators in multidegree $D$ with $-K\cdot D\geq 3$.    
\end{proof}

\section{Games on the graphs of exceptional curves}\label{games}
We will now introduce a combinatorial tool for studying the first homology of the complexes $\mathbb{A}(D)$. The term graph will mean finite graph without loops (multiple edges are allowed). 
\begin{definition}
For a graph $G$ let $V(G)$ be the set of vertices of $G$. If $S=\{v_1,\dots, v_k\}\subset V(G)$ the graph induced by $S$ is the graph $H$ with vertex set $S$ and with edge set given by all edges of $G$ whose endpoints are in $S$.
\end{definition}
\begin{definition} A capture diagram is a graph $H$ with $V(H)=\{a_1,a_2,c\}$. The distinguished vertex $c$ is called the captured vertex.\end{definition}
\begin{definition} A capture move for the diagram $H$ on the graph $G$ is a morphism of graphs
\[
\phi:H\rightarrow G
\] such that $H$ is isomorphic via $\phi$ to the subgraph induced by $\{\phi(a_1),\phi(a_2),\phi(c)\}$.
If $G'$ is the subgraph of $G$ induced by $V(G)-\phi(c)$ we say that $G$ can be captured from $G'$ with the move $(H,\phi)$ and denote it with $G'\rightarrow G$. 
\end{definition} 
\begin{definition} A graph $G$ is 2-capturable using the diagrams $\{H_1,\dots, H_s\}$ if there is a sequence of induced subgraphs
\[
G_0\rightarrow G_1\rightarrow G_2\rightarrow \dots \rightarrow G_k=G
\]
such that $|V(G_0)|\leq 2$ and for all $i$, $G_i\rightarrow G_{i+1}$ is a capture move for some diagram in $\{H_1,\dots, H_s\}$. We will call $G_0$ the initial or starting subgraph.
\end{definition}
We will be interested in the following graphs
\begin{definition} The graph of exceptional curves in $X_r$, denoted $G_r$ is the graph with vertices  the exceptional curves in $X_r$ and with $C_i\cdot C_j$ edges between vertices $C_i$ and $C_j$ (thus $G_r$ has multiple edges for $r\geq 7$).\end{definition}

We will now study the relationship between capturability and Picard Betti numbers,
\begin{definition} A move $\phi:H\rightarrow G_r$ on the graph of exceptional curves is valid for a divisor $D$ on $X_r$ if the map
\[
H^0(X_r,D-\phi(c)-\phi(a_1))\oplus H^0(X_r,D-\phi(c)-\phi(a_2))\rightarrow H^0(X_r,D-\phi(c))
\]
is surjective. That is, every section $s\in H^0(X_r,D-\phi(c))$ can be written as $s=s_1\phi(a_1)+s_2\phi(a_2)$
\end{definition}

\begin{theorem}\label{thm:games} If $G_r$ is $2$-capturable using moves that are valid for $D$ then 
\[
b_{1,D}({\rm Cox}(X_r))=0
\]
\end{theorem}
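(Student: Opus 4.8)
The plan is to reverse the capture sequence and use each valid move as a ``reduction move'' in the sense of Section~\ref{sec:method}: erase one vertex at a time from the support of a cycle modulo boundaries, until the support has size at most two, where Lemma~\ref{lem:easy} finishes the job.

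Concretely, I would fix a $2$-capturing sequence $G_0 \to G_1 \to \cdots \to G_k = G_r$ by moves valid for $D$, and for the $j$-th move write $c_j, a_j, b_j$ for the images under $\phi_j$ of the captured vertex and of the two other vertices of the diagram. By the definition of a capture move these are three distinct exceptional curves, $c_j \in V(G_j)$, and $V(G_{j-1}) = V(G_j)\setminus\{c_j\}$, so that $a_j, b_j \in V(G_{j-1})$. Consequently $V(G_r)$ is the disjoint union of $V(G_0)$ with $\{c_1,\dots,c_k\}$, and $a_j, b_j \in V(G_0)\cup\{c_1,\dots,c_{j-1}\}$ for every $j$.

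Starting from an arbitrary cycle $\sigma \in \mathbb{A}_1(D)$, I would build cycles $\sigma = \tau_k, \tau_{k-1}, \dots, \tau_0$, each obtained from the previous one by adding a boundary, maintaining the invariant $||\tau_j|| \subseteq V(G_j)$; the case $j=k$ is automatic since $||\sigma|| \subseteq V(G_r) = V(G_k)$. To pass from $\tau_j$ to $\tau_{j-1}$, write the $c_j$-component of $\tau_j$ as $s\,u_{c_j}$ with $s \in H^0(X_r, D - c_j)$. Validity of the $j$-th move lets me write $s = s_1 a_j + s_2 b_j$ with $s_1 \in H^0(X_r, D - c_j - a_j)$ and $s_2 \in H^0(X_r, D - c_j - b_j)$, and then
\[
s\,u_{c_j} \;=\; s_1 a_j\,u_{c_j} + s_2 b_j\,u_{c_j} \;\equiv\; s_1 c_j\,u_{a_j} + s_2 c_j\,u_{b_j} \pmod{\partial\mathbb{A}_2(D)},
\]
the difference of the two sides being $\partial(\pm s_1 u_{a_j c_j} \pm s_2 u_{b_j c_j})$, which is a legitimate element of $\partial\mathbb{A}_2(D)$ precisely because of the membership conditions on $s_1$ and $s_2$. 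Replacing $s\,u_{c_j}$ by the right-hand side inside $\tau_j$ produces $\tau_{j-1}$: it is again a cycle (it differs from $\tau_j$ only by a boundary), its $c_j$-component vanishes since $a_j, b_j \ne c_j$, and its support lies in $(V(G_j)\setminus\{c_j\})\cup\{a_j,b_j\} = V(G_{j-1})$. Iterating down to $j=1$ yields a cycle $\tau_0$ with $||\tau_0|| \subseteq V(G_0)$, hence $|\tau_0| \le 2$; by Lemma~\ref{lem:easy} it is a boundary, so $\sigma$ is a boundary, and therefore $H_1(\mathbb{A}(D)) = 0$, i.e.\ $b_{1,D}(\Cox(X_r)) = 0$.

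The step I expect to need the most care is not a computation but the bookkeeping: one must guarantee that erasing $c_j$ never resurrects an already-erased vertex. This is exactly why I would process the moves in reverse order, and why it is essential that the $G_i$ form a chain of \emph{induced} subgraphs, so that the target curves $a_j, b_j$ genuinely lie in $G_{j-1}$ — among the vertices not yet processed — rather than merely somewhere in $G_r$. Note that the edges of the capture diagrams play no role in this argument; they enter only later, when one checks via Lemma~\ref{lem:surj} which moves are valid for a given $D$.
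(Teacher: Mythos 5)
Your proposal is correct and follows essentially the same route as the paper's own proof: peel off the captured vertices in reverse order, using validity of each move to rewrite the component at the captured curve as a boundary-equivalent expression supported on the two capturing curves, and finish with Lemma~\ref{lem:easy} once the support sits inside the two-vertex starting subgraph. Your explicit bookkeeping (that $a_j,b_j\in V(G_{j-1})$ because the $G_i$ are induced subgraphs) is a point the paper leaves implicit, but the argument is the same.
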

\begin{proof}  Let $\sigma=\sum p_iu_i\in \bigoplus_{i=1}^nH^0(X_r,D-C_i)$ be a cycle. And assume
\[
S_0\rightarrow S_1\rightarrow\dots\rightarrow S_k=G_r
\]
is a sequence of capture moves with $|V(S_0)|\leq 2$. Let $\phi: H\rightarrow S_k$ be the last capture move and let $C_3=\phi(c)$, $C_1=\phi(a_1)$ and $C_2=\phi(a_2)$. Note that $C_1,C_2\in V(S_{k-1})$ and that there are $s_1,s_2$ such that $p_3=s_1x_1+s_2x_2$ (since the move $\phi$ is valid for $D$). Thus  
\[
\sigma=(\sum_{i\neq 3}p_iu_i)+p_3u_3=\sum_{i\neq 3}p_iu_i+(s_1x_1+s_2x_2)u_3
\]
Modulo boundaries the above expression equals
\[
\sum_{i\neq 3}p_iu_i+s_1x_3u_{1}+s_2x_3u_{2}
\]
which is a cycle whose support is contained in $V(S_{m-1})$. Continuing inductively we see that modulo boundaries $\sigma=\tau$ where $\tau$ is a cycle whose support is contained in $S_0$ (and hence of size $\leq 2$). By Lemma~\ref{lem:easy}, $\sigma$ is a boundary.
\end{proof}
We will use the Kawamata-Viehweg vanishing theorem to show that, if $D$ is sufficiently positive then there are enough capture moves to guarantee that $b_{1,D}(\Cox(X))=0$ via Theorem~\ref{thm:games}.

\section{Valid moves on the Del Pezzo surfaces $X_5(p)$ and $X_6(p)$}
We will use the following easily verified facts about the graph of exceptional curves on the Del Pezzo surfaces $X_5(p)$ and $X_6(p)$
\begin{enumerate}
\item{If $r=5$ and $A,B$ are two adjacent exceptional curves then there exist adjacent curves $A'$ and $B'$ such that the induced subgraph on $A,B,A',B'$ is a square and $A+B+A'+B'=-K$.  Moreover there are no triangles in $G_5$.}
\item{If $r=6$ and $A,B$ are two adjacent exceptional curves there is a unique curve $C$ such that $A+B+C=-K$. Moreover, the induced subgraph on $A,B,C$ is a triangle.}
\end{enumerate}

\begin{lemma} \label{lem:subgraphs}
Suppose $r=5,6$ and let $\{A,B,C\}$ be exceptional curves on $X_r$. If either
\begin{itemize}
\item{$A\cdot B=1$, $A\cdot C=B\cdot C=0$ and $r= 5$}
\item{$A\cdot B=B\cdot C=1$, $A\cdot C=0$ and $r= 6$}
\end{itemize}
Then every other exceptional curve intersects at most two curves in $\{A,B,C\}$.
\end{lemma}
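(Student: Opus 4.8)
The plan is to argue by contradiction. Suppose $F$ is an exceptional curve with $F\notin\{A,B,C\}$ that meets all three of $A,B,C$. Since $r\le 6$, the graph $G_r$ has no multiple edges, so every intersection number between two distinct exceptional curves is $0$ or $1$; in particular the hypothesis forces $F\cdot A=F\cdot B=F\cdot C=1$. It suffices to derive a contradiction from this configuration, and I would do this separately in the two cases, using the two structural facts recalled just above.

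When $r=5$ the argument is instantaneous: from $A\cdot B=1$ together with $F\cdot A=F\cdot B=1$, the vertices $A,B,F$ induce a triangle in $G_5$, contradicting the fact that $G_5$ contains no triangle. Hence $F$ cannot meet both $A$ and $B$, so it meets at most one of $\{A,B\}$ and therefore at most two of $\{A,B,C\}$.

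When $r=6$ I would invoke the triangle-completion fact: since $A\cdot B=1$ there is a unique exceptional curve $T$ with $A+B+T=-K$, and $\{A,B,T\}$ induces a triangle, so $T\cdot A=T\cdot B=1$. Intersecting the relation $-K=A+B+T$ with $C$ gives $T\cdot C=-K\cdot C-A\cdot C-B\cdot C=1-0-1=0$; since $F\cdot C=1$ this forces $F\ne T$, so $F$ and $T$ are distinct irreducible curves and $F\cdot T\ge 0$. On the other hand, intersecting $-K=A+B+T$ with $F$ yields $1=-K\cdot F=F\cdot A+F\cdot B+F\cdot T=2+F\cdot T$, hence $F\cdot T=-1$, a contradiction. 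Thus no exceptional curve outside $\{A,B,C\}$ meets all three of them, which is the assertion.

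I do not anticipate any real obstacle: once the two combinatorial facts about $G_5$ and $G_6$ are in hand, each case collapses to a single intersection-number computation. The only point that needs a moment's attention is the verification that $F\ne T$ in the $r=6$ case, and it is precisely there that the asymmetric hypothesis $A\cdot C=0$, $B\cdot C=1$ (as opposed to the symmetric hypothesis used when $r=5$) is needed.
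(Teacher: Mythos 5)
Your proof is correct and follows essentially the same route as the paper's: the $r=5$ case is exactly the absence of triangles in $G_5$, and the $r=6$ case rests on the triangle $\{A,B,T\}$ with $A+B+T=-K$. The only cosmetic difference is that where the paper invokes uniqueness of the triangle through the edge $AB$, you derive $F\cdot T=-1$ directly by intersecting $-K=A+B+T$ with $F$ (after ruling out $F=T$ via $T\cdot C=0$); both arguments reduce to the same structural facts recorded just before the lemma.
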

\begin{proof} For $r\leq 5$ the graph of exceptional curves contains no triangles. For $r=6$ every edge of this graph belongs to exactly one triangle since two distinct lines in $\mathbb{P}^3$ span a hyperplane which intersects a cubic surface in a curve of degree $3$ containing two lines, (i.e. a triangle) thus determining the third line. 
\end{proof}

\begin{lemma} \label{lem:m_D>0} For $r=5,6$ let $D$ be a divisor in $X_r$ with $m_{D}\geq 1$. Then the moves in Lemma~\ref{lem:subgraphs} are valid for $D$.\end{lemma}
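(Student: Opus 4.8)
### Proof proposal for Lemma~\ref{lem:m_D>0}

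The plan is to verify, for each of the two configurations $\{A,B,C\}$ from Lemma~\ref{lem:subgraphs}, that the capture move contracting the middle vertex is valid for $D$, i.e.\ that
\[
H^0(X_r,D-C-A)\oplus H^0(X_r,D-C-B)\rightarrow H^0(X_r,D-C)
\]
is surjective. In both cases the captured curve $C$ is disjoint from one of the other two ($A\cdot C=0$), so Lemma~\ref{lem:surj} applies directly: it reduces the desired surjectivity to the vanishing $H^1(X_r,D-A-C-B)=0$ (with $C_1=C$, $C_2=A$, $C_3=B$ in the notation there; note $A$ and $C$ are the disjoint pair). So the whole lemma comes down to a single cohomology vanishing statement on a surface.

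First I would reduce $H^1$ vanishing to a positivity statement via Kawamata–Viehweg (or just Ramanujam/Mumford vanishing on a surface): on a Del Pezzo surface, $H^1(X_r,N)=0$ whenever $N-K$ is nef and big, equivalently whenever $N+(-K)$ meets every exceptional curve nonnegatively and has positive self-intersection — or, even more cheaply, whenever $N$ itself is nef, using $H^1(X_r,N)=0$ for $N$ nef on a Del Pezzo (this is standard and also follows from the fact that nef divisors are effective, Lemma~\ref{lem:nefandeffective}, together with vanishing for the relevant twists). So it suffices to show $N:=D-A-B-C$ is nef, i.e.\ $N\cdot F\geq 0$ for every exceptional curve $F$. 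Writing $m_D\geq 1$, I have $D\cdot F\geq 1$ for all exceptional $F$, so I need $(A+B+C)\cdot F\leq 1$ for every exceptional curve $F$ — but that is \emph{exactly} the content of Lemma~\ref{lem:subgraphs}: every other exceptional curve meets at most two of $\{A,B,C\}$, and I must also check the contributions are each at most $1$ (true since distinct exceptional curves on $X_r$, $r\le 6$, meet in at most one point) and handle $F\in\{A,B,C\}$ separately.

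The case $F\in\{A,B,C\}$ needs a short separate computation. For $F=A$: $(A+B+C)\cdot A = A^2 + B\cdot A + C\cdot A = -1 + 1 + 0 = 0$ in both configurations, so $N\cdot A = D\cdot A \geq 1 > 0$. For $F=B$ ($r=6$ case): $(A+B+C)\cdot B = 1 + (-1) + 1 = 1$, giving $N\cdot B = D\cdot B - 1 \geq 0$. For $F=C$: symmetric to $F=A$, giving $N\cdot C = D\cdot C \geq 1$. For the $r=5$ square configuration one argues the same way for $F\in\{A,B,C\}$, using $A\cdot C = B\cdot C = 0$, $A\cdot B = 1$. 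For all remaining exceptional $F$, Lemma~\ref{lem:subgraphs} gives $(A+B+C)\cdot F \leq 2$, but I can do better: $F$ meets at most two of the three, and if it met exactly two the induced subgraph on those two plus $F$ would be constrained (no triangles when $r=5$; at most one triangle per edge when $r=6$) — in fact the cleanest route is to note $A+B+C$ is (a summand of) $-K$ in the $r=6$ case and $A+B+A'+B'=-K$ in the $r=5$ case, so $(A+B+C)\cdot F \leq (-K)\cdot F = 1$ when $r=6$ directly, and when $r=5$ one has $(A+B)\cdot F = (-K - A' - B')\cdot F \le 1$ since $A', B'$ are effective. This last observation is really the engine: rewriting the sum in terms of $-K$ turns the nefness check into $(-K)\cdot F = 1$.

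The main obstacle I anticipate is the bookkeeping in the $r=5$ case, where the natural "complete to $-K$" trick involves a \emph{square} $A+B+A'+B'=-K$ rather than a triangle, so $A+B$ is only part of $-K$ and I must argue $N\cdot F = D\cdot F - (A+B)\cdot F \geq 1 - (-K)\cdot F + (A'+B')\cdot F = (A'+B')\cdot F \geq 0$, which is automatic since $A',B'$ are effective — but one has to be careful that $C$ does not reappear, i.e.\ that in the $r=5$ statement of the lemma the divisor being subtracted is $A+B+C$ with $C$ disjoint from both, so $N = D - A - B - C$ and I need $(A+B)\cdot F + C\cdot F \le D\cdot F$; splitting off $C\cdot F$ (at most $1$, and zero whenever $F\cdot A$ or $F\cdot B$ is forced to be $1$ by the no-triangle condition) is the fiddly part. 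Once the nefness of $N$ is established in all cases, $H^1(X_r,N)=0$ follows and Lemma~\ref{lem:surj} finishes the proof.
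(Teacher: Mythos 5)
Your reduction to the vanishing $H^1(X_r,D-A-B-C)=0$ via Lemma~\ref{lem:surj} is right, but the positivity argument you commit to does not work. You propose to show that $N:=D-A-B-C$ is itself nef, which would require $(A+B+C)\cdot F\leq 1$ for every exceptional curve $F$; Lemma~\ref{lem:subgraphs} only gives $(A+B+C)\cdot F\leq 2$, and the bound $2$ is actually attained. Concretely, for $r=6$ take $D=-K$ (which has $m_D=1$), $A=e_1$, $B=f_{12}$, $C=e_2$ and $F=g_3$: then $F\cdot A=F\cdot C=1$, $F\cdot B=0$, so $N\cdot F=1-2=-1$ and $N$ is not nef. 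A similar failure occurs for $r=5$ with $A=e_1$, $B=f_{12}$, $C=e_3$, $F=f_{13}$. Your attempted patches are also incorrect: in the $r=6$ configuration of Lemma~\ref{lem:subgraphs} the curves form a \emph{path} ($A\cdot C=0$), not a triangle, so $A+B+C\neq -K$ and is not even a summand of $-K$ (the difference $-K-(A+B+C)$ has anticanonical degree $0$ but is nonzero, hence not effective); and in the $r=5$ case the inequality $(-K-A'-B')\cdot F\leq (-K)\cdot F$ cannot be deduced merely from effectivity of $A'+B'$, since effective divisors may meet exceptional curves negatively, and in any case the residual term $C\cdot F$ is not controlled by the no-triangle condition (the example above has $F$ meeting both $A$ and $C$).

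The paper avoids this by working with $L:=D-A-B-C-K$ instead: the extra $-K$ contributes $+1$ against every exceptional curve, so $L\cdot F\geq 1-2+1\geq 0$ and $L$ is nef, and Kawamata--Viehweg in the form $H^1(K+L)=0$ for $L$ nef and \emph{big} gives exactly the desired vanishing of $H^1(D-A-B-C)$. This leaves the bigness $L^2>0$ to be checked, which your proposal never addresses and which is the second substantive half of the paper's proof: one rewrites $-K-A-B$ using the square $A+B+A'+B'=-K$ (for $r=5$) or the triangle $A+B+C'=-K$ (for $r=6$) to get $L=D-C+A'+B'$ resp.\ $L=D-C+C'$, and then uses nefness of $D-C$ to bound $L^2$ from below. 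You did mention the criterion ``$H^1(N)=0$ whenever $N-K$ is nef and big'' in passing, but you then discarded it for the cheaper nefness claim, so as written the proof has a genuine gap at its central step.
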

\begin{proof} let $L=D-A-B-C-K$. By Lemma~\ref{lem:subgraphs} any exceptional curve $F$ in $X_r$ has $(A+B+C)\cdot F\leq 2$, thus $L\cdot F\geq 1-2+1\geq 0$ and $L$ is nef and effective. We will show that $L$ is also big so that, by Kawamata-Viehweg $H^1(X_r,L)=0$. The validity for $D$ of the moves will then follow from Lemma~\ref{lem:surj}. We show that $L^2>0$ by consider two cases:  
\begin{itemize}
\item{  If $r=5$, $A\cdot B=1$ and $B\cdot C=A\cdot C=0$, let $A'$ and $B'$ be two exceptional curves such that $-K=A+B+A'+B'$ and note that these curves form a square. Thus $L=D-C+A'+B'$ and $L^2=(D-C)^2+2(D-C)(A'+B')$. Now $D-C$ is nef and effective (since $m_D\geq 1$) and $(D-C)(A'+B')\geq 1$ (since no (-1)-curve intersects $A'$ and $B'$ simultaneously). Hence $L^2>0$}
\item{ If $r=6$ and $A\cdot B=B\cdot C=1$ let $C'$ be the unique (-1)-curve such that $-K=A+B+C'$ and note that these curves form a triangle and that $C\cdot C'=0$ (by lemma~\ref{lem:subgraphs}). Thus $L=D-C+C'$ and $L^2=(D-C)^2+2(D-C)C'-1\geq 0+2-1=1$ since $D-C$ is nef and effective.}
\end{itemize}
\end{proof}

\begin{lemma} \label{lem:capture} For $r=5,6$ the graphs $G_r$ are $2$-capturable with starting subgraph $S_0=\{e_1,e_2\}$ using the capture moves of Lemma~\ref{lem:subgraphs}\end{lemma}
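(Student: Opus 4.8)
The plan is to verify the claimed $2$-capturability by hand, producing for each of $r=5$ and $r=6$ an explicit order in which the exceptional curves of $X_r$ get captured, starting from $S_0=\{e_1,e_2\}$. Concretely I will list the curves as $v_1=e_1, v_2=e_2, v_3,\dots,v_N$ (with $N=16$ when $r=5$ and $N=27$ when $r=6$) and, for each $m\ge 3$, exhibit two indices $i,j<m$ such that the subgraph of $G_r$ induced on $\{v_i,v_j,v_m\}$ is one of the configurations of Lemma~\ref{lem:subgraphs}, with $v_m$ the captured vertex. Since $e_1\cdot e_2=0$, the pair $\{e_1,e_2\}$ is an admissible starting subgraph, so such a list is precisely what the definition of $2$-capturability requires. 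The only facts used are the intersection numbers of exceptional curves coming from Lemma~\ref{lem:DelPezzobasics} and Table~\ref{tab:-1curves1}: the divisors $e_i$ are mutually disjoint; $f_{jk}$ meets $e_i$ iff $i\in\{j,k\}$; two lines $f_{jk}$ and $f_{lm}$ meet iff $\{j,k\}\cap\{l,m\}=\emptyset$; on $X_5$ the conic $q=2L-E_1-\cdots-E_5$ meets every $e_i$ and no line $f_{jk}$; and on $X_6$ the conic $g_k$ meets $e_i$ iff $i\ne k$, meets $f_{ij}$ iff $k\in\{i,j\}$, and is disjoint from $g_l$ for every $l\ne k$.

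I first record which vertex of each configuration of Lemma~\ref{lem:subgraphs} is captured. By the definition of a valid move together with Lemma~\ref{lem:surj} (applied via Lemma~\ref{lem:m_D>0}), the curves $\phi(a_1)$ and $\phi(a_2)$ must be disjoint. Hence for $r=6$, where the configuration is the induced path $A-B-C$, the captured vertex is the central curve $B$; and for $r=5$, where the configuration is an edge $A-B$ together with the isolated curve $C$, the captured vertex is one of the endpoints $A,B$, with $C$ always one of the two partners. So for $r=6$ a curve $v$ becomes capturable as soon as two already-captured curves $w_1,w_2$ satisfy $w_1\cdot w_2=0$ and $v\cdot w_1=v\cdot w_2=1$; for $r=5$, as soon as two already-captured curves $w_1,w_2$ satisfy $w_1\cdot w_2=0$, $v\cdot w_1=1$ and $v\cdot w_2=0$.

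For $r=6$ I propose the order $e_1,e_2$; then $f_{12}$ with partners $e_1,e_2$; then $g_3,g_4,g_5,g_6$, each with partners $e_1,e_2$ (these are disjoint, and every such conic meets both since its index lies outside $\{1,2\}$); then $e_3,e_4,e_5,e_6$, taking for $e_j$ two of the pairwise-disjoint conics $g_k,g_l$ with $k,l\in\{3,4,5,6\}\setminus\{j\}$ as partners (both met by $e_j$ since $k,l\ne j$); then $g_1,g_2$, with partners $\{e_2,e_3\}$ and $\{e_1,e_3\}$ respectively; and finally the remaining fourteen lines $f_{ij}$, each with partners $e_i,e_j$ (disjoint, and both met by $f_{ij}$). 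For $r=5$ I propose the order $e_1,e_2$; then the six lines $f_{1j},f_{2j}$ with $j=3,4,5$ (for $f_{1j}$ use $e_1,e_2$: $f_{1j}$ meets $e_1$, is disjoint from $e_2$, and $e_1\cdot e_2=0$; symmetrically for $f_{2j}$); then $e_3,e_4,e_5$ (for $e_j$ use $f_{1j},e_2$: $e_j$ meets $f_{1j}$, is disjoint from $e_2$, and $f_{1j}\cdot e_2=0$); then $f_{34},f_{35},f_{45}$ (for $f_{kl}$ use $e_k,e_1$: $f_{kl}$ meets $e_k$, is disjoint from $e_1$ since $1\notin\{k,l\}$, and $e_k\cdot e_1=0$); then $f_{12}$ with partners $e_1,e_3$; and finally the conic $q$ with partners $e_1,f_{23}$ ($q$ meets $e_1$, is disjoint from $f_{23}$, and $e_1\cdot f_{23}=0$).

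The rest is a routine finite check: for each entry of the two lists one verifies the (at most three) intersection numbers just quoted, which simultaneously confirms that the named partners appear earlier in the list and that the induced subgraph on the three curves is the required configuration of Lemma~\ref{lem:subgraphs}. I do not expect any real obstacle; the only delicate point is the \emph{choice of ordering}, and the lists are arranged so that the large independent sets $\{e_1,\dots,e_r\}$ and (for $r=6$) the disjoint conics $\{g_1,\dots,g_6\}$ become available early and can then serve as partners for all the remaining curves. Combined with Lemma~\ref{lem:m_D>0} and Theorem~\ref{thm:games}, this gives $b_{1,D}(\Cox(X_r))=0$ for every divisor $D$ on $X_5$ or $X_6$ with $m_D\ge 1$.
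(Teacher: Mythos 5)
Your proof is correct and takes essentially the same approach as the paper, which likewise just exhibits an explicit capture order starting from $S_0=\{e_1,e_2\}$ (the paper only lists the curves captured at each stage, while you additionally name and verify the two partners for every capture, including the correct identification of the captured vertex as the middle of the path for $r=6$ and an endpoint of the edge for $r=5$). The intersection numbers you quote and the resulting orderings all check out, so this is a complete version of the paper's argument.
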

\begin{proof} We will list the exceptional curves captured at every stage (see Table~\ref{tab:-1curves1} for the interpretation of the variables)
\\ 
For $r=5$\\
\[
\begin{array}{l|l}
S_0 &  e_1,e_2\\
 & f_{13}, f_{14}, f_{15}, f_{23}, f_{24}, f_{25}\\
 & g, e_3, e_4, e_5, f_{12}, f_{34}, f_{35}, f_{45}\\
\end{array}
\] 
For $r=6$\\ 
\[
\begin{array}{l|l}
S_0 &  e_1,e_2\\
 & f_{12}, g_3, g_4, g_5, g_6\\
 & \text{ All remaining curves}\\
\end{array}
\]
\end{proof}
\begin{lemma} \label{lem:X6mD0} For $r=5,6$ and every divisor $D$ with $m_D\geq 1$ 
\[
b_{1,D}({\rm Cox}(X_r))=0
\]
\end{lemma}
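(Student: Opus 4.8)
The proof of Lemma~\ref{lem:X6mD0} is now essentially immediate from the machinery that has been assembled, so the plan is simply to chain together the preceding results. The statement asserts $b_{1,D}(\Cox(X_r))=0$ for $r=5,6$ whenever $m_D\geq 1$, i.e. whenever $D$ meets every exceptional curve positively. By Theorem~\ref{thm:games}, it suffices to exhibit a $2$-capture of $G_r$ by moves that are all valid for this particular $D$.

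First I would invoke Lemma~\ref{lem:capture}, which gives an explicit $2$-capture of $G_r$ (for $r=5$ and $r=6$) with starting subgraph $S_0=\{e_1,e_2\}$, using only the capture moves described in Lemma~\ref{lem:subgraphs}. Next I would observe that the hypothesis $m_D\geq 1$ is exactly the hypothesis of Lemma~\ref{lem:m_D>0}, which tells us that \emph{every} move of the type appearing in Lemma~\ref{lem:subgraphs} is valid for $D$. In particular, all the moves used in the $2$-capture of Lemma~\ref{lem:capture} are valid for $D$. Therefore $G_r$ is $2$-capturable using moves valid for $D$, and Theorem~\ref{thm:games} yields $b_{1,D}(\Cox(X_r))=0$, as claimed.

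There is no real obstacle here: the lemma is a packaging statement whose content has been pushed entirely into Lemmas~\ref{lem:subgraphs}, \ref{lem:m_D>0}, \ref{lem:capture} and Theorem~\ref{thm:games}. The only point worth a sentence of care is making sure the capture moves listed in the proof of Lemma~\ref{lem:capture} are literally of the combinatorial shape to which Lemma~\ref{lem:m_D>0} applies (a pair of adjacent curves plus a third curve in the configuration of Lemma~\ref{lem:subgraphs}); this is built into how Lemma~\ref{lem:capture} was phrased, so nothing further is needed. The short proof would read roughly: ``By Lemma~\ref{lem:capture}, $G_r$ is $2$-capturable using the moves of Lemma~\ref{lem:subgraphs}. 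Since $m_D\geq 1$, Lemma~\ref{lem:m_D>0} shows each such move is valid for $D$. Hence $G_r$ is $2$-capturable using moves valid for $D$, and Theorem~\ref{thm:games} gives $b_{1,D}(\Cox(X_r))=0$.''
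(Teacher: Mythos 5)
Your proposal is correct and matches the paper's proof exactly: the paper likewise deduces the lemma immediately from Theorem~\ref{thm:games} together with Lemmas~\ref{lem:m_D>0} and~\ref{lem:capture}. Nothing further is needed.
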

\begin{proof} This follows immediately from Theorem~\ref{thm:games} and Lemmas~\ref{lem:m_D>0} and~\ref{lem:capture}.
\end{proof}
Note that for $r=5,6$ there are infinitely many nonisomorphic Del Pezzo surfaces (depending on the choice of coordinates for the blown up points) and thus infinitely many nonisomorphic Cox rings (since the surface can be recovered from the ring), the last Lemma shows that all these have no minimal generators in degrees $D$ with $m_D\geq 1$ simultaneously.

In the next section we will use the same argument on the Del Pezzo surfaces $X_7$. We have chosen to write it in a different section in order to simplify the exposition.

\section{Valid moves on the Del Pezzo surfaces $X_7(p)$}
We will use the following easily verified facts about exceptional curves on a Del Pezzo surface $X_7$.
\begin{enumerate}
\item{The anticanonical divisor determines a fixed point free involution on the set of exceptional curves via $A\rightarrow A'=-K-A$. We will call $A'$ the dual of $A$. Thus the $h_i$ and the $e_i$'s are duals of each other and so are $f_{ij}$ and the $g_{ij}$. Moreover note that two exceptional curves $A$ and $B$ have the property that $A\cdot B\geq 2$ iff $B=A'$ and in that case $A\cdot B=2$. Thus the graph of exceptional curves $G_7$ is a multigraph in which every vertex belongs to exactly one double edge. }
\item{$A$ is adjacent to $B$ iff $A'$ is disjoint from $B$}
\item{There are no triangles containing a double edge.}
\item{The sum of the curves in any triangle is $-K+V$ for some exceptional curve $V$. The sum of two doubly adjacent curves is $-K$}
\item{${\rm diam}(G_7)=2$}
\end{enumerate}
We will divide the nef and effective divisors which do not contract curves into classes according to the value of $m_D$.
It will be sufficient to study two cases: $m_D\geq 2$ and $m_D=1$. 

\subsection{ Divisors $D$ with $m_D\geq 2$.}

\begin{figure}[h]
\begin{center}
\scalebox {0.75}{
\includegraphics{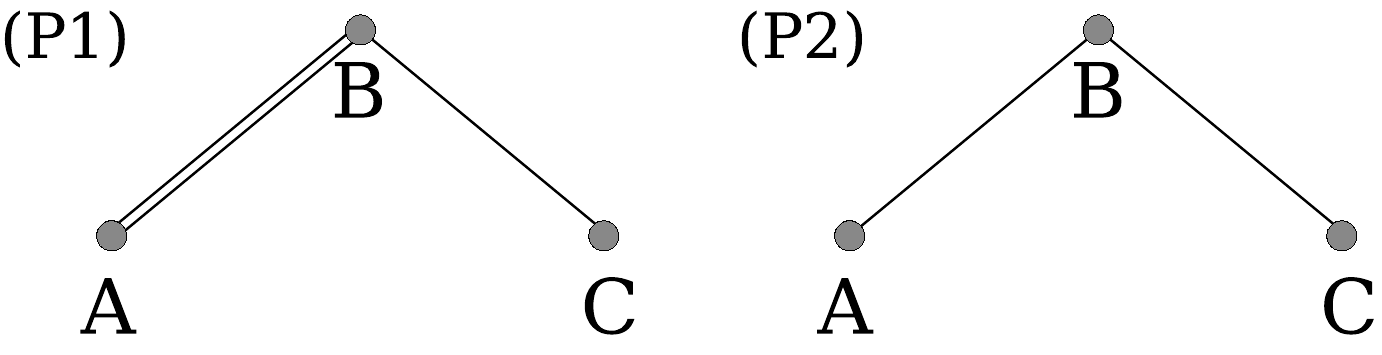}
}
\caption{Valid moves for $r=7$ and $m_D\geq 2$}
\label{fig:moves0}
\end{center}
\end{figure}
\begin{lemma} \label{lem:configs0}
 If $A, B$ and $C$ are exceptional curves in $X_7$ then the following capture moves (see Figure~\ref{fig:moves0}) are valid for any divisor $D$ with $m_D\geq 2$.
\begin{itemize}
\item{(P1): $B=A'$}
\item{(P2): $B\cdot C=B\cdot A=1$}
\end{itemize}
\end{lemma}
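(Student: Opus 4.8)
The plan is to reduce both capture moves (P1) and (P2) to an application of Lemma~\ref{lem:surj} (the surjectivity criterion). Recall that Lemma~\ref{lem:surj} says that if $\phi(c)=C_3$ is disjoint from $\phi(a_1)=C_1$, then the relevant map is surjective provided $H^1(X_7, D-C_1-C_2-C_3)=0$; so in each case I must exhibit the two curves playing the roles of $C_1, C_2$ in the lemma, check the disjointness hypothesis, and then verify the $H^1$-vanishing via Kawamata–Viehweg. For this it suffices to show the divisor $L := D-C_1-C_2-C_3$ is nef and big.

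First I would set up the bookkeeping. For (P1) we have $B=A'$, so $A\cdot B=2$, and the captured vertex is (say) $B$; in Lemma~\ref{lem:surj} we need one of the two curves $\phi(a_1),\phi(a_2)$ disjoint from $B=\phi(c)$. Looking at Figure~\ref{fig:moves0} the diagram $H$ has its two non-captured vertices both adjacent to $B$ in the appropriate way, and I expect the intended choice is $C_1=A$ (disjoint from $B$? no — $A\cdot B=2$), so more likely the diagram uses two curves meeting $B$ once and one of them, say $C_2$, disjoint from one of the others; I would read off from Figure~\ref{fig:moves0} exactly which two curves $C_1,C_2$ are used and which disjointness among $\{C_1,C_2,C_3\}$ holds (using facts (1)–(2): $A$ adjacent to $X$ iff $A'$ disjoint from $X$, and every vertex lies on exactly one double edge). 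For (P2), where $B\cdot C=B\cdot A=1$, the captured vertex should be $B$, and $C_1,C_2$ would be $A$ and $C$ (or their duals), with the disjointness $A\cdot C=0$ holding generically; again I'd pin this down from the figure.

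The core estimate, and the main obstacle, is showing $L=D-C_1-C_2-C_3$ is nef and big when $m_D\geq 2$. For nefness: for any exceptional curve $F$ we have $D\cdot F\geq m_D\geq 2$, so $L\cdot F\geq 2-(C_1+C_2+C_3)\cdot F$, and I need $(C_1+C_2+C_3)\cdot F\leq 3$ for all exceptional $F$, with a bit of care when one of the $C_i$ equals $F$ (then $C_i\cdot F=-1$, which helps). Using facts (3)–(4) about triangles and double edges in $G_7$ — a triangle sums to $-K+V$, a double edge sums to $-K$ — one can bound $(C_1+C_2+C_3)\cdot F$: e.g. for (P2) the three curves involved relate to a triangle, so their sum is $-K+V$ and $(C_1+C_2+C_3)\cdot F=(-K+V)\cdot F=1+V\cdot F\leq 1+2=3$, giving $L\cdot F\geq 2-3=-1$, which is not quite enough, so I'd need to exploit that $V\cdot F=2$ forces $F=V'$ and handle that case separately, or rather observe $L\cdot F\geq 0$ unless $F$ is in a very restricted position, where a direct check closes it. For bigness I would compute $L^2$ using $\deg_{-K}$: writing $-K\cdot L = -K\cdot D - (-K)\cdot(C_1+C_2+C_3) = (\text{something}\geq 4) - 3 \geq 1$, combined with $L$ nef, and then Hodge-index-type reasoning (a nef divisor with $-K\cdot L>0$ on a Del Pezzo surface is big once $L\neq 0$), or a direct $L^2>0$ computation by substituting the triangle/double-edge relations as in the proof of Lemma~\ref{lem:m_D>0}. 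Once $L$ is nef and big, Kawamata–Viehweg gives $H^1(X_7,L)=0$ (since $L-K$ is nef and big, $-K$ being ample), and Lemma~\ref{lem:surj} yields validity. I expect the nef-bigness verification, with its several boundary cases depending on the combinatorics of which $F$ coincides with or is dual to the captured configuration, to be the delicate part; the rest is formal.
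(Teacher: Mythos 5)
Your overall skeleton (reduce each move to Lemma~\ref{lem:surj}, then obtain the required $H^1$-vanishing from Kawamata--Viehweg via a nef-and-big verification) is exactly the paper's, but your central estimate is aimed at the wrong divisor, and this is a genuine gap rather than a boundary case to be patched. Kawamata--Viehweg gives $H^1(X_7,L)=0$ when $L-K$ is nef and big, so the divisor to control is $N=L-K=D-A-B-C-K$, not $L$ itself; you state the correct condition in your last sentence but then try to deduce it from nefness of $L$. The extra $-K$ contributes $+1$ to the intersection with every exceptional curve, which is precisely what closes the gap you ran into: writing $D=-2K+G$ with $G$ nef (possible since $m_D\geq 2$), one gets $N\cdot H=3+G\cdot H-(A+B+C)\cdot H\geq 3-3=0$, once one checks $H\cdot(A+B+C)\leq 3$ (true in both configurations because $G_7$ has no triangle containing a double edge). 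By contrast $L$ itself is genuinely not nef: for $D=-2K$ and the (P2) configuration $A=e_1$, $B=f_{12}$, $C=e_2$ one has $h_1\cdot(A+B+C)=2+0+1=3$, hence $L\cdot h_1=2-3=-1$, so no case analysis on ``restricted positions'' of $F$ will rescue your version of the nefness claim. Your fallback for bigness is also unsound: a nonzero nef divisor with $-K\cdot L>0$ need not be big on a Del Pezzo surface (a conic bundle $Q$ has $Q^2=0$ and $-K\cdot Q=2$). The paper instead computes $N^2=(G+A'+B'+C')^2\geq (A'+B'+C')^2=(A+B+C)^2\geq 1$, using that the duality $E\mapsto -K-E$ preserves intersection numbers among exceptional curves.

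A smaller but real loose end: you never settle which two curves play the role of the disjoint pair in Lemma~\ref{lem:surj}. In both moves the captured vertex is $B$ and the capturing pair is $\{A,C\}$ with $A\cdot C=0$; in (P1) the curve $B=A'$ then necessarily meets $C$ once, and in (P2) the three curves form a path $A$--$B$--$C$. This bookkeeping must be fixed before the bound $H\cdot(A+B+C)\leq 3$ can even be stated and verified.
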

\begin{proof} By Kawamata-Viehweg and Lemma~\ref{lem:surj} it suffices to show that the divisor $N=D-A-B-C-K=-3K+G-(A+B+C)$ is nef and big. 

If $H$ is any exceptional curve and $A,B,C$ are curves in one of the above configurations then $H\cdot(A+B+C)\leq 3$ (since the graph of exceptional curves does not contain triangles with multiple edges), so $N\cdot H\geq 3-3=0$. 

To show that $N$ is big we compute
\[
N^2=(-3K+G-A-B-C)=(G+A'+B'+C')^2\geq (A'+B'+C')^2
\]
Now, the subgraph spanned by $A', B'$ and $C'$ is isomorphic to the subgraph spanned by $A, B$ and $C$ so $(A'+B'+C')^2\geq 1$.\end{proof}

\begin{lemma} \label{lem:F^2=0} The graph $G_7$ is $2$-capturable with the above moves starting with the subgraph $e_1,e_2$.
\end{lemma}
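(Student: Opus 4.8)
The plan is to build $G_7$ up from $\{e_1,e_2\}$ by an explicit, three-stage sequence of capture moves, checking at each step that the three curves involved span, inside $G_7$, a subgraph isomorphic to the diagram (P1) or (P2) of Lemma~\ref{lem:configs0}. Every such check is mere bookkeeping with the intersection form of Lemma~\ref{lem:DelPezzobasics} and the structural facts (1)--(5) recorded above; concretely I will use that $e_i\cdot e_j=0$ and $h_i\cdot h_j=0$ for $i\neq j$, that $e_i'=h_i$ (hence $e_i\cdot h_i=2$ while $e_i\cdot h_j=1$ for $i\neq j$), that $f_{ij}\cdot e_a=1$ exactly when $a\in\{i,j\}$, and that $g_{ij}\cdot e_a=1$ exactly when $a\notin\{i,j\}$. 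Note that in either diagram the two anchors $\phi(a_1),\phi(a_2)$ form a disjoint pair, while $\phi(c)$ is the captured vertex.

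First I would run Stage~1. Since $e_1\cdot e_2=0$, the triple $\{e_1,h_1,e_2\}$ has a double edge $e_1h_1$, a single edge $h_1e_2$ and no edge $e_1e_2$, so it is an instance of (P1) with disjoint anchors $e_1,e_2$ and captured vertex $h_1=e_1'$; thus $\{e_1,e_2\}\to\{e_1,e_2,h_1\}$, and, exchanging the roles of $e_1$ and $e_2$, the next move captures $h_2$. Now $\{h_1,h_2\}$ is a second disjoint pair. In Stage~2, for each $k\in\{3,\dots,7\}$ the triple $\{h_1,e_k,h_2\}$ is a path with single edges (as $e_k\cdot h_1=e_k\cdot h_2=1$ and $h_1\cdot h_2=0$), i.e.\ an instance of (P2) with anchors $h_1,h_2$; capturing these in turn we obtain a subgraph containing all seven pairwise disjoint exceptional divisors $e_1,\dots,e_7$. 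In Stage~3, the intersection numbers above show that every exceptional curve $C$ not yet captured meets at least two of the $e_i$ transversally: $f_{ij}$ meets $e_i$ and $e_j$; $g_{ij}$ meets the five $e_a$ with $a\notin\{i,j\}$; and $h_k$ meets the six $e_a$ with $a\neq k$. Picking two such indices $a,b$ --- taking $a,b\neq k$ when $C=h_k$, so that $C\cdot e_a=C\cdot e_b=1$ --- the triple $\{e_a,C,e_b\}$ is an instance of (P2) with anchors $e_a,e_b$, so $C$ is capturable from the current subgraph. Performing these captures in any order over all $f_{ij}$, all $g_{ij}$ and $h_3,\dots,h_7$ exhausts the $56$ exceptional curves, so the final induced subgraph is $G_7$ itself; since the starting subgraph has two vertices, this is exactly the assertion of $2$-capturability.

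I do not expect a real obstacle here: the whole argument is the enumeration just sketched --- some $54$ capture moves, namely $2+5+47$ --- and its correctness is a finite intersection-number check. The only point that must be gotten right is the bootstrap in Stage~1: the disjoint pair $\{h_1,h_2\}$ has to be produced by (P1) before anything else, because the remaining exceptional divisors $e_3,\dots,e_7$ appear as common neighbours of $h_1$ and $h_2$ but of no pair available earlier; once all seven $e_i$ are in hand, Stage~3 is completely uniform since every other exceptional curve on $X_7$ is a common neighbour of two of them.
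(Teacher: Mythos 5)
Your proof is correct and follows exactly the same three-stage capture sequence as the paper: bootstrap $h_1,h_2$ from $e_1,e_2$ via (P1), then $e_3,\dots,e_7$ from $h_1,h_2$ via (P2), then all remaining curves from pairs of disjoint $e_i$'s via (P2). The only difference is that you spell out the intersection-number checks that the paper leaves as "easily verified."
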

\begin{proof} We will describe the stages of the game explicitly,
\begin{itemize}
\item{Start with $e_1$ and $e_2$}
\item{Capture $h_1$ and $h_2$ from $e_1$ and $e_2$ using move $(P1)$}
\item{Capture $e_3,\dots e_7$ from $h_1$ and $h_2$ using $(P2)$}
\item{Capture all remaining exceptional curves from the $e_i$ using move $(P2)$ (this is possible since every other curve is adjacent to some pair of $e_i$'s)}
\end{itemize}
\end{proof}
\begin{lemma} \label{lem:X7mDg1} For any Del Pezzo surface $X_7$ and any divisor $D$ with $m_D\geq 2$
\[
b_{1,D}({\rm Cox}(X_7))=0
\]
\end{lemma}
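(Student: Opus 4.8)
The plan is to assemble this statement from the machinery already in place, in exact parallel with the proof of Lemma~\ref{lem:X6mD0} for $r=5,6$. The point is that all of the geometric content has been packaged into the preceding two lemmas: Lemma~\ref{lem:configs0} shows that the two capture moves (P1) and (P2) are valid for every divisor $D$ with $m_D\geq 2$, and Lemma~\ref{lem:F^2=0} shows that the graph of exceptional curves $G_7$ is $2$-capturable using precisely these two moves, starting from the subgraph on $\{e_1,e_2\}$.

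Given these two inputs, I would simply invoke Theorem~\ref{thm:games}: since $G_7$ is $2$-capturable by moves valid for $D$, the theorem yields $b_{1,D}(\mathrm{Cox}(X_7))=0$ at once. Unwound, this says that an arbitrary cycle $\sigma\in\mathbb{A}_1(D)$ can be reduced, one capture move at a time along the sequence of Lemma~\ref{lem:F^2=0}, to a cycle supported on $\{e_1,e_2\}$, and any such cycle is a boundary by Lemma~\ref{lem:easy}; hence $H_1(\mathbb{A}(D))=0$.

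There is essentially no obstacle left at this stage — the argument is a formal citation. The substantive work was elsewhere: the bigness estimate $N^2=(G+A'+B'+C')^2\geq (A'+B'+C')^2\geq 1$ inside Lemma~\ref{lem:configs0}, which lets Kawamata--Viehweg apply and feeds Lemma~\ref{lem:surj} to give the needed surjectivity of section maps, together with the explicit four-stage capture sequence in Lemma~\ref{lem:F^2=0} (capture the $h_i$ from the $e_i$ via (P1), then the remaining $e_j$ via (P2), then all other exceptional curves via (P2), using that every such curve meets some pair of the $e_i$). The only thing worth double-checking before writing the one-line proof is that the capture sequence of Lemma~\ref{lem:F^2=0} really uses only the moves (P1) and (P2) whose validity was established in Lemma~\ref{lem:configs0}; the complementary regime $m_D=1$ is not addressed here and is handled separately in the following subsection.
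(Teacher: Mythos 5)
Your proposal is correct and matches the paper's proof exactly: the paper's argument is precisely the one-line citation of Theorem~\ref{thm:games} combined with Lemma~\ref{lem:configs0} (validity of (P1) and (P2) when $m_D\geq 2$) and the $2$-capturability of $G_7$ from $\{e_1,e_2\}$ using those moves. Your accompanying remarks about where the real content lives (the nef-and-big estimate feeding Kawamata--Viehweg and Lemma~\ref{lem:surj}, plus the explicit capture sequence) accurately describe the supporting lemmas.
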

\begin{proof} The result follows immediately from Theorem~\ref{thm:games} and the last two lemmas.\end{proof}

\subsection{Divisors $D$ with $m_D=1$}  
\begin{definition} A divisor class $Q$ on $X_r$ is called a conic bundle if it satisfies
\[
-K\cdot Q=2\text{ ~ and ~ } Q^2=0
\]\end{definition}
We will focus on divisors $D\neq -K$ with $m_D=1$. These can be written as $D=-K+F$ where $F\neq 0$ is nef and effective. We will study two cases depending on whether or not $F$ is a (positive) multiple of a conic bundle.
\begin{lemma} \label{lem:F^2=0} If $F\neq 0$ is a nef and effective divisor on $X_r$, $r\geq 3$ then the following are equivalent:
\begin{itemize}
\item{$F$ contracts a conic bundle (i.e. two adjacent curves in $G_r$)}
\item{$F^2=0$}
\item{$F=mQ$ for some conic bundle $Q$ and $m>0$}
\end{itemize}
\end{lemma}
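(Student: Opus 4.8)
The plan is to prove the three-way equivalence for a nonzero nef and effective divisor $F$ on $X_r$, $r\geq 3$, by establishing the cyclic chain of implications: contracting a conic bundle $\Rightarrow F^2=0 \Rightarrow F=mQ$ for a conic bundle $Q$ and $m>0 \Rightarrow F$ contracts a conic bundle. The last implication is essentially immediate: if $F=mQ$ with $m>0$ and $Q$ a conic bundle, then since $-K\cdot Q=2$ and $Q^2=0$, the Hodge index theorem (or the known structure of conic bundles on Del Pezzo surfaces) shows $|Q|$ is a pencil whose general member splits on the boundary into a pair of adjacent exceptional curves $C_1,C_2$ with $C_1+C_2=Q$; then $F\cdot C_i=mQ\cdot C_i$, and one checks $Q\cdot C_i=0$ using $Q=C_1+C_2$, $C_i^2=-1$, $C_1\cdot C_2=1$, so $F$ contracts the conic bundle $C_1,C_2$.

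For the implication "contracts a conic bundle $\Rightarrow F^2=0$": suppose $F\cdot C_1=F\cdot C_2=0$ for adjacent exceptional curves $C_1,C_2$. I would argue via the Hodge index theorem. Since $F$ is nef and effective and nonzero, $F^2\geq 0$. If $F^2>0$ then $F$ is big and nef, so by Hodge index the orthogonal complement of $F$ in $\Pic(X_r)\otimes\mathbb{R}$ is negative definite; but $C_1,C_2$ lie in $F^\perp$ and span a rank-two subspace on which the intersection form is $\begin{pmatrix}-1&1\\1&-1\end{pmatrix}$, which is negative semidefinite with a zero eigenvector $C_1+C_2$ — contradicting negative definiteness of $F^\perp$. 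Hence $F^2=0$.

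The middle implication, $F^2=0 \Rightarrow F=mQ$ for some conic bundle $Q$ and $m>0$, is the one I expect to be the main obstacle, since it requires showing that the ray spanned by $F$ in the effective cone actually passes through the class of a genuine conic bundle. The approach: since $F$ is nef, effective, nonzero and $F^2=0$, Hodge index forces $F$ to lie on the boundary of the nef cone; I would use the fact (available for Del Pezzo surfaces) that the nef cone is rational polyhedral and its extremal rays of square zero are exactly the conic bundle classes — equivalently, use Riemann–Roch plus $F^2=0$ to compute $-K\cdot F$ and $h^0(X_r,F)$, showing $|F|$ is base-point free and induces a morphism to $\mathbb{P}^1$ whose fibers give a conic bundle $Q$ with $F$ a multiple of $Q$. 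Concretely: write $F=m_0L-\sum a_iE_i$, use $F^2=0$ and nefness against all exceptional curves to constrain the coefficients, and identify the primitive class $Q$ on the ray; positivity of $m$ comes from $F$ being effective and nonzero while $Q$ has $-K\cdot Q=2>0$. Alternatively, and perhaps more cleanly, I would invoke Lemma~\ref{lem:nefandeffective} together with the explicit classification of exceptional curves in Table~\ref{tab:-1curves1} to enumerate, up to the Weyl group action, the nef classes of square zero and check directly that each is a nonnegative multiple of one of the finitely many conic bundle classes listed (for instance $L-E_1$ and $2L-E_1-E_2-E_3-E_4-E_5$ on the relevant $X_r$), reading off that $m>0$.
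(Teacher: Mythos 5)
Your overall architecture (the cyclic chain of implications) is sound, and two of the three arrows are correctly handled: ``$F=mQ\Rightarrow F$ contracts a conic bundle'' is immediate once one knows $Q$ splits as $C_1+C_2$ with $C_1\cdot C_2=1$, and your Hodge-index argument for ``contracts a conic bundle $\Rightarrow F^2=0$'' is correct and, if anything, cleaner than what the paper does. The genuine gap is exactly the arrow you flag as the main obstacle, $F^2=0\Rightarrow F=mQ$: none of the three routes you sketch actually closes it. The first (``the square-zero extremal rays of the nef cone are exactly the conic bundle classes'') is essentially a restatement of the implication, so invoking it is circular. The second (Riemann--Roch, then a base-point-free pencil to $\pp^1$) can be pushed through, but it rests on base-point freeness of nef divisors on Del Pezzo surfaces --- a nontrivial input established nowhere in the paper --- plus a Stein factorization and adjunction argument, none of which is carried out. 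The third (enumerating nef square-zero classes up to the Weyl group) presupposes the very enumeration that is to be proved.

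The paper closes this gap with a blow-down induction that treats all three conditions uniformly. The key observation is: if $F$ is nef and $F\cdot E\geq 1$ for \emph{every} exceptional curve $E$, then $F+K$ is nef, hence effective by Lemma~\ref{lem:nefandeffective}, so writing $F=-K+(F+K)$ gives $F^2\geq K^2>0$. Consequently $F^2=0$ forces $F\cdot E=0$ for some exceptional curve $E$, so $F$ descends to the Del Pezzo surface $X_{r-1}$ obtained by contracting $E$, still nef with square zero; the other two conditions descend and lift in the same way (a contracted conic bundle survives the blow-down, and a conic bundle class on $X_{r-1}$ pulls back to one on $X_r$). Iterating, one lands on $X_2$, where every nef divisor is explicitly $a_0(L-E_1)+a_1L+a_2(L-E_2)$ with $a_i\geq 0$, and each of the three conditions is visibly equivalent to $a_1=0$ and $a_0a_2=0$. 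If you want to keep your Hodge-index argument for the first arrow, you still need to supply a real proof of the middle one; the paper's descent trick is probably the shortest available.
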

\begin{proof} If $r=2$ then, reasoning as in Lemma~\ref{lem:nefandeffective} we see that $F$ can be written as $F=a_0(L-E_1)+a_1L +a_2(L-E_2)$ with $a_i\geq 0$ thus the equivalence follows immediately. 

\noindent For $r>2$ we will study each hypothesis separately,
\begin{itemize}
\item{Suppose $F$ contracts a conic bundle $Q$. Let $V_1,V_2$ be exceptional curves such that $V_1+V_2=Q$. Then $F\cdot V_1=Q\cdot V_1=0$ and both $F$ and $Q$ are divisors on the Del Pezzo $X_{r-1}$ obtained by contracting $V_1$ and moreover $Q$ is still a conic bundle contracted by $F$,  continuing inductively we can reduce to the case $r=2$.} 
\item{If $F^2=0$ then there exists a curve $E$ such that $F\cdot E=0$ (else $F=-K+R$ for some nef and effective divisor $R$ so $F^2>0$). Thus $F$ is a divisor on the Del Pezzo $X_{r-1}$ with $F^2=0$ and we can reduce to the case $r=2$ by induction.}
\item{If $F=mQ$ then $F^2=0$ and the statement follows from the last bullet.}
\end{itemize}
\end{proof}


\subsubsection{Divisors $D=-K+F$, with $F$ not a multiple of a conic bundle and $m_D=1$} If $D$ is one of these then,
\begin{itemize}
\item{Since $F$ is nef and not a multiple of a conic bundle either $-K\cdot F\geq 3$ or $F=-K$. The second case cannot occur since otherwise $m_D=2$ hence $-K\cdot F\geq 3$.}  
\item{$F$ does not contract any pair of adjacent curves (else $F$ would contract a conic bundle contradicting Lemma~\ref{lem:F^2=0}).}
\end{itemize} 
Throughout this section let $C$ be an exceptional curve with $F\cdot C=0$ (such a curve exists since $m_D=1$) and let $A$ be any curve disjoint from $C$ with
\[
F\cdot A=\min\{F\cdot E: E \text{ is an exceptional curve disjoint from $C$}\}
\]

To simplify the exposition we will first discuss the general strategy and then prove the validity of the required moves,

\begin{lemma} \label{lem:moves01} Let $Q,B$ be any two exceptional curves in $X_7$ and let $A$ and $C$ be defined as above. 
The following capture moves are valid for the divisor $D$,
\begin{figure}[h]
\begin{center}
\scalebox {0.68}{
\includegraphics{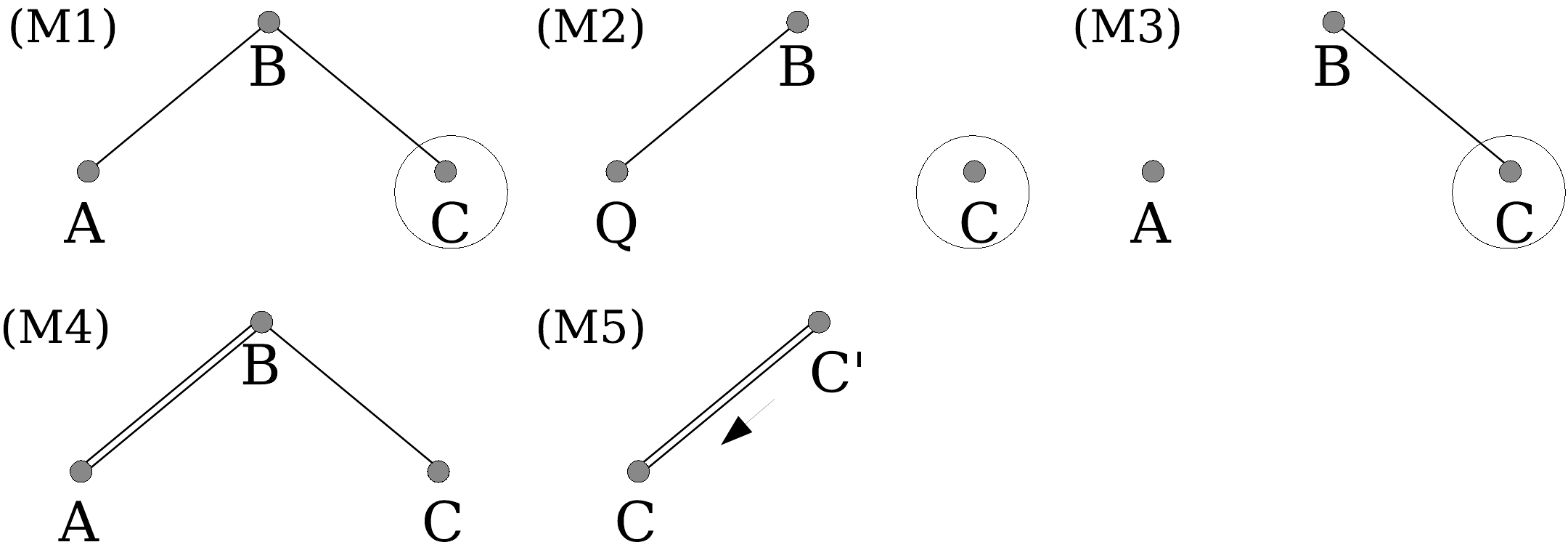}
}
\caption{Valid moves (M) for $r=7$ and $m_D=1$}
\end{center}
\end{figure}
\begin{itemize}
\item{(M1): Capturing every $B$ such that $A\cdot B=B\cdot C=1$ from $A$ and $C$}
\item{(M2): Capturing every $B$ such that there exists a $Q$ with $Q\cdot B=1$ and $Q\cdot C=B\cdot C=0$ from $C$ and $Q$}
\item{(M3): Capturing every $B$ such that $B\cdot C=1$ and $A\cdot B=0$ from $A$ and $C$}
\item{(M4): Capturing $A'$ from $A$ and $C$}
\item{(M5): Capture $C'$ with $C$}
\end{itemize}
\end{lemma}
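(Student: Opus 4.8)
The plan is to reduce everything to the Kawamata--Viehweg vanishing theorem via Lemma~\ref{lem:surj}: for each of the five moves (M1)--(M5), if the ``captured'' vertex is $C_3=\phi(c)$ with $\phi(a_1)$, $\phi(a_2)$ the two other vertices, it suffices to show that $N=D-\phi(c)-\phi(a_1)-\phi(a_2)-K$ is nef and big, since then $H^1(X_7,D-\phi(c)-\phi(a_1)-\phi(a_2))=H^1(X_7,N+K)=0$. Writing $D=-K+F$ we get $N=-2K+F-\phi(c)-\phi(a_1)-\phi(a_2)$, and the whole argument becomes a separate intersection-theoretic estimate for each move. So the structure of the proof is: state this common reduction once, then handle (M1) through (M5) in turn.

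For nefness, in each case I would pair $N$ against an arbitrary exceptional curve $H$ and bound $H\cdot(\phi(c)+\phi(a_1)+\phi(a_2))$ using the combinatorics of $G_7$ recorded at the start of the section (no triangles through a double edge, every vertex on exactly one double edge, $\diam(G_7)=2$, the triangle-sum and duality identities). For (M1), where the three captured curves $A,B,C$ form a triangle, any $H$ meets at most (roughly) $2$ of them plus a controlled contribution, and $N\cdot H=(-2K+F)\cdot H - H\cdot(A+B+C)\geq 2+0-(\text{bound})$; here the minimality in the definition of $A$ (the smallest $F\cdot(\cdot)$ among curves disjoint from $C$) is what saves the estimate when $H$ would otherwise be bad. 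For (M4) and (M5), which involve the dual curves $A'$ and $C'$, I would use that $A+A'=-K$ (resp. $C+C'=-K$) so that $N$ simplifies dramatically: e.g. for (M5), $\phi(c)=C$, and $D-C = -K+F-C$, and the relevant $H^1$ becomes controllable because $-K-C=C'$ is a $(-1)$-curve and $F-C'$ is handled by the minimality/structure hypotheses on $F$. For (M2) and (M3) the captured $B$ satisfies $B\cdot C\le 1$ and the relevant configuration is a path or a disjoint pair, so $H\cdot(\phi(c)+\phi(a_1)+\phi(a_2))\leq 3$ again and nefness follows the same way.

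For bigness I would compute $N^2$ and rewrite it using duality as a sum of squares. As in Lemma~\ref{lem:configs0}, the trick is $N=-2K+F-(\text{three curves})=(-K+F)+(-K-\text{three curves})$; completing to $N=G'+\text{(duals)}$ and using that the dual triple spans a subgraph isomorphic to the original one, one gets $N^2\geq(\text{dual curves})^2+2(\text{cross terms with }F)\geq 1$ because $F$ is nef and effective and the cross terms are strictly positive (no $(-1)$-curve meets both members of a disjoint pair of duals). The one case needing care is where $F$ could be orthogonal to everything in sight; this is exactly excluded by the standing hypothesis of this subsection that $F$ is \emph{not} a multiple of a conic bundle, hence $F^2>0$ by Lemma~\ref{lem:F^2=0}, so one can absorb a positive $F^2$ term.

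The main obstacle I anticipate is bookkeeping rather than conceptual: verifying the nefness inequality $N\cdot H\geq 0$ uniformly over all $28+28$ exceptional curves $H$ for each of the five move-types, since the ``generic'' bound $H\cdot(\phi(c)+\phi(a_1)+\phi(a_2))\le 3$ is sometimes tight and one must check that equality forces a configuration in which the $F$-contribution $F\cdot H$ is strictly positive (using the minimality in the choice of $A$ and the fact that $F\cdot C=0$ with $C$ the unique-ish contracted curve). Getting the (M4)/(M5) cases right — where one captured vertex is a dual and the naive count over-counts the double edge — will require invoking fact (3) (no triangle contains a double edge) carefully. Once these case checks are done, combining with Lemma~\ref{lem:surj} gives validity of all moves, and the proof is complete.
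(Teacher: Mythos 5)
Your overall reduction (Lemma~\ref{lem:surj} plus Kawamata--Viehweg applied to $N=D-\phi(c)-\phi(a_1)-\phi(a_2)-K$) is indeed how the paper handles (M1)--(M4), but it cannot handle (M5), and this is a genuine gap. The move (M5) captures $C'$ using the \emph{single} curve $C$: there is no second capturing curve, so Lemma~\ref{lem:surj} (which requires two disjoint curves $C_1,C_2$) does not apply, and your description of this case (``$\phi(c)=C$\dots'') has the roles of $C$ and $C'$ reversed. The mechanism the paper uses is entirely different: since $F\cdot C=0$ one has $D\cdot C=1$, and since $C\cdot C'=2$ one gets $(D-C')\cdot C=-1$; the long exact sequence of $0\rightarrow \Osh_{X_7}[D-C-C']\rightarrow \Osh_{X_7}[D-C']\rightarrow \Osh_C[D-C']\rightarrow 0$ then shows every section of $D-C'$ is divisible by the section defining $C$. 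It is this divisibility (a fixed-component statement, not an $H^1$-vanishing for a nef and big twist) that lets one move the $u_{C'}$ component onto $u_C$. Your sketch contains no argument that would produce this.

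There are also concrete errors in the nefness bookkeeping for the remaining moves. In (M1) the curves $A,B,C$ form a path, not a triangle ($A\cdot C=0$ by the choice of $A$). More importantly, your claim that $H\cdot(\phi(c)+\phi(a_1)+\phi(a_2))\le 3$ for (M2) and (M3) is false: in (M3) the dual curve $A'$ meets $A+B+C$ with multiplicity $2+1+1=4$, and in (M2) the dual $C'$ meets $Q+B+C$ with multiplicity $4$. In those cases $N\cdot H\ge 0$ requires $F\cdot A'\ge 2$ (resp.\ $F\cdot C'\ge 2$). The second is cheap ($F\cdot C'=-K\cdot F\ge 3$ since $F\cdot C=0$), but the first is not automatic: it is the content of the paper's Lemma~\ref{lem:A'}, proved from the minimality in the choice of $A$ together with a triangle on the cubic surface obtained by blowing down $A$; the same estimate is also what makes the bigness bound $F\cdot(A'+B')\ge 3$ work in (M1) and (M3). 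Your proposal gestures at ``minimality'' but never isolates this quantitative statement, and without it the nef and big verification for (M1)--(M3) does not close. The rest of the bigness computation ($N^2\ge -3+2F\cdot(A'+B')+(A+B+C)^2$, using $F^2\ge 1$ from Lemma~\ref{lem:F^2=0}) matches the paper.
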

\begin{proof} This follows from Lemma~\ref{lem:surj} by Lemmas~\ref{lem:configs1}, ~\ref{lem:M2} and~\ref{lem:M5} which we prove at the end of the section.\end{proof}
\begin{lemma} The graph of exceptional curves $G_7$ is $2$-capturable starting from the subgraph spanned by $A$, $C$ using the moves of Lemma~\ref{lem:moves01}.
\end{lemma}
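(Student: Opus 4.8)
The plan is to exhibit an explicit capture sequence whose moves are all of the types \textbf{(M1)}--\textbf{(M5)} from Lemma~\ref{lem:moves01}, starting from the two-vertex subgraph spanned by $A$ and $C$. The organizing principle is that $C$ never gets captured (its sections are the "pivot" throughout), so at every stage of the game $C$ is available together with whatever has been captured so far; the moves \textbf{(M1)}, \textbf{(M3)}, \textbf{(M4)} all use the pair $(A,C)$, while \textbf{(M2)} uses a pair $(Q,C)$ where $Q$ is a previously captured curve disjoint from $C$, and \textbf{(M5)} uses $C$ alone to capture $C'=-K-C$. First I would record the bookkeeping: the twenty-eight pairs $\{B,B'\}$ of exceptional curves are split by adjacency to $C$, and since $\operatorname{diam}(G_7)=2$ every curve is either equal to $C$, adjacent to $C$, or disjoint from $C$, and in the last case $C'$ is adjacent to it; also $C'$ is the unique curve at distance $2$ from $C$ in the sense that $C\cdot C'=0$ with $C+C'=-K$.

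The sequence I would write down proceeds in four phases. Phase~0: start with $\{A,C\}$; use \textbf{(M4)} to capture $A'$ from $A$ and $C$. Phase~1: now capture, from the pair $(A,C)$, every curve $B$ adjacent to both $A$ and $C$ via \textbf{(M1)}, and every curve $B$ adjacent to $C$ but disjoint from $A$ via \textbf{(M3)}; together these exhaust all curves adjacent to $C$. Phase~2: capture $C'$ with the single-curve move \textbf{(M5)}. Phase~3: every remaining curve $B$ is disjoint from $C$ and distinct from $C'$, hence adjacent to $C'$; since $C'$ has been captured and is disjoint from $C$, I apply \textbf{(M2)} with $Q=C'$ to capture each such $B$ from the pair $(C,C')$. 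At that point every vertex of $G_7$ has been captured, so the sequence terminates at $G_r=G_7$ and the hypotheses of a $2$-capture with starting subgraph $\{A,C\}$ are met.

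The main point requiring care is that each capture move must be \emph{induced}, i.e. when we capture $B$ from the pair $\{X,Y\}$ the induced subgraph on $\{X,Y,B\}$ must be isomorphic to the capture diagram of the relevant move $\mathrm{(Mi)}$ exactly as drawn in Figure~2 --- the edge pattern among $X$, $Y$, $B$ must be the prescribed one and no extra edges. In Phases~1 and~3 this is immediate from how $A$ (resp.\ $C'$) was chosen and from the definitions of \textbf{(M1)}, \textbf{(M2)}, \textbf{(M3)}: the diagrams of those moves are precisely the three possible adjacency types of a triple $\{X,Y,B\}$ in which $B$ is adjacent to the pivot $C$, and the captured curves are exactly the curves $B$ realizing the corresponding type. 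For \textbf{(M4)} one only needs that $A\cdot A'=2$ and $A$, $A'$, $C$ span the correct diagram, which holds because $A'$ is the dual of $A$ and, since $A$ is disjoint from $C$, exactly one of $A,A'$ is adjacent to $C$ (fact (1) of the section), so $A'\cdot C=1$; the diagram for \textbf{(M5)} is just the double edge $C\,C'$. The only genuine obstacle, then, is checking completeness of Phases~1 and~3 --- that between them they account for \emph{every} exceptional curve of $X_7$ other than $A$, $A'$, $C$ and that none is captured twice --- which follows from the diameter-$2$ trichotomy above together with the observation that the curves disjoint from $C$ and different from $C'$ are exactly the curves adjacent to $C'$.
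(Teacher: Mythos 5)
Your Phases 0--2 are fine and essentially reproduce the first stage of the paper's game (capture $A'$, $C'$, and all curves meeting $C$ from the pair $A,C$). The gap is in Phase 3, and it is fatal as written: you assert that $C\cdot C'=0$ and then use $Q=C'$ as the auxiliary curve in move (M2) to capture every curve disjoint from $C$. But $C'=-K-C$ satisfies $C\cdot C'=(-K-C)\cdot C=1-C^2=2$; dual curves are joined by a \emph{double} edge in $G_7$ (fact (1) at the start of the section --- this is precisely why $C'$ has to be captured by the special move (M5) rather than by a triangle move). Hence the hypothesis $Q\cdot C=0$ of (M2) fails for $Q=C'$, the induced subgraph on $\{C,C',B\}$ is not the (M2) diagram, and the validity argument of Lemma~\ref{lem:M2} collapses for this choice (it uses that $Q'$, $B'$, $C$ form a triangle, which is meaningless when $Q'=C$). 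So none of the curves disjoint from both $A$ and $C$ --- and there are many of them --- is actually captured by your sequence. Your supporting claim that ``$C'$ is the unique curve at distance $2$ from $C$'' has the geometry backwards: $C'$ is at distance $1$ (doubly adjacent), while the $27$ curves disjoint from $C$ are the ones at distance $2$.

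The repair is the one the paper uses. First note that (M2) with $Q=A$ is legitimate ($A\cdot C=0$ by hypothesis), so in the first sweep you can also capture every neighbor of $A$ that is disjoint from $C$. After that, every remaining curve $B$ satisfies $B\cdot A=B\cdot C=0$, and one checks that such a $B$ is adjacent to some curve $H$ with $H\cdot A=1$ and $H\cdot C=0$; that $H$ has already been captured and satisfies the (M2) hypotheses ($H\cdot B=1$, $H\cdot C=B\cdot C=0$), so $B$ is captured from $C$ and $H$. In other words, the auxiliary curves $Q$ for the final sweep must be taken from the already-captured neighbors of $A$ lying outside the star of $C$, not from $C'$.
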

\begin{proof} We will describe the stages of the game explicitly,
\begin{itemize}
\item{Start with $A$ and $C$}
\item{Capture all neighbors of $A$ and $C$ from $A$ and $C$ using moves $(M1),\dots ,(M5)$}
\item{Capture all remaining pieces using move $(M2)$. This is possible since every other exceptional curve is adjacent to some curve $H$ adjacent to $A$ (hence already captured) and disjoint from $C$ (necessary and sufficient conditions to use $(M2)$)}
\end{itemize}
\end{proof}
\begin{lemma} \label{lem:X7mDe11} For any Del Pezzo surface $X_7$ and any divisor $D=-K+F$ with $m_D=1$ such that $F\neq 0$ and $F$ not a multiple of a conic bundle,
\[
b_{1,D}({\rm Cox}(X_7))=0
\] 
\end{lemma}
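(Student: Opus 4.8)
The plan is to combine the three structural lemmas just stated into an application of Theorem~\ref{thm:games}. First I would note that the statement to be proved, Lemma~\ref{lem:X7mDe11}, concerns exactly the divisors $D = -K + F$ with $m_D = 1$, $F \neq 0$, and $F$ not a multiple of a conic bundle, and that the preceding two lemmas handle (i) the validity of the capture moves (M1)--(M5) for such a $D$ (via Lemma~\ref{lem:moves01}), and (ii) the fact that the graph $G_7$ of exceptional curves on $X_7$ is $2$-capturable using precisely these moves, starting from the subgraph spanned by $A$ and $C$. So the proof is essentially the observation that the hypotheses of Theorem~\ref{thm:games} are met: $G_7$ is $2$-capturable using moves that are all valid for $D$, hence $b_{1,D}(\operatorname{Cox}(X_7)) = 0$.

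The one point that deserves a sentence of care is that the capture sequence used in the $2$-capturability lemma must consist only of moves among (M1)--(M5), all of which are valid for $D$ by Lemma~\ref{lem:moves01}; there is no mixing in of the $m_D \geq 2$ moves (P1), (P2), which would not be available here. Since the $2$-capturability lemma explicitly produces such a sequence $S_0 \to S_1 \to \cdots \to S_k = G_7$ with $|V(S_0)| \le 2$ and each step a move from (M1)--(M5), and each such move is valid for $D$, the hypothesis ``$G_r$ is $2$-capturable using moves that are valid for $D$'' in Theorem~\ref{thm:games} is verified verbatim. One should also observe that the auxiliary curves $A$ and $C$ appearing in the moves are well-defined for this $D$: $C$ exists because $m_D = 1$ forces some exceptional curve with $D \cdot C = F \cdot C = 0$, and $A$ is then chosen to minimize $F \cdot E$ over exceptional curves $E$ disjoint from $C$, exactly as set up in the paragraph preceding Lemma~\ref{lem:moves01}.

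I do not anticipate a genuine obstacle at this stage, since all the real work — proving the moves valid (which rests on Kawamata--Viehweg vanishing applied to $N = D - A - B - C - K$ being nef and big, to be established in Lemmas~\ref{lem:configs1}, \ref{lem:M2}, \ref{lem:M5}) and checking the combinatorial $2$-capturability of $G_7$ — has been deferred to the other lemmas in this subsection. The only thing to be mildly careful about is that Theorem~\ref{thm:games} is stated for $b_{1,D}(\operatorname{Cox}(X_r))$ using the presentation $k[V_r]/I_r$ of the Cox ring, whereas Lemma~\ref{lem:mingens} has already shown that any degree supporting a nonzero first Betti number is nef and effective; combined with Lemma~\ref{lem:DcontractsCurves} (which disposes of degrees contracting a curve), this confirms that vanishing on the divisors treated here — nef, effective, not contracting a curve, $m_D = 1$, $D \ne -K$, $F$ not a multiple of a conic bundle — is precisely one of the remaining cases in the induction. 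So the proof is just: apply Theorem~\ref{thm:games} with the capture sequence from the previous lemma, whose moves are valid for $D$ by Lemma~\ref{lem:moves01}, to conclude $b_{1,D}(\operatorname{Cox}(X_7)) = 0$.

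\begin{proof}
By Lemma~\ref{lem:moves01} each of the moves (M1)--(M5) is valid for $D$. By the previous lemma, $G_7$ is $2$-capturable starting from the subgraph spanned by $A$ and $C$ using only moves among (M1)--(M5). Hence $G_7$ is $2$-capturable using moves that are valid for $D$, and Theorem~\ref{thm:games} gives $b_{1,D}(\operatorname{Cox}(X_7)) = 0$.
\end{proof}
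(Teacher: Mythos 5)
Your proposal is correct and matches the paper's own proof, which likewise deduces the result immediately from Theorem~\ref{thm:games} combined with Lemma~\ref{lem:moves01} (validity of the moves (M1)--(M5) for $D$) and the preceding $2$-capturability lemma. The extra care you take in noting that the capture sequence uses only those valid moves and that the curves $A$ and $C$ are well-defined is sound but not needed beyond what the paper already records.
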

\begin{proof} The result follows immediately from Theorem~\ref{thm:games} by the two Lemmas above.
\end{proof}

We will now show the validity of the moves $(M_1),\dots, (M_5)$. Recall that $F\cdot C=0$ and that $A$ was chosen so that it is disjoint from $C$ and 
\[
F\cdot A=\min\{F\cdot E: E \text{ is an exceptional curve disjoint from $C$}\}
\]

\begin{lemma} The following statements hold,
\label{lem:A'} 
\begin{itemize}
\item{If $F\cdot A=0$ then $F\cdot A'\geq 3$}
\item{If $F\cdot A\neq 0$ then $F\cdot A'\geq 2$}
\end{itemize}
\end{lemma}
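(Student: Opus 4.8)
The plan is to bound $F\cdot A'$ from below by exploiting the extremal choice of $A$ together with the duality relation $A+A'=-K$, so that $F\cdot A + F\cdot A' = -K\cdot F = -K\cdot F$. Recall from the preceding discussion that $F$ is nef and effective, not a multiple of a conic bundle, with $-K\cdot F\geq 3$. First I would record the identity $F\cdot A'=F\cdot(-K-A)=(-K\cdot F)-(F\cdot A)$. If $F\cdot A=0$, this already gives $F\cdot A'=-K\cdot F\geq 3$, which is the first bullet — so the real content is the second bullet.

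For the second bullet, suppose $F\cdot A\neq 0$. Since $A$ minimizes $F\cdot E$ over all exceptional curves $E$ disjoint from $C$, every such $E$ has $F\cdot E\geq F\cdot A\geq 1$. The plan is to argue that $A'$ must then be adjacent to $C$ (otherwise $A'$ would itself be disjoint from $C$, and we could compare). Concretely, if $A'$ were disjoint from $C$, then by minimality $F\cdot A'\geq F\cdot A$, and adding gives $-K\cdot F=F\cdot A+F\cdot A'\geq 2F\cdot A$; combined with $-K\cdot F\geq 3$ this does not immediately finish things, so I would instead push on the structure: when $A'\cdot C=0$ we are free to blow down and reduce, but the cleaner route is to observe that $A\cdot C=0$ forces (by fact (2) in the $X_7$ list, $A$ adjacent to $B$ iff $A'$ disjoint from $B$, applied with the roles swapped) that $A'\cdot C\geq 1$, i.e. $A'$ is \emph{not} disjoint from $C$ — wait, $A\cdot C=0$ means $A$ and $C$ are non-adjacent, which by fact (2) says $A'$ \emph{is} adjacent to $C$. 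So $A'\cdot C\geq 1$, hence $A'$ is not among the curves over which $A$ was chosen to be extremal, and we genuinely need a different estimate for $F\cdot A'$.

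So the key step is: given $A'\cdot C=1$ (equivalently $A'\cdot C\geq 1$, and it equals $1$ since $A'\neq C'$ as $C$ is disjoint from $F$ but... in any case $A'\cdot C\le 2$), I would consider the divisor $F$ evaluated against the triangle or near-triangle formed by $A'$, $C$ and a third curve, or more simply use that $F\cdot(A+A')=-K\cdot F\geq 3$ and separately rule out $F\cdot A'=1$. To rule out $F\cdot A'\leq 1$: if $F\cdot A'=0$ then $A'$ is a curve contracted by $F$ that is adjacent to $C$, and $C$ is also contracted by $F$ — so $F$ contracts the conic bundle $C+A'$ (two adjacent exceptional curves sum to a conic bundle by fact (4)), contradicting the hypothesis that $F$ is not a multiple of a conic bundle via Lemma~\ref{lem:F^2=0}. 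If $F\cdot A'=1$, then $F\cdot A=-K\cdot F-1\geq 2$, but the minimality of $A$ among curves disjoint from $C$ forces every exceptional curve disjoint from $C$ to have $F$-degree $\geq 2$; I would then produce a curve disjoint from $C$ of $F$-degree $\leq 1$ — for instance, since $A'$ is adjacent to $C$, fact (2) gives that $(A')'=A$ is disjoint from... no, that is circular. The honest obstacle, and the main one, is exactly this: showing $F\cdot A'\neq 1$ in the case $F\cdot A\geq 2$, which I expect to require a genuine use of the $X_7$ combinatorics — exhibiting, from the assumed configuration, an exceptional curve disjoint from $C$ with small $F$-degree (perhaps a curve in a triangle with $A'$ and $C$, or the image of $A'$ under a Weyl reflection fixing $C$), thereby contradicting minimality. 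Once that case-analysis is complete both bullets follow by the arithmetic identity $F\cdot A+F\cdot A'=-K\cdot F$.
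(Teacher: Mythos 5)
Your first bullet is exactly the paper's argument, and your observation that $F\cdot A'=0$ is impossible (if $F\cdot A'=0$ then $A'$ is contracted by $F$ and, by fact (2), adjacent to the contracted curve $C$, so $F$ contracts the conic bundle $C+A'$, contradicting the standing assumption that $F$ is not a multiple of a conic bundle) is correct. But you explicitly leave open the case $F\cdot A'=1$, equivalently $F\cdot A=-K\cdot F-1\geq 2$, and that case is the entire content of the second bullet; as written the proposal does not prove the statement.

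The missing idea is the following piece of $X_7$ combinatorics, which is what the paper uses. Blow down $A$ to obtain a cubic surface $X_6$; since every vertex of $G_6$ lies in a triangle, there is a triangle $Q_1,Q_2,C$ in $G_7$ whose vertices are all disjoint from $A$ and which satisfies $Q_1+Q_2+C=-K_6=-K_7+A$. Pairing with $F$ and using $F\cdot C=0$ gives $F\cdot Q_1+F\cdot Q_2=-K\cdot F+F\cdot A\geq 4$, so some $Q_i$ satisfies $F\cdot Q_i\geq 2$. This $Q_i$ is adjacent to $C$, hence its dual $Q_i'$ is disjoint from $C$, and the minimality defining $A$ yields $F\cdot A\leq F\cdot Q_i'$. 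Therefore
\[
F\cdot A'=-K\cdot F-F\cdot A\;\geq\; -K\cdot F-F\cdot Q_i'\;=\;F\cdot Q_i\;\geq\;2 .
\]
Note the direction of the comparison: the auxiliary curve $Q_i'$ disjoint from $C$ is not used to contradict minimality by having small $F$-degree, but to bound $F\cdot A$ from above, which is what turns the identity $F\cdot A+F\cdot A'=-K\cdot F$ into the desired lower bound on $F\cdot A'$.
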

\begin{proof} 
By definition of $A'$, $A+A'=-K$ so $F\cdot(A+A')=-K\cdot F\geq 3$ and the first statement follows.

For the second statement let $X_6$ be the cubic surface obtained by blowing down $A$. In $G_6$ every vertex belongs to a triangle so in $G_7$ there is a triangle with vertices $Q_1,Q_2, C$ (disjoint from $A$) with $Q_1+Q_2+C=-K_6=-K_7+A$. Thus
\[
F\cdot(Q_1+Q_2+C)=F(-K+C)=-K\cdot F\geq 3
\]  
Hence there exists a curve  $Q_1$ adjacent to $C$ with $F\cdot Q_1\geq 2$. 
It's dual curve $Q_1'$ is disjoint from $C$ so $F\cdot A\leq F\cdot Q_1'$ and  $F(A+A')=F(Q_1+Q_1')=-K\cdot F$ so $F\cdot A'\geq F\cdot Q_1=2$ as we wanted to show.
\end{proof}

\begin{lemma} \label{lem:configs1} If $B$ is an exceptional curve in any of the following configurations, then $h^1(D-A-B-C)=0$.
\begin{itemize}
\item{(M1): $A\cdot B=B\cdot C=1$}
\item{(M3): $B\cdot C=1$ and $A\cdot B=0$}
\item{(M4): $B=A'$} 
\end{itemize}
\end{lemma}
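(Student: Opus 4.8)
The plan is to prove that $h^1(X_7, D - A - B - C) = 0$ in each of the three configurations by the same device used throughout Section~\ref{games}: write $N = D - A - B - C - K = -K + F - A - B - C$ and apply Kawamata--Viehweg, so it suffices to show $N$ is nef and big. Since $F$ here is nef and effective with $-K\cdot F \geq 3$, and $D = -K + F$, we have $N = -2K + F - (A+B+C)$. For nefness, I would test $N$ against an arbitrary exceptional curve $H$: since $G_7$ has no triangles containing a double edge and the three curves $A,B,C$ in each configuration span a path or triangle (never containing a double edge), one gets $H\cdot(A+B+C)\leq 3$, hence $N\cdot H \geq 2(-K\cdot H) - 3 \geq 2 - 3 = -1$; this crude bound is not quite enough, so I would need to use $F\cdot H \geq 0$ together with the fact that $F\cdot C = 0$ (and the minimality defining $A$) to recover the missing unit whenever $H\cdot(A+B+C)=3$.

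More precisely, I would argue case by case on the intersection pattern of $H$ with $\{A,B,C\}$. If $H\cdot(A+B+C)\leq 2$ then $N\cdot H\geq 2-2=0$ and we are done. If $H\cdot(A+B+C)=3$, then $H$ meets all of $A$, $B$, $C$; in particular $H\cdot C = 1$, so $H$ is a neighbor of $C$. Now I would invoke the structure from the proof of Lemma~\ref{lem:A'}: blowing down $A$ produces a cubic surface $X_6$, in which the edge containing $C$ lies in a unique triangle, and this triangle lifts to a triangle $Q_1, Q_2, C$ in $G_7$ with $Q_1+Q_2+C = -K + A$, all disjoint from $A$. A curve $H$ meeting $A$, $B$, and $C$ simultaneously forces a fairly rigid local picture, and the key point is that such an $H$ cannot be "cheap" for $F$: since $F\cdot C = 0$ and $-K\cdot F \geq 3$, the curve $H$ (being disjoint from $A$ is impossible here since $H\cdot A = 1$, so instead) must satisfy $F\cdot H \geq 1$ by the minimality in the definition of $A$ applied to the dual or to a suitable related curve. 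That extra $F\cdot H\geq 1$ gives $N\cdot H = 2(-K\cdot H) + F\cdot H - 3 \geq 2 + 1 - 3 = 0$.

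For bigness, I would compute $N^2$. Using $-K = A + A'$ etc., write $N = -2K + F - A - B - C$. In configuration (M4), $B = A'$, so $A + B = -K$ and $N = -K + F - C = A' + F - C + \cdots$; more usefully $N = (-K - C) + F + (-K - A - B) + C$-type regroupings let one express $N$ as $F$ plus an effective sum of exceptional curves whose self-intersection is visibly positive, exactly as in Lemma~\ref{lem:m_D>0} and Lemma~\ref{lem:configs0}. In configurations (M1) and (M3), $A,B,C$ span a path (length $2$ or a single edge $BC$ with $A$ isolated from it), and the dual curves $A', B', C'$ span an isomorphic configuration; writing $N = F + (A' + B' + C') - K + (\text{correction})$ and using $F$ nef, I would reduce $N^2 \geq (A'+B'+C')^2 + (\text{nonnegative cross terms})$, and the configuration being triangle-free of that shape forces $(A'+B'+C')^2 \geq 1$ just as in Lemma~\ref{lem:configs0}.

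The main obstacle I anticipate is the nefness verification in the boundary case $H\cdot(A+B+C)=3$: the naive numerical bound loses exactly one, and closing that gap requires genuinely using the \emph{choice} of $A$ (the minimality of $F\cdot A$ among curves disjoint from $C$) rather than only the configuration combinatorics. I would handle this by showing that any $H$ with $H\cdot A = H\cdot B = H\cdot C = 1$ gives rise, via duality and the blow-down-$A$ trick, to an exceptional curve disjoint from $C$ with $F$-degree $\leq F\cdot H$, forcing $F\cdot H \geq F\cdot A$; combined with the already-established fact that $F\cdot A$ cannot be zero in a configuration where a curve hits $A$, $B$, $C$ at once (otherwise $F\cdot A' \geq 3$ by Lemma~\ref{lem:A'} constrains things too tightly), this yields $F\cdot H \geq 1$ and completes the argument. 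The remaining computations — checking $H\cdot(A+B+C)\leq 3$ in each picture, and the $N^2$ estimates — are routine once the configurations are drawn (Figure, referenced above), and I would present them compactly.
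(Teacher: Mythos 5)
Your overall strategy coincides with the paper's: set $N=D-A-B-C-K=-2K+F-(A+B+C)$, show $N$ is nef and big, and conclude by Kawamata--Viehweg. But the case analysis you propose for nefness fails in configuration (M3), and that is precisely where the real work lies. First, the bound $H\cdot(A+B+C)\leq 3$ is false there: since $A\cdot B=A\cdot C=0$, the dual curve $H=A'$ satisfies $A'\cdot A=2$, $A'\cdot B=A'\cdot C=1$, so $H\cdot(A+B+C)=4$. (Your parenthetical claim that no configuration contains a double edge is also wrong for (M4), where $B=A'$.) This level-$4$ case is exactly where the paper invokes Lemma~\ref{lem:A'} to get $F\cdot A'\geq 2$, whence $N\cdot A'\geq 2+2-4=0$; without that input the argument does not close. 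Second, your treatment of the level-$3$ case assumes $H$ meets each of $A,B,C$ once and so is adjacent to $C$. In (M3) the curve $H=B'$ has $B'\cdot B=2$, $B'\cdot A=1$, $B'\cdot C=0$: it reaches level $3$ while being disjoint from $C$. The paper handles it by a dichotomy: if $F\cdot A\geq 1$ then $F\cdot B'\geq 1$ by the minimality defining $A$ (as $B'$ is disjoint from $C$); if $F\cdot A=0$ then $F\cdot B'=0$ would make $F$ contract the conic bundle $A+B'$ (note $A\cdot B'=1$), which is excluded. Your proposed fix --- that $F\cdot A$ "cannot be zero" in such a configuration --- is not correct: $F\cdot A=0$ is entirely possible, and the dichotomy above is what is actually needed. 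Conversely, for every $H$ at level $3$ that \emph{is} adjacent to $C$, the bound $F\cdot H\geq 1$ is immediate from $F\cdot C=0$ and the fact that $F$ contracts no conic bundle; no appeal to minimality is required there.

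The bigness step has the same blind spot: your claim that $(A'+B'+C')^2\geq 1$ "as in Lemma~\ref{lem:configs0}" fails in (M3), where the dual configuration is isometric to $\{A,B,C\}$ and $(A+B+C)^2=-3+2(B\cdot C)=-1$. The paper instead isolates the cross term $2F\cdot(A'+B')$ and bounds $F\cdot(A'+B')\geq 3$ using Lemma~\ref{lem:A'} together with the same dichotomy on $F\cdot A$, which compensates for the negative self-intersection and yields $N^2\geq 2$. In short, the skeleton of your argument is the right one, but as written it would fail at the $H=A'$ and $H=B'$ subcases of (M3) and at the bigness estimate for (M3); the systematic use of Lemma~\ref{lem:A'} (and of the fact that $F$ contracts no conic bundle) is the missing ingredient in all three places.
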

\begin{proof} We will show that the divisor $N=D-A-B-C-K=-2K+F-(A+B+C)$ is nef and big. The result will follow from the Kawamata-Viehweg vanishing theorem.
\begin{itemize}
\item{If $H\cdot(A+B+C)\leq 2$ then $N\cdot H\geq 0$}
\item{If $H\cdot(A+B+C)=3$ then $H$ is either adjacent to $C$ (so $F\cdot H\geq 1$ since $F$ does not contract any conic bundle) or the configuration is $(M3)$ and $H=B'$ (In this case $F\cdot H\geq 1$ since either $F\cdot A=0$ and $F$ contracts no conic bundle or $F\cdot A\geq 1$ and every other curve $Q$ disjoint from $C$, and in particular $B$, has $F\cdot Q\geq 1$). Hence $N\cdot H\geq 2+1-3=0$}
\item{If $H\cdot(A+B+C)=4$ then $H=A'$ and by Lemma~\ref{lem:A'} $F\cdot A'\geq 2$ so $N\cdot H\geq 2+2-4=0$}  
\end{itemize}
Thus $N$ is a nef divisor. To verify bigness we will compute $N^2$
\[
N^2=4K^2+2(-2K(F-(A+B+C)))+(F-(A+B+C))^2=
\]
\[
=4K^2+2(-2KF-6)+F^2-2FA-2FB+(A+B+C)^2=
\]
\[
= -4+2((-K-A)F+(-K-B)F)+F^2+(A+B+C)^2 \geq 
\]
\[
\geq -3+2((-K-A)F+(-K-B)F)+(A+B+C)^2=-3+2(A'F+B'F)+(A+B+C)^2
\]
where the inequality follows from the fact that $F$ is not a multiple of a conic bundle. Now we will study the last quantity in each of the configurations.
\begin{itemize}
\item{In $(M1)$ or $(M3)$, $F\cdot(A'+B')\geq 3$ since either $F\cdot A=0$ and $F\cdot A'\geq 3$ or $F\cdot A\geq 1$ and $F\cdot B'\geq 1$ (since $B'$ is disjoint from $C$) and $F\cdot A'\geq 2$ by Lemma~\ref{lem:A'}. Moreover $(A+B+C)^2\geq -1$ so $N^2\geq 2>0$}
\item{In $(M4)$, $A'+B'=-K$ and $(A+B+C)^2=3$ so $N^2\geq 6>0$} 
\end{itemize}
\end{proof}

\begin{lemma} \label{lem:M2} If $Q$ and $B$ are exceptional curves such that $Q\cdot B=1$ and $Q\cdot C=B\cdot C=0$ (see (M2) in the figure) then $h^1(D-Q-B-A)=0$.
\end{lemma}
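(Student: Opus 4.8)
The plan is to show that the divisor $N=D-Q-B-A-K$ is nef and big and then invoke Kawamata--Viehweg vanishing together with Lemma~\ref{lem:surj}, exactly as in Lemma~\ref{lem:configs1} and Lemma~\ref{lem:m_D>0}. Writing $D=-K+F$ we have $N=-2K+F-(A+B+Q)$. For nefness, pair $N$ with an arbitrary exceptional curve $H$: since $G_7$ contains no triangles through a double edge, $H\cdot(A+B+Q)\le 3$ in general, so $N\cdot H\ge 2-3+F\cdot H$, which is $\ge 0$ as soon as $F\cdot H\ge 1$. The cases to worry about are $H\cdot(A+B+Q)=3$ with $F\cdot H=0$, and $H\cdot(A+B+Q)=4$, which forces $H$ to be a double-partner of one of $A,B,Q$. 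Here I would use the configuration hypotheses: $Q\cdot C=B\cdot C=0$ while $Q\cdot B=1$, and $A$ is disjoint from $C$ with $F\cdot A$ minimal among curves disjoint from $C$; if $H$ is disjoint from $C$ then $F\cdot H\ge F\cdot A$, and one checks that a curve $H$ with large intersection with $A+B+Q$ is either disjoint from $C$ (so controlled by minimality of $F\cdot A$, together with the fact that $F$ contracts no conic bundle when $F\cdot A=0$) or meets $C$ and hence has $F\cdot H\ge 1$ since $F$ is not a multiple of a conic bundle and $B,Q$ are disjoint from $C$ — analogous to the trichotomy in Lemma~\ref{lem:configs1}, with Lemma~\ref{lem:A'} handling the double-edge cases $H=A'$, $H=B'$, $H=Q'$.

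For bigness I would expand $N^2$ just as in Lemma~\ref{lem:configs1}:
\[
N^2=4K^2+2\bigl(-2K\cdot(F-(A+B+Q))\bigr)+\bigl(F-(A+B+Q)\bigr)^2,
\]
and after using $-K\cdot F\ge 3$, $-K\cdot A=-K\cdot B=-K\cdot Q=1$, $A^2=B^2=Q^2=-1$, $Q\cdot B=1$, $A\cdot C$-type intersections, this simplifies to an expression of the shape $-\mathrm{const}+2\bigl((-K-A)\cdot F+(-K-B)\cdot F\bigr)+(A+B+Q)^2=-\mathrm{const}+2\bigl(A'\cdot F+B'\cdot F\bigr)+(A+B+Q)^2$. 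Since $A'$ and $B'$ are disjoint from $C$ (as $A,B$ meet $C$ at most in the relevant configuration — in fact here $B\cdot C=0$, so $B'\cdot C=1$; one must be slightly careful and instead pick the right pair, using that $F$ is not a multiple of a conic bundle so $F\cdot A'\ge 2$ by Lemma~\ref{lem:A'} when $F\cdot A\ge 1$, and $F\cdot A'\ge 3$ when $F\cdot A=0$), the linear terms contribute at least $3$ after subtracting the conic-bundle slack $F^2\ge 1$, while $(A+B+Q)^2\ge -1$ for a three-vertex subgraph of $G_7$. Putting these together gives $N^2>0$.

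The main obstacle I expect is the bookkeeping in the nefness step: unlike the $(M1),(M3),(M4)$ configurations of Lemma~\ref{lem:configs1}, here neither $B$ nor $Q$ meets $C$, so the argument "$H$ adjacent to $C$ forces $F\cdot H\ge 1$" is only available through $A$ (which is disjoint from $C$) or through the minimality of $F\cdot A$; I would need to enumerate carefully which exceptional curves $H$ can achieve $H\cdot(A+B+Q)\ge 3$ given the specific adjacency pattern ($Q\cdot B=1$, $Q\cdot C=B\cdot C=0$, $A\cdot C=0$) and verify $F\cdot H\ge 1$ case by case, invoking Lemma~\ref{lem:A'} for the double-edge partners. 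This is the same flavor of finite case analysis as in Lemma~\ref{lem:configs1} and Lemma~\ref{lem:m_D>0}, just with a different triangle pattern, so once the nefness cases are dispatched the bigness computation is routine.
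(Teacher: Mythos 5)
You have built the argument around the wrong triple of curves. Move (M2) captures $B$ \emph{from $C$ and $Q$}, so by Lemma~\ref{lem:surj} (applied to the disjoint curves $C$ and $Q$) the vanishing actually needed is $h^1(D-Q-B-C)=0$; the ``$A$'' in the statement is a typo for ``$C$'', and the paper's own proof accordingly works with $N=D-Q-B-C-K=-2K+F-(Q+B+C)$. Your proposal instead analyses $-2K+F-(A+B+Q)$, and this is not a harmless relabelling: every structural fact that makes the nefness check close is specific to the triple $\{Q,B,C\}$. Concretely, since $Q\cdot B=1$ forces $Q'\cdot B=B'\cdot Q=0$, any exceptional curve $H$ with $H\cdot(Q+B+C)=3$ must meet $C$, whence $F\cdot H\geq 1$ because $F\cdot C=0$ and $F$ contracts no conic bundle; and $H\cdot(Q+B+C)=4$ forces $H=C'$, for which $F\cdot C'=F\cdot(C+C')=-K\cdot F\geq 3$. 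For bigness, $Q'$, $B'$ and $C$ form a triangle, so $F\cdot(Q'+B')=F\cdot(Q'+B'+C)\geq -K\cdot F\geq 3$, while $(Q+B+C)^2=-1$, giving $N^2\geq -3+6-1=2>0$. None of this is available for $\{A,B,Q\}$: $A$ has no prescribed adjacency to $B$ or $Q$, and your fallback ``$F\cdot H\geq F\cdot A$ for $H$ disjoint from $C$'' yields nothing when $F\cdot A=0$ (a case that genuinely occurs and that Lemma~\ref{lem:A'} is forced to treat separately), so a curve $H$ disjoint from $C$ with $F\cdot H=0$ and $H\cdot(A+B+Q)=3$ would already violate nefness and you have not excluded it.

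Independently of the $A$-versus-$C$ issue, the proposal defers exactly the step that carries all the content: you write that you ``would need to enumerate carefully'' which $H$ achieve $H\cdot(A+B+Q)\geq 3$ and then check $F\cdot H\geq 1$ case by case. That enumeration \emph{is} the proof; without it the nefness claim is unsupported. If you redo the computation with the triple $\{Q,B,C\}$ and the two observations above, the case analysis collapses to three short lines, as in the paper.
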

\begin{proof} As before we will show that $N=D-Q-B-C-K=-2K+F-(Q+B+C)$ is nef and effective.\\
For an exceptional curve $H$ we will study several cases according to the value of $H\cdot (A+B+C)$,
\begin{itemize}
\item{ If $H\cdot(Q+B+C)\leq 2$ then $N\cdot H\geq 2-2=0$}
\item{ If $H\cdot(Q+B+C)=3$ then $H$ must intersect $C$ so $F\cdot H\geq 1$ (since $F$ contracts no conic bundle) and $N\cdot H\geq 0$}
\item{Finally, if $H\cdot(Q+B+C)=4$ then $H=C'$ so $F\cdot H=F\cdot C'=F\cdot(C'+C)=-K\cdot F\geq 3$ (since $F$ is not a conic bundle nor $-K$) so $N\cdot H\geq 0$}
\end{itemize} 
To see that $N$ is big note that (as in Lemma~\ref{lem:configs1})
\[
N^2\geq -3+2(Q'F+B'F)+(Q+B+C)^2
\]
In our case $Q', B'$ and $C$ form a triangle so $F(Q'+B')=F(Q'+B'+C)=F(-K+V)\geq -KF\geq 3$ (where $V$ is disjoint from $Q', B'$ and $C$). Since $(Q+B+C)^2=-1$, $N^2\geq 2>0$ and by Kawamata-Viehweg, $h^1(N+K)=0$.\end{proof}

\begin{lemma} \label{lem:M5} If $D\cdot C=1$ then every section of $H^0(X_r,D-C')$ is divisible by $C$, that is, $C$ is in the fixed part of $D-C'$.
\end{lemma}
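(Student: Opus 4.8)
The plan is to reduce the statement to a single negative intersection number. Writing $C'=-K-C$, we have $D-C'=D+K+C$, and the assertion that every section of $H^0(X_r,D-C')$ is divisible by $C$ is exactly the assertion that the exceptional curve $C$ is a fixed component of the linear system $|D-C'|$. Since $\Cox(X_r)$ is factorial (\cite{EKW}, as already used in Lemma~\ref{lem:easy}) and the distinguished section $x_C$ cutting out $C$ is a prime element, ``vanishes identically along $C$'' and ``is divisible by $x_C$'' mean the same thing, so it suffices to prove the geometric statement.

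First I would record the elementary fact that on a smooth projective surface an irreducible curve $E$ with $N\cdot E<0$ must lie in the base locus of $|N|$: if $s\in H^0(X_r,N)$ did not vanish identically along $E$, then its restriction would be a nonzero section of the line bundle $\Osh_E(N|_E)$, which has negative degree $N\cdot E$ on the reduced irreducible curve $E$, a contradiction. Hence every such $s$ factors as $s=x_E s'$ with $s'\in H^0(X_r,N-E)$.

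The proof is then the computation of $(D-C')\cdot C=D\cdot C-C'\cdot C$. Here $C'\cdot C=(-K-C)\cdot C=(-K\cdot C)-C^2=1-(-1)=2$, using only the defining equalities $-K\cdot C=1$ and $C^2=-1$ of an exceptional curve; combined with the hypothesis $D\cdot C=1$ this gives $(D-C')\cdot C=1-2=-1<0$. Applying the fact above with $E=C$ and $N=D-C'$ shows that $C$ is a fixed component of $|D-C'|$, which is exactly the claim. (When $H^0(X_r,D-C')=0$ the statement is vacuous.)

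I do not expect a genuine obstacle here: the entire content is that $C'\cdot C=2$, which forces $(D-C')\cdot C<0$ as soon as $D\cdot C=1$. The only points worth a sentence are the passage from ``vanishing on $C$'' to honest algebraic divisibility, which rests on the factoriality of $\Cox(X_r)$ and the fact that $x_C$ generates the one-dimensional space $H^0(X_r,C)$, and (if one wishes to be careful) the observation that the computation $C'\cdot C=2$ uses nothing about $C'$ beyond $C'=-K-C$, so it applies verbatim on any $X_r$.
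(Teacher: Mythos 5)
Your proof is correct and is essentially the paper's argument: both hinge on the computation $(D-C')\cdot C = 1-2 = -1$, and your restriction-of-sections argument is just the explicit form of the long exact sequence the paper invokes for $0\rightarrow \Osh_{X_r}[D-C-C']\rightarrow \Osh_{X_r}[D-C']\rightarrow \Osh_C[D-C']\rightarrow 0$. Your extra remark on passing from geometric vanishing along $C$ to divisibility by $x_C$ via factoriality of $\Cox(X_r)$ is a point the paper leaves implicit, but it does not change the substance.
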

\begin{proof} In this case $(D-C')\cdot C=-1$ so the result follows form the long exact sequence in cohomology associated to the short exact sequence of sheaves
\[
0\rightarrow \Osh_{X_r}[D-C-C']\rightarrow \Osh_{X_r}[D-C']\rightarrow \Osh_C[D-C']\rightarrow 0
\]
\end{proof}

\subsubsection{ Divisors $D=-K+F$ with $m_D=1$ and $F=mQ$ a positive multiple of a conic bundle $Q$} Throughout the rest of the section we fix disjoint exceptional curves $A$ and $C$ contracted by $F$ (such curves exist since $F$ is a multiple of a conic bundle). 

We will first overview the strategy and then prove the validity of the required moves.
\begin{lemma}\label{lem:moves02} Let $B$ be any exceptional curve in $X_7$ and let $A$ and $C$ be defined as above. 
The following capture moves are valid for the divisor $D$,
\begin{figure}[h]
\begin{center}
\scalebox {0.75}{
\includegraphics{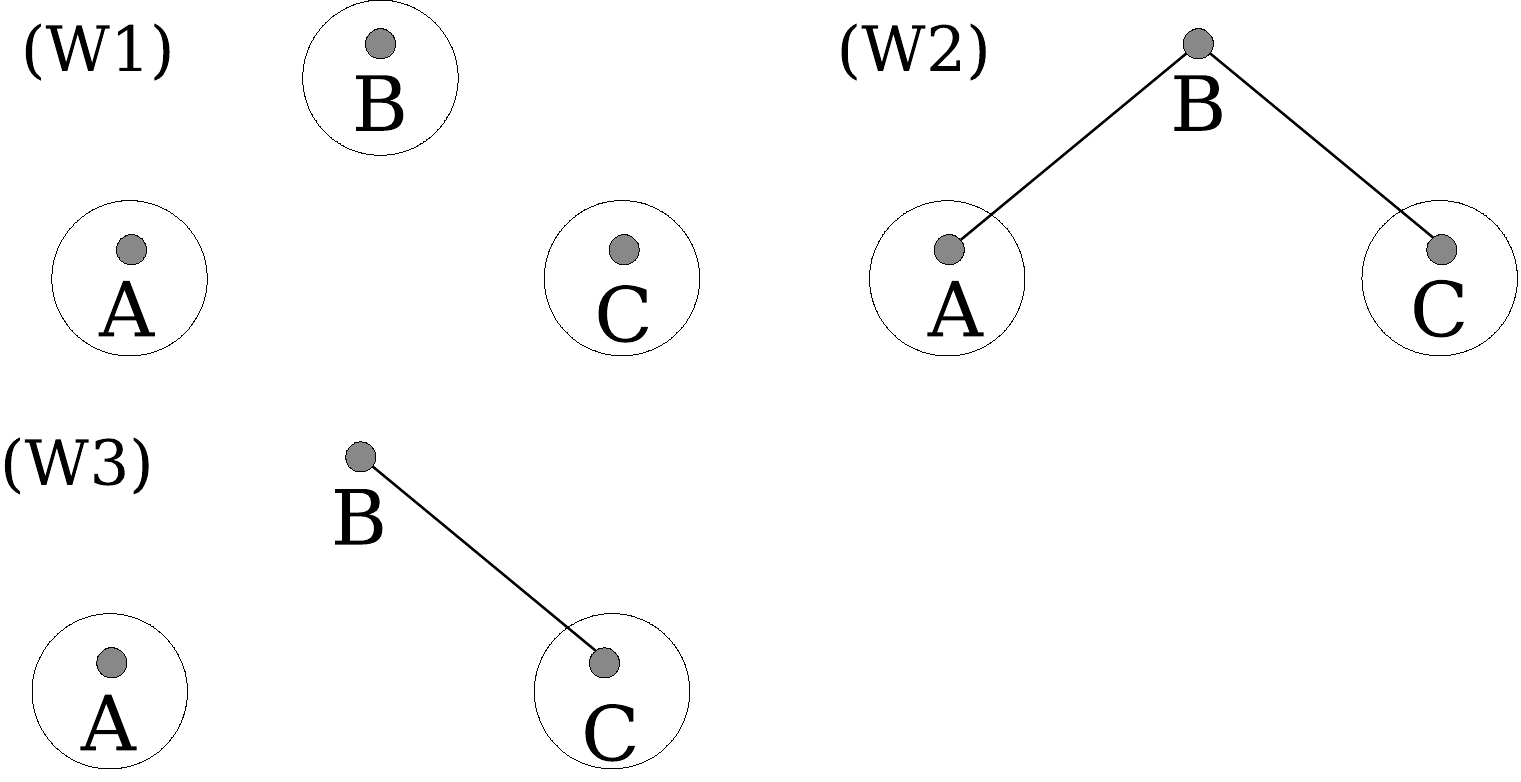}
}
\caption{Valid moves (W) for $r=7$ and $m_D=1$}
\end{center}
\end{figure}
\begin{itemize}
\item{(W1): Capturing every $B$ contracted by $F$ such that $A\cdot B=B\cdot C=0$ from $A$ and $C$}
\item{(W2): Capturing every $B$ such that $A\cdot B=B\cdot C=1$ from $A$ and $C$}
\item{(W3): Capturing every $B$ such that $F\cdot B'\geq 1$, $B\cdot C=1$ and $A\cdot B=0$ from $A$ and $C$}
\end{itemize}
\end{lemma}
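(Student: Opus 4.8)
The plan is to treat all three moves uniformly. Each has captured vertex $B$ and arms $A,C$, and $A,C$ are disjoint by the choice fixed at the start of this subsection, so by Lemma~\ref{lem:surj} it suffices to show $h^1(D-A-B-C)=0$ in each case. Exactly as in Lemmas~\ref{lem:configs0},~\ref{lem:configs1} and~\ref{lem:M2}, the Kawamata--Viehweg vanishing theorem reduces this to checking that
\[
N:=D-A-B-C-K=-2K+F-(A+B+C)
\]
is nef and big. Two facts specific to this case drive the argument. First, since $F=mQ$ for a conic bundle $Q$ and an integer $m\geq 1$, any exceptional curve $H$ has $F\cdot H=m(Q\cdot H)$, which is $0$ exactly when $H$ is a component of a reducible fibre of the morphism defined by $|Q|$ and is $\geq m\geq 1$ otherwise. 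Second, $A$ and $C$ are both contracted by $F$ and disjoint, hence are components of two \emph{distinct} reducible fibres of $|Q|$; in particular $F\cdot A'=F\cdot C'=-K\cdot F=2m\geq 2$ since $A+A'=C+C'=-K$.

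For nefness I would bound $N\cdot H=2+F\cdot H-(A+B+C)\cdot H$ for an arbitrary exceptional curve $H$, splitting according to the value of $(A+B+C)\cdot H$. If it is $\leq 2$ then $N\cdot H\geq F\cdot H\geq 0$. If it equals $3$, the only options are $H\cdot A=H\cdot B=H\cdot C=1$, or $H$ a dual of one of $A,B,C$ meeting the other two with total intersection $1$; in the first option $F\cdot H=0$ would place $H$ in the fibre of $A$ and in the fibre of $C$, contradicting that these are distinct, so $F\cdot H\geq 1$; in the second, $H\in\{A',C'\}$ gives $F\cdot H=2m$, and $H=B'$ can occur only for move $(W3)$, where the hypothesis $F\cdot B'\geq 1$ is exactly what is needed. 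If $(A+B+C)\cdot H=4$ then $H\in\{A',B',C'\}$ (any other profile forces two of $A,B,C$ to coincide), and the case $H=B'$ arises only for $(W1)$ — for $(W2)$ and $(W3)$ one has $B'\cdot C=1-B\cdot C=0$, so the intersection is at most $3$ — and in every case $F\cdot H\geq 2$, using that $B$ is contracted in $(W1)$. Hence $N$ is nef.

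For bigness I would use $2(-K)=A+A'+B+B'$ to rewrite $N=A'+B'-C+F$; since $F^2=0$ this gives $N^2=(A'+B'-C)^2+2F\cdot(A'+B'-C)$. Here $(A'+B'-C)^2=-3+2(A'\cdot B'-A'\cdot C-B'\cdot C)$ with $A'\cdot B'=A\cdot B$, $A'\cdot C=1-A\cdot C=1$, $B'\cdot C=1-B\cdot C$, and $F\cdot(A'+B'-C)=2m+F\cdot B'$. Feeding in the configuration data — $(W1)$: $A\cdot B=B\cdot C=0$ and $F\cdot B'=2m$; $(W2)$: $A\cdot B=B\cdot C=1$; $(W3)$: $A\cdot B=0$, $B\cdot C=1$, $F\cdot B'\geq 1$ — one gets $N^2\geq 4m-3\geq 1>0$ in all three cases. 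Thus $N$ is nef with positive self-intersection, hence big, and Kawamata--Viehweg yields $h^1(D-A-B-C)=h^1(N+K)=0$, so by Lemma~\ref{lem:surj} the moves $(W1)$, $(W2)$, $(W3)$ are valid for $D$.

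The hard part will be the nefness bookkeeping — making sure that every exceptional curve $H$ with $(A+B+C)\cdot H\in\{3,4\}$ has been accounted for, since this is where the differing hypotheses of $(W1)$, $(W2)$, $(W3)$ are consumed and where the geometry of conic bundles on $X_7$ (distinct reducible fibres are disjoint, and $Q\cdot H=0$ forces $H$ to be a fibre component) is essential. The bigness computation is by comparison routine once $N$ has been put into the form $A'+B'-C+F$ and $F^2=0$ is invoked.
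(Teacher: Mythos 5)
Your proposal is correct and follows essentially the same route as the paper: reduce via Lemma~\ref{lem:surj} and Kawamata--Viehweg to showing $N=-2K+F-(A+B+C)$ is nef and big, check nefness by cases on $(A+B+C)\cdot H$ using that $A$ and $C$ lie in distinct fibres of the conic bundle, and verify bigness by expanding $N^2=(F+A'+B'-C)^2$ with $F^2=0$ (this is the paper's Lemma~\ref{lem:configs2}). Your case bookkeeping and the resulting bounds ($N^2\geq 4m-3\geq 1$) agree with the paper's configuration-by-configuration computation.
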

\begin{proof} The validity of $(W1),\dots, (W3)$ follows from Lemma~\ref{lem:surj} and Lemma~\ref{lem:configs2}.\end{proof}

\begin{lemma} The graph of exceptional curves $G_7$ is $2$-capturable starting from the subgraph spanned by $A$, $C$ using the moves of Lemma~\ref{lem:moves02}.\end{lemma}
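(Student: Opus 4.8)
The plan is to exhibit an explicit winning sequence for the capture game of Section~\ref{games}, in the spirit of the proofs of Lemmas~\ref{lem:capture} and~\ref{lem:F^2=0}. I would first record the relevant combinatorics of the conic bundle $Q$ with $F=mQ$: the exceptional curves $B$ with $F\cdot B=0$ are precisely the components of the reducible fibres $V+V'=Q$ of the morphism $X_7\to\pp^1$ attached to $Q$; the curves $A$ and $C$ lie in two distinct such fibres, two exceptional curves lying in distinct fibres are disjoint, and $\diam(G_7)=2$.

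The game starts from $S_0=\{A,C\}$ (a pair of non-adjacent vertices, since $A$ and $C$ are disjoint) and runs in three stages. In the first stage, $(W1)$ captures every $F$-contracted curve lying in a fibre distinct from those of $A$ and $C$ (each such curve is disjoint from both $A$ and $C$), and $(W2)$, $(W3)$ capture the curves adjacent to $C$ that fall into their patterns, together with the curves adjacent to both $A$ and $C$; in particular $Q-C$ is captured here, by $(W3)$, since $(Q-C)\cdot C=1$, $A\cdot(Q-C)=0$ and $F\cdot(Q-C)'=-K\cdot F>0$. In the second stage one captures the last $F$-contracted curve $Q-A$ by a $(W1)$-move anchored at $C$ and any third-fibre curve from the first stage (all three curves involved are $F$-contracted, so the nefness/bigness estimate behind the validity of $(W1)$ applies verbatim), and one captures the few neighbours of $C$ missed in the first stage --- the dual curves $A'$, $C'$ and the curves $-K-V$ with $V$ a fibre component meeting $A$ --- by $(W2)$-type moves anchored at $A$ or $C$ together with a curve obtained in the first stage. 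In the third stage each remaining curve $H$ has $F\cdot H\ge 1$ and is not adjacent to $C$; since $\diam(G_7)=2$ it has a common neighbour $K$ with $C$, and $K$ was captured in the first stage, so $H$ can be taken in by a $(W2)$-type move from $K$ and a second already captured neighbour of $H$. Reading off Table~\ref{tab:-1curves1} one checks that the three stages exhaust all $56$ exceptional curves of $X_7$.

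I expect the main obstacle to be the bookkeeping of the second and third stages: for each of the curves not reached directly from $A$ and $C$ by $(W1)$--$(W3)$ one must produce an explicit triangle or path realizing it as the captured vertex of a legal move, and check that both anchors are already available and sit in the prescribed intersection pattern --- here one leans on $\diam(G_7)=2$ and on fact $(4)$ about the sums of curves in triangles of $G_7$. A secondary but essential point is that invoking a move with the role of $A$ played by a previously captured curve is legitimate, because the nefness and bigness estimates underlying Lemma~\ref{lem:configs2} use only that the relevant anchor is $F$-contracted (for $(W1)$) or occupies the prescribed position relative to $C$ (for $(W2)$ and $(W3)$), and not any special feature of the particular curve $A$.
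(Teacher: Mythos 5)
Your game follows the paper's skeleton --- start from the disjoint $F$-contracted pair $\{A,C\}$, sweep up the remaining fibre components of $Q$ with $(W1)$--$(W3)$, then capture the non-contracted curves by $(W2)$-type moves with new anchors --- and most of your bookkeeping checks out: $Q-C$ does fit $(W3)$, your detour through a $(W1)$-move anchored at $C$ and a third-fibre component to get $Q-A$ is legal (though unnecessary, since the hypotheses of Lemma~\ref{lem:configs2} are symmetric in the two anchors, so $(W3)$ with the roles of $A$ and $C$ exchanged does it directly), and $A'$, $C'$, $(Q-A)'$ are indeed singly adjacent to a pair of disjoint fibre components, so genuine $(W2)$-moves reach them. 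The problem is your third stage. A $(W2)$-move is validated in Lemma~\ref{lem:configs2} only when \emph{both} anchors are disjoint exceptional curves contracted by $F$; your anchors are ``a common neighbour $K$ of $H$ and $C$'' and ``a second already captured neighbour of $H$,'' and neither is verified to be $F$-contracted, nor are they verified to be disjoint from each other. Your closing claim --- that for $(W2)$ and $(W3)$ the estimates only need the anchor to ``occupy the prescribed position relative to $C$'' --- is false: the nefness case analysis (a curve $H$ meeting two of the three only has $F\cdot H\geq 1$ because two of them are \emph{contracted} and $Q$ has a unique fibre through a point) and the bigness computation ($F\cdot A'=-K\cdot F$, $(A'+B'-C)^2=-3$, etc.) use $F\cdot A=F\cdot C=0$ and $A\cdot C=0$ in an essential way. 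So the appeal to $\diam(G_7)=2$ buys you nothing: the common neighbour it produces is not, in general, a legal anchor.

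The missing ingredient is Lemma~\ref{lem:Qcontracted}: every exceptional curve $B$ with $F\cdot B>0$ is singly adjacent to a pair of \emph{disjoint} curves contracted by $F$ (one component from each of two distinct reducible fibres of $Q$). Since every fibre component is captured by the end of your second stage, this supplies valid $(W2)$-anchors for every remaining curve in one stroke, with no case analysis and no use of the diameter --- this is exactly how the paper finishes. With that substitution your argument closes; as written, the third stage is a genuine gap.
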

\begin{proof} We will describe the stages of the game explicitly,
\begin{itemize}
\item{Start with any two disjoint exceptional curves $A$ and $C$ contracted by $F$}
\item{Capture all the curves $B$ contracted by $F$ from $A$ and $C$ as follows:
\begin{itemize}
\item{If $F$ is disjoint from $A$ and $C$ use $(W1)$}
\item{If $F$ is adjacent to $A$ and $C$ use $(W2)$}
\item{If $F$ is adjacent to only one of $A$ and $C$ then use $(W3)$ (note that since $B$ is contracted by $F$ then $F\cdot B'=F\cdot(B+B')=-K\cdot F\geq 2$ so the required condition for move $(W3)$ is satisfied).} 
\end{itemize}
} 
\item{Capture all remaining exceptional curves using move $(W2)$. This is possible since by Lemma~\ref{lem:Qcontracted} below every other exceptional curve is adjacent to a pair of disjoint curves contracted by $F$.}
\end{itemize}
\end{proof}
\begin{lemma} \label{lem:X7mDe12} For any Del Pezzo surface $X_7$ and any divisor $D=-K+F$ with $F=mQ$ a (positive) multiple of a conic bundle we have,
\[
b_{1,D}({\rm Cox}(X_7))=0
\] 
\end{lemma}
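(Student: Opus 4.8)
The plan is to invoke Theorem~\ref{thm:games}, exactly as in the proofs of Lemmas~\ref{lem:X7mDg1} and~\ref{lem:X7mDe11}: it suffices to build $G_7$ from a starting subgraph on at most two vertices by capture moves that are valid for $D$. Since $F=mQ$ with $m\geq 1$, the conic bundle $Q$ is contracted by $F$, so taking components of two distinct reducible fibres of $|Q|$ produces disjoint exceptional curves $A,C$ with $F\cdot A=F\cdot C=0$; these two disjoint vertices form the starting subgraph $S_0$, and any cycle supported on $S_0$ is a boundary by Lemma~\ref{lem:easy}. For this $A$ and $C$, Lemma~\ref{lem:moves02} supplies the moves $(W1),(W2),(W3)$ and the $2$-capturability lemma just above shows that $G_7$ is $2$-capturable from $\{A,C\}$ with them, so the statement is immediate from Theorem~\ref{thm:games}. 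The substance of the argument is therefore carried by the two deferred inputs, the validity Lemma~\ref{lem:configs2} and the combinatorial Lemma~\ref{lem:Qcontracted}, so I sketch plans for both.

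For Lemma~\ref{lem:configs2} I would follow the template of Lemmas~\ref{lem:configs1} and~\ref{lem:M2}: show that in each of the three configurations the divisor $N=D-A-B-C-K=-2K+F-(A+B+C)$ is nef and big, and then apply Kawamata--Viehweg together with Lemma~\ref{lem:surj} (whose hypothesis is met since $A,C$ are disjoint). Bigness is immediate from $N^2=-4+2(F\cdot A'+F\cdot B')+F^2+(A+B+C)^2$ (valid because $F\cdot C=0$): here $F^2=0$, $F\cdot A'=F\cdot C'=-K\cdot F\geq 2$, and $(A+B+C)^2$ equals $-3$ in $(W1)$, $-1$ in $(W3)$, $1$ in $(W2)$, so the side conditions ($F\cdot B'=-K\cdot F$ in $(W1)$ since $B$ is contracted by $F$, and $F\cdot B'\geq 1$ imposed in $(W3)$) force $N^2\geq 1$ throughout. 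Nefness is a finite case analysis on $H\cdot(A+B+C)$ for an exceptional curve $H$: if $H\cdot(A+B+C)\leq 2$ there is nothing to prove since $(-2K+F)\cdot H\geq 2$; if it is $3$ or $4$ then either $H\in\{A',B',C'\}$, handled by $F\cdot A'=F\cdot C'=2m$ and by the values of $F\cdot B'$ just mentioned, or $H$ meets $C$ (or $A$) positively, hence meets $Q$ and $F\cdot H\geq m\geq 1$, unless $H$ is the complementary fibre-component of $C$ (or of $A$), in which case one checks directly that $H\cdot(A+B+C)\leq 2$.

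For Lemma~\ref{lem:Qcontracted} I would argue from the conic-bundle geometry of $|Q|$. The exceptional curves contracted by $F$ are exactly the components of the reducible fibres $V_1^{(i)}+V_2^{(i)}=Q$; there are at least two such fibres (an Euler-characteristic count gives six), and $Q^2=0$ forces components of distinct fibres to be disjoint. If $B$ is an exceptional curve with $Q\cdot B\geq 1$ then $B$ is not a component of any fibre (a component $V$ has $Q\cdot V=0$), so from $B\cdot V_1^{(i)}+B\cdot V_2^{(i)}=Q\cdot B\geq 1$ one sees that $B$ meets each fibre in some component with positive multiplicity; choosing such a component in two distinct fibres yields the required pair of disjoint $F$-contracted curves adjacent to $B$, and move $(W2)$ then captures $B$. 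This is exactly what the final stage of the capture game in the $2$-capturability lemma needs.

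The genuine obstacle I anticipate is the nefness half of Lemma~\ref{lem:configs2}. In contrast to the $m_D\geq 2$ case (Lemma~\ref{lem:configs0}), here $F^2=0$ contributes nothing to $N^2$ and $F\cdot H=m\,Q\cdot H$ contributes little to intersection numbers ($Q\cdot H$ is often $0$ or $1$), so the inequalities $N\cdot H\geq 0$ have a thin margin; the curves $H$ with $H\cdot(A+B+C)\in\{3,4\}$ must be pinned down precisely --- essentially $H\in\{A',B',C'\}$ or $H$ a fibre-component through $A$ or $C$ --- and each sub-case checked against $-K\cdot F\geq 2$ and the defining conditions of $(W1)$--$(W3)$. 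Carrying this bookkeeping out uniformly over all Del Pezzo surfaces $X_7$ and all conic bundles $Q$ is the delicate point, but, as the very form of the side conditions in $(W1)$--$(W3)$ suggests, it does go through.
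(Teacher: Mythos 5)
Your proposal is correct and follows essentially the same route as the paper: the lemma itself is deduced from Theorem~\ref{thm:games} together with Lemma~\ref{lem:moves02} and the $2$-capturability lemma, and your sketches of the two deferred inputs (showing $N=-2K+F-(A+B+C)$ is nef and big via a case analysis on $H\cdot(A+B+C)$ and Kawamata--Viehweg, and the fibre-component argument for Lemma~\ref{lem:Qcontracted}) match the paper's proofs, with your $N^2$ computation being an equivalent rearrangement of the one in Lemma~\ref{lem:configs2}.
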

\begin{proof} this follows immediately from Theorem~\ref{thm:games} by the two Lemmas above.
\end{proof}
Now we will prove the validity of the required capture moves
\begin{lemma} \label{lem:configs2} If $A$ and $C$ are disjoint exceptional curves contracted by $F$ and $B$ is an exceptional curve in any of the following configurations, then $h^1(D-A-B-C)=0$.
\begin{itemize}
\item{(W1): $A\cdot B=B\cdot C=0$}
\item{(W2): $A\cdot B=B\cdot C=1$}
\item{(W3): $F\cdot B'\geq 1$, $B\cdot C=1$ and $A\cdot B=0$} 
\end{itemize}
\end{lemma}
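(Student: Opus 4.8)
The plan is to follow exactly the template used in Lemma~\ref{lem:configs1} and Lemma~\ref{lem:M2}: in each configuration set $N=D-A-B-C-K=-2K+F-(A+B+C)$ (using $-K\cdot D\geq 3$ rewritten as $D=-2K+F$, since $D=-K+F$), show $N$ is nef and big, and invoke Kawamata--Viehweg together with Lemma~\ref{lem:surj}. So the whole lemma reduces to two estimates per case: $N\cdot H\geq 0$ for every exceptional curve $H$, and $N^2>0$.

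First I would dispose of nefness by the same case analysis on $H\cdot(A+B+C)\in\{0,1,2,3,4\}$ that appears above. The value $4$ forces $H$ to be doubly adjacent to one of $A,B,C$, hence equal to its dual; since $A$ and $C$ are contracted by $F=mQ$, we get $F\cdot A'=F\cdot C'=-K\cdot F=m\cdot 2\geq 2$ (and in (W2), $F\cdot B'\geq 1$ because $B$ meets $C$ which is contracted by the conic bundle, so $F$ does not contract $B$), giving $N\cdot H\geq 2+2-4=0$; the value $3$ forces $H$ to meet at least one of the three curves it is not doubly adjacent to, and in each configuration this curve is one along which $F$ has positive intersection (this is where (W3)'s hypothesis $F\cdot B'\geq 1$ and the fact that $F$ contracts no curve adjacent to $C$ enter), so $N\cdot H\geq 2+1-3=0$; values $\leq 2$ are immediate from $-2K\cdot H=2$. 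For bigness, as in Lemma~\ref{lem:configs1} one computes
\[
N^2\geq -3+2\bigl(F\cdot A'+F\cdot B'\bigr)+(A+B+C)^2,
\]
except that here, because $F=mQ$ \emph{is} a multiple of a conic bundle, the inequality $F^2\geq 1$ used there is replaced by $F^2=0$, so I would recompute and expect
\[
N^2 = -4 + 2\bigl(F\cdot A'+F\cdot B'\bigr)+(A+B+C)^2 .
\]
In (W1), $A,B,C$ are pairwise disjoint so $(A+B+C)^2=-3$, and $F\cdot A'=F\cdot B'=F\cdot C'=2m\geq 2$, giving $N^2\geq -4+8-3=1>0$. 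In (W2), $(A+B+C)^2=-1$ and $F\cdot A'=F\cdot C'=2m$, $F\cdot B'\geq 1$, so $N^2\geq -4+2(2m+1)-1=4m-3>0$. In (W3), $(A+B+C)^2=-1$ (since $A\cdot B=0$, $B\cdot C=1$, $A\cdot C=0$), $F\cdot A'=2m\geq 2$ and $F\cdot B'\geq 1$ by hypothesis, so $N^2\geq -4+2(2m+1)-1=4m-3>0$.

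The main obstacle I anticipate is the $H\cdot(A+B+C)=3$ case, specifically ruling out that some exceptional curve $H$ meets all of $A,B,C$ simple-tangentially with $F\cdot H=0$: this requires knowing that in every one of the three configurations the triangle $\{A,B,C\}$ (or the near-triangle) cannot be completed by an $F$-contracted curve other than the obvious ones, which uses the combinatorics of $G_7$ (no triangles on a double edge, and the structure of conic bundles from Lemma~\ref{lem:F^2=0}). Once that bookkeeping is pinned down the rest is the routine intersection arithmetic above, and $h^1(D-A-B-C)=h^1(N+K)=0$ follows from Kawamata--Viehweg.
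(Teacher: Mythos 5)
Your overall route is exactly the paper's: set $N=D-A-B-C-K=-2K+F-(A+B+C)$, prove $N$ nef and big, and conclude by Kawamata--Viehweg plus Lemma~\ref{lem:surj}; your identity $N^2=-4+2(F\cdot A'+F\cdot B')+(A+B+C)^2$ is a correct (and equivalent) rewriting of the paper's $N^2=2F\cdot(A'+B')+(A'+B'-C)^2$. However, there are two genuine problems. First, the step you yourself flag as ``the main obstacle'' --- the case $H\cdot(A+B+C)=3$ of nefness --- is left open, and the heuristic you offer for closing it (``$F$ contracts no curve adjacent to $C$'') is false: writing $F=mQ$, the curve $Q-C$ is contracted by $F$ and is adjacent to $C$. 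The correct argument, which is the actual content of this case, is that if $H$ is contracted by $F$ and adjacent to \emph{two disjoint} contracted curves $A$ and $C$, then $A+H$ and $C+H$ are both conic bundles orthogonal to $Q$, hence both equal to $Q$, forcing $A=C$; so any $H$ meeting two contracted curves satisfies $F\cdot H\geq 1$, and the remaining case-$3$ possibilities are $H$ dual to a contracted curve (where $F\cdot H=-K\cdot F=2m\geq 2$) or, in (W3), $H=B'$ (covered by hypothesis).

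Second, your bigness estimate in (W2) rests on two incorrect inputs that happen to cancel. With $A\cdot B=B\cdot C=1$ and $A\cdot C=0$ one has $(A+B+C)^2=-3+2(1+1+0)=+1$, not $-1$. And $F\cdot B'\geq 1$ is not guaranteed in (W2): for $Q=L-E_1$, $A=E_2$, $C=E_3$ and $B=2L-E_2-E_3-E_5-E_6-E_7$ one gets $Q\cdot B=2$, hence $F\cdot B'=m(2-Q\cdot B)=0$ (your justification conflates ``$F$ does not contract $B$'', i.e.\ $F\cdot B\geq 1$, with $F\cdot B'\geq 1$). The paper's estimate for (W2) uses only $F\cdot B'\geq 0$ together with the correct self-intersection, giving $N^2\geq -4+2(2m)+1=4m-3>0$; your two errors of $+2$ and $-2$ cancel to the same number, but the derivation as written is not sound. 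Configurations (W1) and (W3) are handled correctly.
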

\begin{proof} We will show that the divisor $N=D-A-B-C-K=-2K+F-(A+B+C)$ is nef and big. The result will then follow from the Kawamata-Viehweg vanishing theorem.
\begin{itemize}
\item{If $H\cdot(A+B+C)\leq 2$ then $N\cdot H\geq 0$}
\item{If $H\cdot(A+B+C)=3$ then  either $H$ intersects two contracted curves (so $F\cdot H\geq 1$ since $Q$ contracts only one conic bundle) or $H$ is dual to a contracted curve (and $F\cdot H\geq 2$) or we are in move $(W3)$ and $H=B'$ (so by assumption $F\cdot H\geq 1$). In all cases $N\cdot H\geq 2+1-3= 0$}
\item{If $H\cdot(A+B+C)=4$ then $H$ is dual to some contracted curve so $F\cdot H=F\cdot (H+H')=-K\cdot F\geq 2$ so $N\cdot H\geq 2+2-4=0$}
\end{itemize}
Thus $N$ is a nef divisor. To verify bigness we will compute $N^2$
\[
N^2=(F+(-K-A)+(-K-B)-C)^2=(F+A'+B'-C)^2=
\]
\[
=2F(A'+B')+(A'+B'-C)^2
\]
Now we will study the last quantity in each of the configurations.
\begin{itemize}
\item{In $(W1)$, $2F(A'+B')=-4KF\geq 8$ and $(A'+B'-C)^2=-7$}
\item{In $(W2)$, $2F(A'+B')=-2KF+2FB'\geq 4$ and $(A'+B'-C)^2=-3$}
\item{In $(W3)$, $2F(A'+B')=-2KF+2FB'\geq 6$ (since by assumption $F\cdot B'\geq 1$) and $(A'+B'-C)^2=-5$}
\end{itemize}
In all cases $N^2\geq 1>0$.
\end{proof}
\begin{lemma} \label{lem:Qcontracted} For every exceptional curve $B$ not contracted by $F$ there exists a pair of disjoint exceptional curves $A$ and $C$ contracted by $F$ adjacent to $B$.\end{lemma}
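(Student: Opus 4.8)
The plan is to analyse the reducible fibres of the conic bundle $Q$ combinatorially (recall $F=mQ$ with $m>0$, so an exceptional curve $V$ is contracted by $F$ exactly when $Q\cdot V=0$). If $Q\cdot V=0$ then $Q-V$ has self-intersection $-1$ and anticanonical degree $1$, hence is again the class of an exceptional curve $V'$, with $V+V'=Q$ and $V\cdot V'=1$; thus the curves contracted by $F$ split into ``reducible fibres'' $\{V,V'\}$. Two facts will do the work. First, components of two distinct reducible fibres are disjoint: if $\{V_1,V_2\}$ and $\{W_1,W_2\}$ are distinct, then $V_1$ is neither $W_1$ nor $W_2$ (otherwise the fibres coincide), and $0=Q\cdot V_1=W_1\cdot V_1+W_2\cdot V_1$ with both summands nonnegative forces $W_i\cdot V_1=0$. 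Second, for any exceptional curve $B$ and any reducible fibre $\{V_1,V_2\}$ one has $B\cdot V_1+B\cdot V_2=Q\cdot B$. Finally, a short check — most transparently in the model $Q=L-E_1$, which is harmless because the Weyl group acts transitively on conic bundle classes and by isometries permuting the exceptional curves — shows that $Q$ has exactly six reducible fibres, namely $\{e_j,f_{1j}\}$ for $j=2,\dots,7$.

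Now let $B$ be an exceptional curve not contracted by $F$, so $Q\cdot B\geq 1$. Since $B'=-K-B$ is an exceptional curve and hence effective, and $Q$ is nef, $0\le Q\cdot B'=-K\cdot Q-Q\cdot B=2-Q\cdot B$, so $Q\cdot B\in\{1,2\}$. Call a reducible fibre \emph{good} if $B$ meets one of its two components with multiplicity exactly $1$. If $Q\cdot B=1$, every fibre has component intersections $1$ and $0$ with $B$, so every fibre is good. If $Q\cdot B=2$, a fibre is good unless one of its components meets $B$ with multiplicity $2$; but two exceptional curves of $X_7$ meet with multiplicity $\geq 2$ only when they are dual, so such a component must equal $B'$, and since $B'$ lies in at most one reducible fibre, at most one fibre fails to be good. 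In either case at least five of the six reducible fibres are good.

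Choosing a multiplicity-one component $A$ of one good fibre and a multiplicity-one component $C$ of another, we obtain exceptional curves with $F\cdot A=F\cdot C=0$ (they are fibre components), with $A\cdot B=C\cdot B=1$, and with $A\cdot C=0$ by the first fact above; this is the required disjoint pair adjacent to $B$, and the multiplicity-one property is exactly what move $(W2)$ needs. I expect the only real obstacle to be the elementary but somewhat tedious bookkeeping: verifying that distinct reducible fibres have disjoint components, checking that $Q$ has exactly six of them, and confirming that the reduction to the model $Q=L-E_1$ via the Weyl group is legitimate (it is, since that group acts transitively on conic bundle classes and by isometries of $\Pic(X_7)$ carrying exceptional curves to exceptional curves). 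Everything else is immediate from the description of the exceptional curves of $X_7$ already recorded.
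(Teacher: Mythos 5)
Your proof is correct and follows essentially the same route as the paper's: decompose the conic bundle $Q$ into its reducible fibres, observe that components of distinct fibres are mutually orthogonal, and use $Q\cdot B>0$ to extract fibre components meeting $B$ singly. You are in fact slightly more careful than the paper at one point --- the paper takes two \emph{arbitrary} distinct fibres $\{W_1,W_2\}$, $\{V_1,V_2\}$ and asserts that each contains a component meeting $B$ singly, which can fail when $B$ is dual to a fibre component (e.g.\ $B=-K-W_1$ gives $B\cdot W_1=2$ and $B\cdot W_2=0$); your observation that at most one of the six reducible fibres can be ``bad'' closes exactly that gap.
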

\begin{proof} Write $F=mQ$ for some conic bundle $Q$. Let $\{W_1,W_2\}$ and $\{V_1,V_2\}$ be distinct sets of curves such that $Q=W_1+W_2=V_1+V_2$ and note that the $W's$ and the $V's$ are orthogonal (since $Q^2=0$). Now $Q\cdot B>0$ implies that at least one $W_i$ (say $W_1$) and one $V_i$ (say $V_1$) intersect $B$ (singly). Since $Q\cdot W_1\leq Q\cdot(W_1+W_2)=0$ and similarly $Q\cdot V_1=0$ the curves $W_1$ and $V_1$ are contracted by $Q$ and the result follows.\end{proof}

\section{A proof of the conjecture of Batyrev and Popov}
In this final section we put all our results together and prove the conjecture of Batyrev and Popov,
\begin{theorem} For $4\leq r\leq 7$ and any choice of points $\{p_1,\dots, p_r\}$ in $\mathbb{P}^2$ such that $X_r(p_1,\dots, p_r)$ is a Del Pezzo surface, 
\[
{\rm Cox}(X_r)=k[V_r]/Q_r(p_1,\dots, p_r)
\]
In other words the Cox rings of Del Pezzo surfaces are quadratic algebras.\end{theorem}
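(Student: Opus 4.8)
The plan is to derive the identity $\Cox(X_r)=k[V_r]/Q_r$ from the single cohomological statement that $b_{1,D}(\Cox(X_r))=0$ for every $D\in\Pic(X_r)$ with $-K\cdot D\ge 3$, and to prove that statement by induction on $r$, routing each relevant divisor into exactly one of the case-lemmas established above.

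First I would set up the reduction. There is a canonical surjection of $\Pic(X_r)$-graded $k$-algebras $k[V_r]/Q_r\twoheadrightarrow k[V_r]/I_r=\Cox(X_r)$, and since $Q_r\subseteq I_r$ by definition it suffices to show that $I_r$ is generated in anticanonical degree $2$; as $I_r$ is generated by its minimal generators, this means showing that $b_{1,D}(\Cox(X_r))\ne 0$ forces $-K\cdot D=2$. By Lemma~\ref{lem:mingens} such a $D$ is nef and effective, so $-K\cdot D\ge 0$; the case $-K\cdot D=0$ forces $D=0$, where $k[V_r]\to\Cox(X_r)$ is an isomorphism; and if $-K\cdot D=1$ then every degree-$D$ monomial in $k[V_r]$ is a single variable (each variable has anticanonical degree $1$), so $(k[V_r])_D$ is at most one-dimensional and maps injectively into $\Cox(X_r)_D=H^0(X_r,D)$, again leaving no relations. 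So everything reduces to the vanishing $b_{1,D}=0$ for $-K\cdot D\ge 3$.

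For the induction, the base case $r=4$ is the computation $I_4=Q_4$ carried out in the Example above (the Pl\"ucker ideal of $Gr(2,5)$). For $5\le r\le 7$, fix $D$ with $-K\cdot D\ge 3$; by Lemma~\ref{lem:mingens} we may take $D$ nef, hence effective by Lemma~\ref{lem:nefandeffective}. If $D$ contracts some exceptional curve, Lemma~\ref{lem:DcontractsCurves} gives $b_{1,D}(\Cox(X_r))=0$ --- and it is precisely here that the inductive hypothesis $\Cox(X_{r-1})=k[V_{r-1}]/Q_{r-1}$ enters. Otherwise $m_D\ge 1$: for $r=5,6$ apply Lemma~\ref{lem:X6mD0}; for $r=7$ with $m_D\ge 2$ apply Lemma~\ref{lem:X7mDg1}; and for $r=7$ with $m_D=1$ note $D\ne -K$ (since $-K\cdot(-K)=K^2=2<3$), so $F:=D+K$ is nonzero, is nef because $m_D=1$ gives $D\cdot E\ge 1$, hence $F\cdot E\ge 0$, for every exceptional curve $E$, and is effective by Lemma~\ref{lem:nefandeffective}; the dichotomy $F^2=0$ versus $F^2>0$ of Lemma~\ref{lem:F^2=0} then splits this into $F$ a positive multiple of a conic bundle (Lemma~\ref{lem:X7mDe12}) and $F$ not such a multiple (Lemma~\ref{lem:X7mDe11}). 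These alternatives are exhaustive, so $b_{1,D}(\Cox(X_r))=0$ for all $D$ with $-K\cdot D\ge 3$, whence every minimal generator of $I_r$ lies in $(I_r)_2$ and $I_r=Q_r$.

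The substantive work all sits inside the case-lemmas (which in turn rest on Theorem~\ref{thm:games}, Lemma~\ref{lem:surj}, and Kawamata--Viehweg vanishing); in the assembly itself the only point needing care is that the $r=7$, $m_D=1$ split is genuinely complete --- that $F=D+K$ really is nef and effective so that the conic-bundle dichotomy of Lemma~\ref{lem:F^2=0} applies, and that the one divisor it would exclude, $D=-K$, sits in anticanonical degree $2$ and is therefore harmless. I expect this bookkeeping, together with the verification that no minimal generator can occur in anticanonical degree $\le 1$, to be the main (and fairly minor) obstacle.
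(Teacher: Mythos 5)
Your proposal is correct and follows essentially the same route as the paper: induction on $r$ with base case $r=4$, Lemma~\ref{lem:mingens} to restrict to nef and effective degrees, Lemma~\ref{lem:DcontractsCurves} (where the inductive hypothesis enters) for divisors contracting a curve, and the case-lemmas \ref{lem:X6mD0}, \ref{lem:X7mDg1}, \ref{lem:X7mDe11}, \ref{lem:X7mDe12} for the rest, with the same $m_D$ and conic-bundle dichotomies. Your explicit verification that anticanonical degrees $0$ and $1$ carry no minimal generators is a small piece of bookkeeping the paper leaves implicit, but it does not change the argument.
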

\begin{proof} Use induction on $r$. The base case $r=4$ was settled in~\cite{BP}. Assume that $r>4$ and note that
\begin{enumerate}
\item{By Lemma~\ref{lem:mingens} the minimal generators of the ideal defining the Cox ring have degrees $D$ which are nef and effective.}
\item{By Lemma~\ref{lem:DcontractsCurves}, $b_{1,D}({\rm Cox}(X_r))=0$ for every nef and effective divisor of degree $\geq 2$ which contracts exceptional curves (the induction hypothesis is used in the proof of this Lemma).}
\item{If $D$ is any nef divisor of degree $-K\cdot D\geq 3$ which does not contract curves then:
\begin{enumerate}
\item{If $r=5,6$ then Lemma~\ref{lem:X6mD0} shows that $b_{1,D}({\rm Cox}(X_r))=0$.}
\item{If $r=7$ then Lemmas~\ref{lem:X7mDg1}, ~\ref{lem:X7mDe11} and~\ref{lem:X7mDe12} exhaust all possibilities and show that $b_{1,D}({\rm Cox}(X_r))=0$.}
\end{enumerate}
} 
\end{enumerate}
Hence the ideal $I_r(p_1,\dots, p_r)$ has all its minimal generators in anticanonical degree $2$ and the Theorem follows.
\end{proof}

\end{document}